\newtheorem{theorem}{Theorem}
\newcounter{countp}
\newtheorem*{SqRtheorem*}{Squash Rigidity Theorem}
\newtheorem*{StRtheorem*}{Stadium Rigidity Theorem}
\newtheorem{mtheorem}{Theorem}[section]
\newtheorem{lemma}[mtheorem]{Lemma}
\newtheorem{proposition}[mtheorem]{Proposition}
\newtheorem{remark}[mtheorem]{Remark}
\def\beq{\begin{equation}}
\def\eeq{\end{equation}}
\def\beqn{\begin{equation*}}
\def\eeqn{\end{equation*}}
\def\diam{{\rm diam}}
\def\bi{\mathbf{i}}
\def\bn{\mathbf{n}}
\def\cA{\mathcal{A}}
\def\cB{\mathcal{B}}
\def\cC{\mathcal{C}}
\def\cD{\mathcal{D}}
\def\cE{\mathcal{E}}
\def\cI{\mathcal{I}}
\def\cJ{\mathcal{J}}
\def\cK{\mathcal{K}}
\def\cL{\mathcal{L}}
\def\cM{\mathcal{M}}
\def\cN{\mathcal{N}}
\def\cO{\mathcal{O}}
\def\cQ{\mathcal{Q}}
\def\cR{\mathcal{R}}
\def\cS{\mathcal{S}}
\def\cT{\mathcal{T}}
\def\IN{{\mathbb N}}
\def\IQ{{\mathbb Q}}
\def\IR{{\mathbb R}}
\def\IZ{{\mathbb Z}}
\def\hOmega{\hat{\Omega}}
\newcommand{\eps}{\varepsilon}
\def\p{\partial}
\def \ML{\CMcal{ML}}
\def \CcV{\CMcal{V}}
\def \CcC{\CMcal{C}}
\def \hOmega{\widehat{\Omega}}
\def \hgamma{\widehat{\gamma}}
\def \hs{\hat{s}}
\def \ogamma{\overline{\gamma}}
\def \os{\overline{s}}
\def \ot{\overline{t}}
\def \ovarphi{\overline{\varphi}}
\def \hA{\widehat{A}}
\def \hB{\widehat{B}}
\def \hC{\widehat{C}}
\def \hx{\widehat{x}}
\def \hy{\widehat{y}}
\def \hl{\hat{l}}
\def \hcQ{\widehat{\cQ}}
\def \htt{\hat{t}}
\def \hC{\widehat{C}}
\numberwithin{equation}{section}
\begin{document}

\title{Length Spectrum Rigidity for Piecewise Analytic Bunimovich Billiards}

\author[1]{Jianyu Chen\thanks{jychen@suda.edu.cn}}
\author[2]{Vadim Kaloshin\thanks{vadim.kaloshin@gmail.com}}
\author[3]{Hong-Kun Zhang\thanks{hongkun@math.umass.edu}}
\affil[1]{\footnotesize School of Mathematical Sciences
\& Center for Dynamical Systems and Differential Equations, Soochow University, China}
\affil[2]{\footnotesize IST Austria}
\affil[3]{\footnotesize Department of Mathematics \& Statistics,  University of Massachusetts Amherst, USA}

\date{}

\maketitle

\begin{abstract}
In the paper, we establish Squash Rigidity Theorem - 
the dynamical spectral rigidity for
piecewise analytic Bunimovich squash-type stadia 
whose convex arcs are homothetic.
We also establish Stadium Rigidity Theorem - 
the dynamical spectral rigidity for
piecewise analytic Bunimovich stadia whose flat boundaries are 
a priori fixed. 
 
In addition, for smooth Bunimovich  squash-type stadia
we compute the Lyapunov exponents along the maximal period
two orbit, as well as the value of the Peierls' Barrier function from
the maximal marked length spectrum associated
to the rotation number $\frac{2n}{4n+1}$.
\end{abstract}
\tableofcontents
\section{Introduction}

\subsection{Background and Notations}

{
A natural question is to understand what information on
the geometry of the billiard table is encoded by {\it the length spectrum},
i.e., the set of lengths of periodic orbits.
Motivated by the famous question of M. Kac \cite{Kac66}:
``Can one hear the shape of a drum?'', which is formally called \emph{Laplace inverse spectral problem}, 
we propose the following question from the perspective of billiard dynamics:
is the knowledge of length spectrum sufficient
to reconstruct the shape of the billiard table and hence the whole dynamics?
We refer to this problem as the \emph{dynamical inverse spectral problem}.

Both inverse spectral problems turn out to be extremely challenging,
and only little progress have been achieved for some classes of convex billiards.
On the one hand, the celebrated work 
\cite{Zel00, Zel04, Zel09} by Zelditch shows that the Laplace inverse spectral problem has a positive answer 
in the case on a generic class of analytic $\mathbb{Z}_2$-symmetric planar strictly convex domains.
Hezari-Zelditch \cite{HeZel10} have obtained a higher dimensional analogue of this result. 
Very recently, Hezari-Zelditch~\cite{HeZel19} showed that 
nearly circular ellipses are spectrally determined among all smooth domains,
without assuming  any symmetry, convexity, or closeness to the ellipse, on the class of domains.
On the other hand, Colin de Verdi\`ere \cite{Colin84} had shown that the marked length spectrum 
determines completely the geometry of convex analytic billiards which have the symmetries of an ellipse. 
In the non-analytic situation, 
De Simoi, Kaloshin and Wei \cite{dSKW17} have proven the length spectral rigidity, i.e., in the class of sufficiently smooth 
$\mathbb{Z}_2$-symmetric strictly convex table sufficiently close to a circle all deformations preserving 
the length spectrum are isometries. However, there are a number of counter-examples to the inverse 
spectral problem (see. e.g. \cite{GWW92,Sun85}), while the billiard domains in these examples are neither smooth 
nor strictly convex. To this end, great interest has been raised to see if the inverse spectral  problem 
holds for a certain family of non-smooth non-convex billiards.

In this paper, we shall consider a  class of Bunimovich billiard tables, which are not smooth at several boundary points.
Moreover, these tables are  not strictly convex. We would like to stress the dynamics on the Bunimovich billiards 
is significantly different from the elliptic dynamics on strictly convex billiards, that is, the billiard ball motion in 
Bunimovich tables exhibits hyperbolic behavior, accompanied with  strong chaotic behavior and also have singularities. 
Here, we are able to obtain the spectral rigidity results for the first class of  hyperbolic billiards with singularities. It is 
worth mentioning some recent results in \cite{BdKL18, dSKL19}, where marked length spectral rigidity is shown for 
some open sets of hyperbolic billiards, whose dynamics are uniformly hyperbolic and can be coded by subshifts of 
finite type. Nevertheless, the Bunimovich billiards are non-uniformly hyperbolic and cannot be conjugate to symbolic 
systems on finite alphabet set. It was recently shown in \cite{CWZ19} the induced systems of Bunimovich
billiards are conjugate to a positive recurrent countable Markov shift with respect to the SRB measure. Hence the methods in
\cite{BdKL18, dSKL19} cannot be directly applied for our setting. 

In what follows, we describe the class of Bunimovich billiards and their dynamics.}
More precisely,
we investigate two classes of billiards tables satisfying the following assumptions: \\

\noindent \textbf{Assumption I}
\begin{itemize} 
\item[($\mathrm{I_s}$)] {\it A Bunimovich stadium} $\Omega$ is a domain
whose boundary $\p\Omega$ is made of two $C^3$ strictly
convex arcs $\Gamma_1$ and $\Gamma_2$, as well as two
flat parallel boundaries $\Gamma_3$ and $\Gamma_4$,
which are two opposite sides of a rectangle
(see Fig.~\ref{fig:stadia}, left).

\item[($\mathrm{I_{ss}}$)] {\it A Bunimovich squash-type stadium} $\Omega$ is
a domain whose boundary $\p\Omega$ is made of two $C^3$
strictly convex arcs $\Gamma_1$ and $\Gamma_2$, as well as
two flat boundaries $\Gamma_3$ and $\Gamma_4$, which may
not be parallel (see Fig.~\ref{fig:stadia}, middle).
\end{itemize}

\noindent \textbf{Assumption II}

In both cases ($\mathrm{I_s}$) and ($\mathrm{I_{ss}}$), we require that $\p\Omega$ is $C^1$ but not
$C^2$ smooth at each gluing point $\Gamma_i\cap \Gamma_j$,
where $i=1, 2$ and $j=3, 4$. \\% Moreover, we assume the convex curves $\Gamma_1$ and $\Gamma_2$ are symmetric about its center.\\

\noindent \textbf{Assumption III}

\begin{itemize} 
\item[($\mathrm{III_s}$)]
A Bunimovich stadium $\Omega$
satisfies {\it the defocusing mechanism},\footnote{
In fact, for all the results in this paper, we only need (1) the uniqueness of maximal period two orbit
and the shadowing orbits; (2) these
orbits are hyperbolic. The defocusing mechanism is just a sufficient condition, which is quite strong
but somehow easy to check using elementary geometry.
} i.e.,
for any $P_1\in \Gamma_1$ and $P_2\in \Gamma_2$,
\beq\label{def defocusing}
\left|\overline{P_1P_2}\right|>\max\left\{\left|\overline{P_1Q_1}\right|, \ \left|\overline{P_2Q_2}\right| \right\},
\eeq
where $Q_i$ is the other intersection point between the line passing through
$ P_1, P_2$ and the osculating circle of $\Gamma_i$ at $P_i$, $i=1, 2$ (see Fig.~\ref{fig:stadia}, left).

\item[($\mathrm{III_{ss}}$)]
For a Bunimovich squash-type stadiam $\Omega$,
let $\widetilde\Omega$ be the double cover table by
attaching a symmetric copy to $\Omega$ along $\Gamma_3$ or $\Gamma_4$,
and let $\widetilde\Gamma_1$ and $\widetilde\Gamma_2$ be the
two new arcs of $\widetilde\Omega$.
A slightly stronger condition is required for a Bunimovich squash-type stadium $\Omega$:
it satisfies {\it the doubly defocusing mechanism}, that is,
\eqref{def defocusing} holds for any $P_1\in \Gamma_1\cup \widetilde{\Gamma}_1$
and $P_2\in \Gamma_2\cup \widetilde\Gamma_2$ (see Fig.~\ref{fig:stadia}, middle and right).

\end{itemize}

\begin{figure}[h]
\begin{center}
\includegraphics[width=\textwidth]{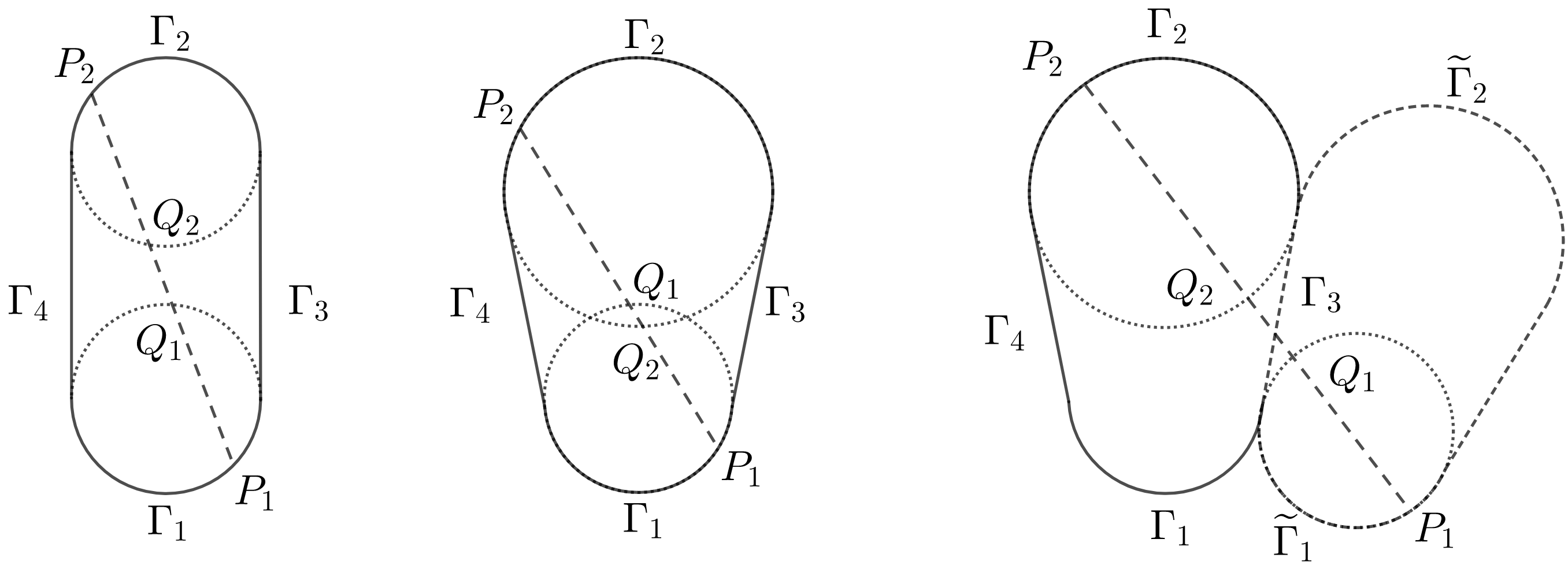}
\end{center}
\caption{Bunimovich (squash-type) stadia and the (doubly) defocusing}
\label{fig:stadia}
\end{figure}

Note that the class of the Bunimovich (squash-type) stadia is
a generalization of the standard Bunimovich (squash) stadia, which is
formed by circular arcs $\Gamma_1$ and $\Gamma_2$.
We remark that the mechanism of (doubly) defocusing is robust
for Bunimovich (squash-type) stadia
under $C^3$ perturbations of $\Gamma_1$ and $\Gamma_2$.

It will be shown in Section~\ref{sec: period two}
that under Assumptions  ($\mathrm{I}$)-($\mathrm{III}$), 
any Bunimovich (squash-type) stadium $\Omega$ possesses a 
unique maximal period two orbit $\gamma^*=\overline{AB}$,
where $A\in \Gamma_1$ and $B\in \Gamma_2$.
The following condition is further assumed for the Bunimovich squash-type stadia.
\\

\noindent \textbf{Assumption IV}

Let $\chi$ be a positive constant.
A Bunimovich squash-type stadium $\Omega$ is said to be \emph{$\chi$-homethetic} 
near the period two orbit $\gamma^*=\overline{AB}$
if there exists an orientation preserving 
linear transformation $\cS: \IR^2\to \IR^2$ 
such that 
\begin{itemize}
\item[(1)] $\cS$ is a homothety with ratio $\chi$, i.e.,
\beqn
\left|\cS(v_1) - \cS(v_2)\right|=\chi |v_1-v_2| \ \ \text{for any}\ v_1, v_2\in \IR^2.
\eeqn
\item[(2)]
$\cS(A)=B$ and $\cS\left(\widehat{\Gamma}_1\right)= \widehat{\Gamma}_2$,
where  
$\widehat{\Gamma}_1$ is a sub-curve of $\Gamma_1$ containing $A$
and 
$\widehat{\Gamma}_2$ is a sub-curve of $\Gamma_2$ containing $B$.
\end{itemize}

We stress that Assumption (IV) will only be imposed for the Bunimovich squash-type stadia.
We shall call the constant $\chi$ the homothety ratio. 
Note that if $\chi=1$, then the two arcs $\Gamma_1$ and $\Gamma_2$ are locally isometric
near $\gamma^*=\overline{AB}$. 

\bigskip

To describe the billiard dynamics on the table $\Omega$,
we assume the billiard ball moves at a unit speed,
and the boundary $\partial \Omega$ is oriented in
the counter-clockwise direction. 
Set $R=\left| \partial \Omega \right|$. 
The phase space is a cylinder given by
\beq\label{phase space}
M:=\left\{(r,\varphi)\,:\,  r\in [0, R]/\{0\sim R\},
\ \varphi\in[-\pi/2,\pi/2] \right\},
\eeq
where $r$ is an arclength parameter of $\p\Omega$ and $\varphi$
is the angle formed by the collision vector and the inward normal
vector of the boundary. We denote by $\tau(z, z_1)$ the length of
the free path of a billiard trajectory connecting $z=(r,\varphi)$
and $z_1=(r_1, \varphi_1)$ in $M$, and we also denote by
$$
F:M\to M, \qquad F:(r,\varphi)\to (r_1,\varphi_1)
$$ the associated billiard map.
\medskip

Given any $q$-periodic billiard orbit
$\gamma=\overline{z_1 z_2 \dots z_q} $, i.e.,
$Fz_i=z_{i+1}$ for $1\le i\le q-1$ and $Fz_q=z_1$,
we set $z_k=z_i$ if $k\equiv i \pmod q$.
The total length for the periodic orbit $\gamma$ is given by
\beq\label{generating}
L(\gamma):=\sum_{i=1}^q \tau (z_i, z_{i+1}).
\eeq
The winding number $p$ of a $q$-periodic orbit $\gamma$
measures how many times the orbit $\gamma$ goes around
$\p \Omega$ along the counter-clockwise direction until it
comes back to the starting point. The rotation number of
a $q$-periodic orbit $\gamma$ is given by $\rho(\gamma):=p/q$,
where $p\ge 1$ is the winding number\ of $\gamma$.
Due to time reversibility, we study only  $p/q\in \IQ\cap (0,1/2]$.
We denote the set of periodic orbits
of rotation number $p/q$, by $\Gamma_{p/q}$.
\medskip

We introduce {\it  the length spectrum} of a billiard table $\Omega$ as the set
of lengths of all periodic orbits, counted with multiplicity:
\beqn
\cL(\Omega):=\IN\cdot \{L(\gamma)\,|\, \gamma \text{ is a periodic billiard orbit }\}\
\cup \ \IN\cdot \{|\partial \Omega|\}.
\eeqn
One difficulty working with the length spectrum $\cL(\Omega)$
is that its (length) elements have no labels, e.g. rotation numbers of
the associate periodic orbits. One possibility is to consider the so-called
{\it maximal marked length spectrum} as in \cite{GuiKaz80} (see also
\cite{Sib04} and \cite{PS92}),
\begin{comment}
{\it the appropriate
reference is  V. Guillemin and D. Kazhdan. Some inverse spectral results
for negatively curved 2-manifolds. Topology, 19(3):301\UTF{2013}312, 1980
K. Siburg, The principle of the least action in Geometry and Dynamics},
\end{comment}
by associating to each length the corresponding rotation number.
More precisely, we consider a map
$
\ML_{\Omega}^{\max}: \mathbb{Q}\cap(0,1/2]\to\mathbb{R}^+
$
such that for any $p/q\in\mathbb{Q}\cap(0, 1/2]$ in lowest terms,
\beq \label{eq:MLS}
\ML_{\Omega}^{\max}(p/q)=\max\{L(\gamma)|\, \, \gamma\in\Gamma_{p/q}\}.
\eeq
We say that a periodic orbit $\gamma$ is {\it maximal} of rotation number $p/q$
if $\rho(\gamma)=p/q$ and $L(\gamma)=\ML^{\max}_{\Omega}(p/q)$.

\subsection{Motivation and the Main Results}

A natural question is

\begin{center}
{\it If two Bunimovich (squash-type) stadia have the same (marked)
length spectra, are these two tables isometric?}
\end{center}

In the case of geodesic flows on hyperbolic surfaces
(Riemannian surfaces of negative curvature) the affirmative
answer was obtained independently by Otal \cite{Otal90} and
Croke \cite{Cr90}. 
Later on,
Croke-Sharafutdinov~\cite{CrSh98} proved the Laplace spectral rigidity of compact negatively curved manifolds, and later Guillarmou-Lefeuvre~\cite{GuLe19} proved a local version of the marked length spectral rigidity for Anosov manifolds. 
It is well-known that geodesic
flows on hyperbolic surfaces is uniformly hyperbolic and, as
the result, has {\it strong chaotic properties,} e.g. the number
of periodic orbits of period up to $T$ growth exponentially
with $T$. Bunimovich squash-type stadia also represent
billiards with {\it strongly chaotic properties} and is an analog
of geodesic flows on hyperbolic surfaces. In this paper we obtain
the geometric information about the underlying stadium from its (Marked)
Length Spectrum. In particular, our results only depend on the length spectrum of periodic orbits near a period two orbit, see below for details.

\subsubsection{Dynamical spectral rigidity for piecewise
analytic domains}

Our first main result concerns the dynamical length spectrum
rigidity for the Bunimovich (squash-type) stadia. \medskip

Let $\cM$ be a space of domains and
$\left\{\Omega_\mu\right\}_{|\mu|\le 1}$ be a $C^1$ one-parameter
family in $\cM$. The family $\left\{\Omega_\mu\right\}_{|\mu|\le 1}$
is called {\it dynamically isospectral} if the length spectra are
identical for each $\mu$, i.e., $\cL(\Omega_{0})=\cL(\Omega_{\mu})$
for any $\mu\in [-1, 1]$. A domain $\Omega\in \cM$ is
{\it dynamically spectrally rigid} in $\cM$ if for any $C^1$
one-parameter family $\left\{\Omega_\mu\right\}_{|\mu|\le 1}$ in
$\cM$ with $\Omega_0=\Omega$
we have
\beqn
\left\{\Omega_\mu\right\}_{|\mu|\le 1}\ \text{ is \ dynamically\ isospectral }
\Longrightarrow \ \Omega_\mu\cong \Omega \ \text{for any}\ \mu \in [-1, 1].
\eeqn
Here $\Omega_\mu\cong \Omega$ means that $\left\{\Omega_\mu\right\}_{|\mu|\le 1}$
is an isometric family, i.e., there exists a family $\left\{\cT_\mu\right\}_{|\mu|\le 1}$ of planar isometries
such that $\Omega_\mu= \cT_\mu\Omega$. 
We also say that  $\left\{\Omega_\mu\right\}_{|\mu|\le 1}$ is a constant family 
if $\Omega_\mu=  \Omega$ for all $\mu \in [-1, 1]$. \\

\medskip

Let $\cM^\omega_{ss}$ (resp. $\cM^m_{ss}$ with $m\ge 3$) be the space of Bunimovich
squash-type stadia $\Omega$ such that $\Omega$ satisfies Assumptions $\mathrm{( I_{ss})(II)(III_{ss})}$
and the convex arcs $\Gamma_1$ and $\Gamma_2$ are analytic (resp. $C^m$ smooth) curves.
Moreover, given $\chi>0$, 
we let $\cM^\omega_{ss}(\chi)$ (resp. $\cM^m_{ss}(\chi)$ with $m\ge 3$) 
be the subspace of $\cM^\omega_{ss}$ (resp. $\cM^m_{ss}$ with $m\ge 3$)
in which the Bunimovich squash-type stadia satisfy Assumption (IV) with homothety ratio $\chi$.

\begin{comment}

Similarly, let $\cM^\omega_{s}$ (resp. $\cM^m_{s}$ with $m\ge 3$) be
the space of Bunimovich stadia $\Omega$ such that 
$\Omega$ satisfies Assumptions $\mathrm{(I_{s})(II)(III_{s})}$
and the convex arcs $\Gamma_1$ and $\Gamma_2$ are analytic (resp. $C^m$ smooth) curves.
By the unfolding trick, it is clear that a Bunimovich stadium satisfying the focusing mechanism $\mathrm{(III_{s})}$
also satisfies the doubly focusing mechanism $\mathrm{(III_{ss})}$. 
Therefore, $\cM^\omega_{s}$ (resp. $\cM^m_{s}$ with $m\ge 3$) is a sub-class of 
$\cM^\omega_{ss}$ (resp. $\cM^m_{ss}$ with $m\ge 3$).
However, we shall not require Assumption (IV) - the homothetic condition for the
Bunimovich stadia,
and thus $\cM^\omega_{s}$ (resp. $\cM^m_{s}$ with $m\ge 3$) 
need not be a sub-class of 
$\cM^\omega_{ss}(\chi)$ (resp. $\cM^m_{ss}(\chi)$ with $m\ge 3$).

\end{comment}

In the analytic space $\cM^\omega_{ss}$ 
it is somewhat unconventional to have all four parts
$\Gamma_j,\ j=1,2,3,4$, analytic and at the same time have
Assumption (II) which requires that at the gluing points the boundary is $C^1$,
but not $C^2$. This condition will be used in the proof of 
Lemma~\ref{lem: main n vanish 00}, which is crucial to the proof of the dynamical spectral rigidity
for the Bunimovich squash-type stadia. 

\medskip

Our first main result is the following.

\begin{SqRtheorem*}%\label{thm: main1}
For any $\chi>0$, 
a Bunimovich squash-type stadium $\Omega\in \cM^\omega_{ss}(\chi)$
is  dynamically spectrally rigid in $\cM^\omega_{ss}(\chi)$.
\end{SqRtheorem*}

Important progress have been recently made for spectral rigidity of convex billiard tables.
Our result is similar to \cite{dSKW17}, in which De Simoi, Kaloshin and Wei established
the dynamical spectral rigidity for a class of finitely smooth strictly convex $\IZ_2$
symmetric domains sufficiently close to the circle. On the other hand, the Laplacian spectral rigidity
was proved by Hezari and Zelditch \cite{HeZel12}  for a one-parameter $C^\infty$
domains that preserve the $\IZ_2\times \IZ_2$ symmetry group of ellipse.
For the same class of domains in \cite{dSKW17}, Hezari \cite{Hezari17} showed the
Laplacian spectral rigidity under the Robin boundary condition. 
A three  disk model with $\IZ_2 \times \IZ_2$-symmetry and analytic boundary was considered
in \cite{dSKL19}, which showed that Marked Length Spectrum uniquely determined the boundary.
This result is analogous to well-known results of Zeldich \cite{Zel04, Zel09}.

Note that for any Bunimovich squash-type stadium $\Omega\in \cM_{ss}^m$ with $m\ge 3$,
there is a unique maximal period two orbit $\gamma^*=\overline{AB}$ (see Section~\ref{sec: period two}).
In the proof of Squash Rigidity Theorem, we actually show the flatness of the deformation function $\bn$ (see \eqref{def deformation function} for the definition) at
the period two orbit $\gamma^*$, which holds  
for the normalized family of Bunimovich squash-type stadia
not only in $\cM^\omega_{ss}(\chi)$ but also in $\cM^\infty_{ss}(\chi)$
(see Proposition~\ref{lem van der}).

\begin{theorem}\label{thm: main1'}
For any $\Omega\in \cM^\infty_{ss}(\chi)$ with some $\chi>0$ 
and for any $C^1$ one-parameter normalized family of dynamically isospectral domains
$\left\{\Omega_\mu\right\}_{|\mu|\le 1}$ in
$\cM^\infty_{ss}(\chi)$ with $\Omega_0=\Omega$,  we have
\beqn
\bn^{(d)}(A)=\bn^{(d)}(B)=0, \ \ \text{for any} \ d\ge 0.
\eeqn
\end{theorem}

We remark that our Theorem~\ref{thm: main1'} implies that the deformation 
function $\bn\equiv 0$ in the analytic case. Observe that in Corollary 1 of
\cite{HeZel12}\footnote{The 
first derivative $\left.\frac{d}{d\epsilon}\right|_{\epsilon=0} \rho_\epsilon(x)$ in \cite{HeZel12} is
in fact the deformation function $\bn$ in our setting, under suitable parametrization. 
}, a flat dependence on $\mu$ is proven. In the analytic setting 
it leads to absence of non-trivial real analytic deformation. We note that 
a similar problem of analysing periodic orbits approximating a period two orbit 
was studied in \cite{St03}. However, the proof is different here, because of 
the existence of singularity for billiards.

We provide the brief ideas and main steps on how to obtain our 
Theorem~\ref{thm: main1'}, which asserts the Taylor coefficients of 
the deformation function $\bn$ are all vanishing at the period two orbit $\gamma^*$. 
\begin{itemize}
\item In Section \ref{sec: period two} 
we analyse the dynamical properties of the period two orbit $\gamma^*$, 
and then in Section \ref{sec:palindromic}, we construct a sequence of palindromic
periodic orbits $\gamma_n$ such that the limiting semi-orbit $\gamma_\infty$ is homoclinic to the period two orbit  $\gamma^*$. Using the special properties of 
the palindromic orbits, we obtain quantitative estimates for the coordinates 
of $\gamma_n$  near $\gamma^*$ when $k\approx n/2$ 
(see Lemma~\ref{lem: finer est} and Remark~\ref{rem: finer est ga n}), 
as well as estimates for the shadowing of $\gamma_n$ along the homoclinic 
semi-orbit $\gamma_\infty$ (see Lemma~\ref{lem: finer est'}) . 
\item In Section \ref{sec:LIO} we study the Linearized Isospectral Functionals 
related to the special orbits. Following the work of \cite{dSKW17}, we obtain 
the special function $G(r, \varphi)=\bn(r)\cos\varphi$ must have vanishing periodic 
data for a dynamical isospectral family.  In particular, the sum of $G$ over 
the special orbits $\gamma^*$ and $\gamma_n$ must be zero. 
\item In Section \ref{sec: proof of thm1} we study the sum of $G$ over 
the palindromic orbit $\gamma_n$ by separate it into two sums: one is a global sum $S_n^{global}(\ell)$ (with minus sign) away from the period two orbit $\gamma^*$, 
the other is the local sum $S_n^{local}(\ell)$ near $\gamma^*$. As the total sum 
is vanishing, we must have $S_n(\ell)=S_n^{global}(\ell)=S_n^{local}(\ell)$. In other 
words, we define a sum $S_n(\ell)$ over $\gamma_n$ which has two representations  
(see Subsection~\ref{sec: sum G}).
\begin{itemize}
\item For the global sum: by Lemma~\ref{lem: finer est'}, when $k\le \ell$, 
the points $x_{2\ell + j}(k)$ on the palindromic orbit $\gamma_{2\ell + j}(k)$ lie on 
an asymptotic line through $\gamma_{\infty}(k)$, which have  asymptotic geometric 
spacing of order $\lambda^{-j}$. Due to this observation, we can perform 
the Lagrange interpolation method, either the classical or the weighted one,
to make a linear cancellation for the global sum representation, i.e., we show that 
a linear combination of $\{S_{2\ell+j}\}_{0\le j\le m}$ must vanish up to a higher 
order (see Lemma~\ref{lem: cancel}).
\item For the local sum: we can write down the Taylor expansions for the local 
representation of $S_n(\ell)$. Suppose that the lowest non-vanishing degree of $\bn$ 
is $d$. Using the linear cancellation that we have obtained in Lemma~\ref{lem: cancel}, 
we further obtain a linear equation with two variables $\bn^{(d)}(A)$ and $\bn^{(d)}(B)$, 
which further implies that $\bn^{(d)}(A)=\bn^{(d)}(B)=0$ by Assumption (IV).
\end{itemize}
\end{itemize}

\medskip

Note that Squash Rigidity Theorem 
is then a direct consequence
of Theorem~\ref{thm: main1'}, due to the analyticity of boundary
of Bunimovich squash-type stadia in $\cM^\omega_{ss}$.

\bigskip

\begin{comment}

The result in Squash Rigidity Theorem
directly applies to the class of Bunimovich stadia in $\cM^\omega_1$. 
Note that there are no particular restrictions on the flat boundaries for Bunimovich
stadia in $\cM^\omega_{s}$. 

\end{comment}

We also consider special Bunimovich stadia whose flat boundaries  
are a priori fixed. To be precise, let $\Gamma_3$ and $\Gamma_4$ be 
the opposite sides of a fixed rectangle.
We then denote by $\cM^\omega_{s, b}$ (resp. $\cM^m_{s, b}$ with $m\ge 3$)
the space of Bunimovich stadia $\Omega$ such that 
$\Omega$ satisfies Assumptions $\mathrm{(I_{s})(II)}$,
the convex arcs $\Gamma_1$ and $\Gamma_2$ are analytic curves (resp. $C^m$ curves),
and the flat boundaries are exactly $\Gamma_3$ and $\Gamma_4$. 
We remark that Assumption $\mathrm{(III_s)}$ - the defocusing mechanism 
and Assumption (IV) - the homothetic condition
are not required for the Bunimovich stadia in 
$\cM^\omega_{s, b}$ (resp. $\cM^m_{s, b}$ with $m\ge 3$).
Using the unfolding trick, we provide a simpler but quite different proof for the dynamical spectral rigidity
for  the class $\cM^\omega_{s, b}$. Namely,
our second main result is stated as follows.

\medskip
 
\begin{StRtheorem*}%\label{thm: main2}
A Bunimovich stadium $\Omega\in \cM^\omega_{s, b}$
is dynamically spectrally rigid in $\cM^\omega_{s, b}$.
\end{StRtheorem*}

The core in the proof of Stadium Rigidity Theorem
%~\ref{thm: main2}
is again flatness of the deformation function, but at the four
gluing points, i.e. 
$
P_{ij}:=\Gamma_i\cap \Gamma_j, \ \ i=1, 2, \ j=3, 4.
$
 (see Proposition~\ref{prop: vanishing bn stad} for the precise statements).

\begin{theorem}\label{thm: main2'}
For any $\Omega\in \cM^\infty_{s, b}$ and any $C^1$
one-parameter family of dynamically isospectral domains
$\left\{\Omega_\mu\right\}_{|\mu|\le 1}$ in
$\cM^\infty_{s, b}$ with $\Omega_0=\Omega$, we have
\beqn
\bn^{(d)}(P)=0, \ \ \text{for any} \ d\ge 0  \
\text{ and any gluing point } P \ \text{of} \ \Omega.
\eeqn
Here $\bn^{(d)}(P)$ denotes the one-sided $d$-th order derivative defined
on the part of convex arc $\Gamma_1$ or $\Gamma_2$.
\end{theorem}

In the proof of Theorem~\ref{thm: main2'}, we shall construct some special orbits 
of induced period two and four (see Lemma~\ref{lem: per 2} and Lemma~\ref{lem: per 4}), 
which are sufficient to compute the Taylor expansion of
the deformation function $\bn$ near the gluing points. 
The arguments there do not require the hyperbolicity at all, which is why
we can drop the defocusing mechanism.

\subsubsection{Marked length spectrum} 
%$\ML^{\max}_{\Omega} \left( \frac{n}{2n+1} \right)$}

Recall that $\gamma^*=\overline{AB}$ is the maximal period two orbit,
which bounces between  $\Gamma_1$ and $\Gamma_2$.
It is clear that the rotation number of $\gamma^*$ is $\frac12$.
Denote the free path $\tau^*=\tau(A, B):=\left| \overline{AB} \right|$, and note that $\tau^*=\diam(\Omega)$.

Our third main result demonstrates the marked length spectrum
provides information about the Lyapunov exponents along the
maximal period two orbit $\gamma^*$.

\begin{theorem}\label{thm: main3}
Let $\Omega$ be a Bunimovich squash-type stadium in $\cM^m_{ss}$ for $m\ge 3$.
Then with notations \eqref{eq:MLS} the following limits exist:
\beq
\begin{split}
-B_{\frac12} &:=\lim_{n\to\infty}
\left[ \ML^{\max}_{\Omega} \left( \frac{2n}{2n+1} \right) - (2n+1) \tau^*  \right], \label{def B}
\\
-\log \lambda &:=\, \lim_{n\to\infty} \frac{1}{n}
\log \left|\ML^{\max}_{\Omega} \left( \frac{2n}{2n+1} \right)  -
(2n+1) \tau^* + B_{\frac12}\right|,  \nonumber
\\
D_{\frac12}&:= \lim_{n\to \infty} \lambda^{n}
\left|\ML^{\max}_{\Omega} \left( \frac{2n}{2n+1} \right) - (2n+1) \tau^* + B_{\frac12}\right|. \nonumber
\end{split}
\eeq
\end{theorem}

The above theorem for Bunimovich squash-type stadia is similar to
results for strictly convex billiards in \cite{HKS18}. A somewhat similar
computation has been done for dispersing billiards in \cite{BdKL18}.
%The computation there allows to recover the curvature at period two orbit. 
In the Aubry-Mather theory, $B_{\frac12}$ is usually referred to
the Peierls' Barrier function evaluated on a certain homoclinic
orbit of $\gamma^*$, and $\lambda$ is the eigenvalue of
the linearization of the billiard map along $\gamma^*$.
In the proof of Theorem \ref{thm: main3}, we show that the quantity $B_{\frac12}$
is finite, and the convergence of \eqref{def B} is exponentially fast.

\begin{comment}
\medskip

Recall that a standard Bunimovich squash is a Bunimovich squash-type stadium 
which satisfies Assumption $\mathrm{(I_{ss})}$ and the two convex arcs are circular arcs (Assumptions $\mathrm{(II)(III_{ss})}$ are then automatically satisfied). 
We are able to recover its
diameter $\tau^*$ and further capture the curvatures $\cK_i$ of
$\Gamma_i$, $i=1, 2$, for the maximal marked length spectrum
associated to the rotation numbers $(q-1)/2q$. More precisely,
we have
\medskip

\begin{theorem}\label{main 1}
A standard Bunimovich squash $\Omega$ is totally determined by
\beqn
\left\{ \ML^{\max}_{\Omega} \left( \frac{q_k-1}{2q_k} \right):
\ \ \text{where the sequence}\ q_k\to \infty \right\}.
\eeqn
where $q_k$ are odd integers.
More precisely, we obtain
\beq
\tau^*=\lim_{n\to\infty} \frac{1}{2q}\, \ML^{\max}_{\Omega}
\left( \frac{q-1}{2q} \right),
\eeq
and compute $\cK_1$ and $\cK_2$ from the following equations:
{\allowdisplaybreaks
\begin{eqnarray}
\lambda_{\frac12}+ \lambda_{\frac12}^{-1}
&=& 4\left( \tau^* \cK_1 -1\right)\left(\tau^* \cK_2  -1\right)-2, \label{relation 1} \\
\frac{\left(C_{\frac12}^1 - C_{\frac12}^2\right)\left(\lambda_{\frac12}+1\right)^2}{\left(C_{\frac12}^1 + C_{\frac12}^2\right)\left(\lambda_{\frac12}-1\right)^2}
&=& \frac{\left( 1- \dfrac{1}{\tau^* \cK_1} \right)^2 -\left( 1- \dfrac{1}{\tau^*\cK_2} \right)^2}{\left( 1- \dfrac{1}{\tau^*\cK_1}-\dfrac{1}{\tau^*\cK_2}\right)^2+3}. \label{relation 2}
\end{eqnarray}}\\
\end{theorem}
\end{comment}

\bigskip

{\bf Plan of the paper:} \
The rest of the paper is organized as follows.
In Section \ref{sec:billiard-maps} we present
auxiliary facts about the billiard map and properties of
the billiard dynamics near the unique period two orbit.
In Section \ref{sec: period two} we study the billiard dynamics
in a neighborhood of the maximal period two orbit $\gamma^*$.
In Section \ref{sec:palindromic} we analyze the palindromic
periodic orbits approximating the period two orbit
$\gamma^*$.
In Section \ref{sec:LIO} we define linearized
isospectral functionals related to some special periodic orbits, 
whose properties are closely related to dynamical spectral rigidity.
%It is similar to the one proposed in \cite{dSKW17}.
In Section \ref{sec: proof of thm1} utilizing properties of
approximating palindromic periodic orbits we prove
Squash Rigidity Theorem. In Section \ref{sec: proof of thm2}
using a different approach we prove Stadium Rigidity Theorem.
Finally, in Section~\ref{sec8} we analyze the 
periodic orbits with rotation number $\pm\frac{n}{2n+1}$
and obtain shadowing estimates similar to Section \ref{sec:palindromic}.
Then
in Section \ref{sec: proof of thm3} we prove
Theorem \ref{thm: main3} about Lyapunov exponents of
the maximal period two orbit. 

\bigskip

{\bf Acknowledgement} VK acknowledges a partial support by the NSF grant DMS-1402164.
Discussions with Martin Leguil and Jacopo De Simoi were very useful. The authors would like to thank a referee for pointing out a mistake in a preliminary version of the paper. It led to a more restrictive (homothetic) rigidity for squash stadia.  
JC was partially supported by NSFC grant 12001392 and NSF of Jiangsu BK20200850.
H.-K. Zhang is partially supported by Simons Foundation Collaboration Grants
for Mathematicians (Grant No.706383).

\bigskip

\section{The Billiard Dynamics}
\label{sec:billiard-maps}

\subsection{The Billiard Map and Its Differential}\label{sec: collision}

Let $\Omega$ be a Bunimovich squash-type stadium.
We recall that the phase space $M$ has the form
$$
M=\left\{x=(r,\varphi)\,:\, 0\leq r \leq \left|\p \Omega\right|,\ \
\varphi \in \left[-\pi/2, \pi/2 \right] \right\}.
$$
and the billiard map $F: M\to M$ sends $z=(r, \varphi)$ to $z_1=(r_1, \varphi_1)$.
The free path between $z$ and $z_1$ is given by $\tau=\tau(z, z_1)=\left|\overline{PP_1}\right|$,
where $P$ and $P_1$ be the collision points at $\p\Omega$ corresponding to $z$ and $z_1$ respectively.
The derivative $D_z F$ is given by
\beq\label{der collision}
\begin{pmatrix}
dr_1 \\ d\varphi_1
\end{pmatrix}
=
\frac{-1}{\cos\varphi_1}
\begin{pmatrix}
\tau \cK + \cos \varphi  & \tau \\
\tau \cK \cK_1 + \cK \cos\varphi_1 +\cK_1 \cos\varphi &  \tau \cK_1 + \cos\varphi_1
\end{pmatrix}
\begin{pmatrix}
dr \\ d\varphi
\end{pmatrix},
\eeq
where
$\cK=\cK(z)$ and $\cK_1=\cK(z_1)$ are the signed curvature of $\p \Omega$ at $P$ and $P_1$ respectively
(see Section 2.11 in~\cite{CM06}).
In particular, $\cK(z)$ is negative if
$P$ belongs to the convex arcs $\Gamma_1\cup \Gamma_2$.

\subsection{Wave Fronts and Unstable Curves}

We recall some basic notions and formulae in \cite{CM06}.
Given a tangent vector $dz=(dr, d\varphi)\in T_z M$,
we denote by $\|dz\|=\sqrt{ dr^2 + d\varphi^2 }$ the Euclidean norm
and by $\|dz\|_p=\cos \varphi |dr|$ the $p$-norm.
The tangent vector $dz$ corresponds to a tangent line with slope $\CcV=d\varphi/dr$ in $T_z M$,
as well as a pre-collisional wave front with slope $\cB^-$
and post-collisional wave front with slope $\cB^+$ in the phase space of the billiard flow.
The relation between these slopes are given by
\beqn
\CcV = \cB^-\cos\varphi + \cK = \cB^+\cos\varphi - \cK.
\eeqn

For any $z=(r, \varphi)\in M$ lying on the convex arcs $\Gamma_1\cup \Gamma_2$,
the $DF$-invariant unstable cones is given by
\beqn
\CcC^u(z)
=\left\{\cK\le \CcV\le 0\right\}
=\left\{ 0\le \cB^-\le 1/d \right\}
=\left\{ -2/d \le \cB^+\le -1/d \right\}.
\eeqn
where $\cK=\cK(z)<0$ and $d=-\cos\varphi/\cK$.
By the defocusing mechanism and the compactness of $\Gamma_1\cup \Gamma_2$,
there exists $\rho_0>1$ such that
\begin{itemize}
\item[(i)] $\tau=\tau(z, z_1)\ge 2\rho_0 d$ if $z$ lies on $\Gamma_1$
and $z_1=F(z)$ lies on $\Gamma_2$, or vice versa;
\item[(ii)] if further the doubly defocusing mechanism holds, and
$z$ lies on $\Gamma_1$ (resp. on $\Gamma_2$), $z'=F(z)$ lies on $\Gamma_3$
but $z_1=F^{2}(z)$ lies on $\Gamma_2$ (resp. on $\Gamma_1$),
then
$$
\tau=\widetilde\tau(z, \widetilde{z}_1)\ge 2\rho_0 d,
$$
where $\widetilde\tau(\cdot, \cdot)$ is the free path for the double cover table
$\widetilde\Omega$ (see Fig.~\ref{fig:stadia}, right),
and $\widetilde{z}_1$ is symmetric to $z_1$ with respect to $\Gamma_3$.
\end{itemize}
In either case, if $dz\in \CcC^u(z)$, then $dz_1 \in \CcC^u(z_1)$ and
\beq\label{def Lambda}
\frac{\|dz_1\|_p}{\|dz\|_p} =\frac{\left| \cB^+ \right|}{\left| \cB^-_1 \right|}
= \left| 1+\tau \cB^+ \right|=-1-\tau \cB^+ \ge 2\rho_0-1=:\Lambda>1.
\eeq
Given a smooth curve $W$ in $M$, we denote its Euclidean length and $p$-length by
\beqn
|W|=\int_W \|dz\|, \ \text{and}\ \ |W|_p=\int_W \|dz\|_p.
\eeqn
If it is an unstable curve, i.e., $d\varphi/dr\in\CcC^u(z)$ for any $z=(r, \varphi)\in W$, then
$F(W)$ (resp. $F^2(W)$) is an unstable curve in the above case (i) (resp. case (ii)),
as long as it does not hit the gluing points. Moreover, we have
\beq\label{wave contraction}
|F(W)|_p\ge \Lambda |W|_p \ \text{in case (i), \ or} \
|F^2(W)|_p\ge \Lambda |W|_p \ \text{in case (ii)}.
\eeq

\subsection{Variation of a Free Path}

In the above notations the billiard map $F$ sends $z=(r, \varphi)$
to $z_1=(r_1, \varphi_1)$. Note that if $r$ and $r_1$ are given,
then $\varphi$ and $\varphi_1$ are uniquely determined.
As the free path $\tau=\tau(z, z_1)$ is only determined by $r$
and $r_1$, we also write $\tau=\tau(r, r_1)$.
Elementary geometry shows that
\beq\label{tau dr}
\frac{\partial \tau}{\partial r}=-\sin \varphi\quad \ \  \text{and} \quad \ \ \frac{\partial \tau}{\partial r_1}=\sin \varphi_1.
\eeq
The following lemma provides a variational formula of a free path.

\begin{lemma}\label{lem: var path}
The variation, from $\tau(r, r_1)$ to $\tau(r+\Delta r, r_1+ \Delta r_1)$,
has the form:
{\allowdisplaybreaks
\begin{eqnarray}\label{var path}
\Delta \tau &=& \tau(r+\Delta r, r_1+ \Delta r_1) -\tau(r, r_1) \nonumber \\
&=& -\sin \varphi \ \Delta r + \sin \varphi_1 \ \Delta r_1  \nonumber \\
& & +\frac12\left[ \alpha(z) \Delta r^2 +
 \beta(z, z_1)\Delta r \Delta r_1
+  \alpha(z_1) \Delta r_1^2 \right]  \nonumber \\
& &  + \cO\left(\left(\Delta r^2+\Delta r_1^2\right)^{\frac32}\right),
\end{eqnarray}
\beqn
\text{with} \ \ 
\alpha(z)=\cK \cos\varphi + \frac{\cos^2\varphi}{\tau},
\alpha(z_1)=\cK_1 \cos\varphi_1 + \frac{\cos^2\varphi_1}{\tau}
\ \ \text{and} \ \
\beta(z, z_1)=\frac{2\cos \varphi \cos\varphi_1}{\tau},
\eeqn
}where $\cK$ and  $\cK_1$ are the signed curvatures of
$\p \Omega$ at $r$ and $r_1$ respectively.
\end{lemma}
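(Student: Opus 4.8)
The plan is to read off \eqref{var path} as the second-order Taylor expansion of the function $(r,r_1)\mapsto\tau(r,r_1)$ about the base point $(r,r_1)$, whose first-order part is already recorded in \eqref{tau dr}; thus everything reduces to computing the three second partial derivatives $\p_r^2\tau$, $\p_r\p_{r_1}\tau$, $\p_{r_1}^2\tau$ at $(r,r_1)$. To do this, let $\gamma$ be the arclength parametrization of the $C^3$ piece of $\p\Omega$ in question, so that $|\gamma'|\equiv1$ and $\gamma''=\cK\,\mathbf n$ with $\mathbf n$ the unit \emph{outward} normal (this is why $\cK<0$ along $\Gamma_1\cup\Gamma_2$, cf. Section~\ref{sec: collision}); write $P=\gamma(r)$, $P_1=\gamma(r_1)$, $\tau=|\gamma(r_1)-\gamma(r)|$ and let $u:=(\gamma(r_1)-\gamma(r))/\tau$ be the unit vector along $\overline{PP_1}$. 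In this notation I would first recover \eqref{tau dr} as $\p_r\tau=-\gamma'(r)\cdot u=-\sin\varphi$ and $\p_{r_1}\tau=\gamma'(r_1)\cdot u=\sin\varphi_1$; this also pins down the orientation conventions for $\varphi$ and $\varphi_1$ used below.

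Next I would differentiate these once more. Differentiating $u$ and using the first-order formulas gives
\beqn
\p_r u=-\frac{\gamma'(r)-(\gamma'(r)\cdot u)\,u}{\tau},\qquad
\p_{r_1}u=\frac{\gamma'(r_1)-(\gamma'(r_1)\cdot u)\,u}{\tau},
\eeqn
i.e. $\mp1/\tau$ times the component of $\gamma'(r)$, resp. $\gamma'(r_1)$, orthogonal to $u$. Dotting with $\gamma'(r)$, resp. $\gamma'(r_1)$, and using $|\gamma'|^2=1$ together with $\gamma'(r)\cdot u=\sin\varphi$, $\gamma'(r_1)\cdot u=\sin\varphi_1$ yields $\gamma'(r)\cdot\p_r u=-\cos^2\varphi/\tau$ and $\gamma'(r_1)\cdot\p_{r_1}u=\cos^2\varphi_1/\tau$; and from $\gamma''=\cK\mathbf n$ together with the elementary relations $\mathbf n(r)\cdot u=-\cos\varphi$, $\mathbf n(r_1)\cdot u=\cos\varphi_1$ (the outgoing segment points away from the outward normal at $P$, the incoming segment toward it at $P_1$) one gets $\gamma''(r)\cdot u=-\cK\cos\varphi$ and $\gamma''(r_1)\cdot u=\cK_1\cos\varphi_1$. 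Combining,
\beqn
\p_r^2\tau=-\gamma''(r)\cdot u-\gamma'(r)\cdot\p_r u=\cK\cos\varphi+\frac{\cos^2\varphi}{\tau}=\alpha(z),
\eeqn
and by the symmetric computation $\p_{r_1}^2\tau=\cK_1\cos\varphi_1+\cos^2\varphi_1/\tau=\alpha(z_1)$. For the mixed partial, $\p_{r_1}\p_r\tau=-\gamma'(r)\cdot\p_{r_1}u=-\big(\gamma'(r)\cdot\gamma'(r_1)-\sin\varphi\sin\varphi_1\big)/\tau$, and since $\gamma'(r)=\sin\varphi\,u\pm\cos\varphi\,u^{\perp}$ and $\gamma'(r_1)=\sin\varphi_1\,u\pm\cos\varphi_1\,u^{\perp}$ with \emph{opposite} signs (again fixed by the orientation convention), one has $\gamma'(r)\cdot\gamma'(r_1)=\sin\varphi\sin\varphi_1-\cos\varphi\cos\varphi_1$, whence $\p_{r_1}\p_r\tau=\cos\varphi\cos\varphi_1/\tau=\tfrac12\beta(x,x_1)$.

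Finally, since the arc is $C^3$ and lies in a compact region on which $\tau$ is bounded away from $0$, the function $\tau$ is $C^3$ there with bounded third derivatives, so Taylor's theorem gives \eqref{var path} with remainder $\cO((\Delta r^2+\Delta r_1^2)^{3/2})$ once the values of $\p_r\tau$, $\p_{r_1}\tau$, $\p_r^2\tau$, $\p_r\p_{r_1}\tau$, $\p_{r_1}^2\tau$ at the base point are substituted. I do not expect a genuine obstacle here: the content is a direct computation. The only points requiring care are bookkeeping ones — keeping the sign/orientation conventions for $\varphi$, $\varphi_1$, $\cK$ mutually consistent, and making sure $P$, $P_1$ and their perturbations stay in the interior of a single $C^3$ component of $\p\Omega$, away from the gluing points where $\cK$ jumps and a two-sided expansion would be unavailable.
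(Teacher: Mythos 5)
Your proposal is correct and follows the same overall strategy as the paper: both reduce the lemma to computing the Hessian of the generating function $\tau(r,r_1)$ and then invoke the second-order Taylor expansion with a $C^3$ remainder. The only divergence is in how the second partials are obtained: the paper reads off $\p\varphi/\p r$, $\p\varphi_1/\p r$ (and their time-reversed counterparts) from the billiard-map differential \eqref{der collision} with $dr_1=0$ and then chains these with \eqref{tau dr}, whereas you differentiate the chord-length formula $\tau=|\gamma(r_1)-\gamma(r)|$ directly, tracking the unit vector $u$ and the Frenet data of the arcs. Your route is more self-contained (it does not presuppose \eqref{der collision}) at the cost of the sign/orientation bookkeeping you flag; both yield the identical Hessian \eqref{tau dr 2}, and your closing remark about staying inside a single $C^3$ component away from the gluing points is the right caveat.
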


\begin{proof}
Taking $dr_1=0$ in \eqref{der collision}, we obtain that
\beqn
\frac{\p \varphi}{\p r}=-\cK-\frac{\cos \varphi}{\tau}, \ \
\text{and} \ \
\frac{\p \varphi_1}{\p r} = \frac{\cos\varphi}{\tau}.
\eeqn
By time-reversibility, i.e., $(r_1, -\varphi_1)\mapsto (r, -\varphi)$, we also have
\beqn
\frac{\p \varphi_1}{\p r_1}=\cK_1 + \frac{\cos \varphi_1}{\tau}, \ \ \text{and} \ \
\frac{\p \varphi}{\p r_1}= - \frac{\cos\varphi_1}{\tau}.
\eeqn
By \eqref{tau dr}, we further obtain
{\allowdisplaybreaks
\begin{eqnarray}\label{tau dr 2}
\frac{\p^2 \tau}{\p r^2} & =& -\cos \varphi \frac{\p \varphi}{\p r} = \cK \cos\varphi + \frac{\cos^2\varphi}{\tau}, \nonumber\\
\frac{\p^2 \tau}{\p r \p r_1}  &=& -\cos \varphi \frac{\p \varphi}{\p r_1} = \frac{\cos \varphi \cos\varphi_1}{\tau}, \\
\frac{\p^2 \tau}{\p r_1^2} & =& \cos \varphi_1 \frac{\p \varphi_1}{\p r_1} = \cK_1 \cos\varphi_1 + \frac{\cos^2\varphi_1}{\tau}. \nonumber
\end{eqnarray}
}Therefore, \eqref{var path} follows from \eqref{tau dr} and \eqref{tau dr 2},
and the Taylor expansion of $\tau(x, x_1)$ up to the second order.
\end{proof}

\bigskip

\section{Analysis of the Period Two Orbit $\gamma^*$}\label{sec: period two}

\subsection{Existence and Uniqueness of the Period Two Orbit $\gamma^*$}
\label{sec: uniqueness}

Let $\Omega$ be a Bunimovich squash-type stadium  in $\cM^m_{ss}$ for $m\ge 3$.
The existence and uniqueness of the period two orbit,
which bounces between $\Gamma_1$ and $\Gamma_2$,
is due to the following lemma.

\begin{lemma}\label{lem: gamma uniq}
There exists a unique pair of points $(A, B)\in \Gamma_1\times \Gamma_2$
such that $\overline{AB}$ is perpendicular to both $\Gamma_1$ and $\Gamma_2$.
\end{lemma}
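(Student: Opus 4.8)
The plan is to reduce the statement to a one-dimensional critical point problem for the distance function and then exploit the strict convexity of $\Gamma_1$ and $\Gamma_2$ together with the defocusing geometry. Concretely, I would consider the function
\[
D:\Gamma_1\times\Gamma_2\to\IR,\qquad D(P_1,P_2)=\left|\overline{P_1P_2}\right|.
\]
Since $\Gamma_1$ and $\Gamma_2$ are compact, $D$ attains a minimum at some pair $(A,B)$, and because the stadium has the two flat sides $\Gamma_3,\Gamma_4$ separating the arcs, $D$ is bounded away from $0$, so $A$ and $B$ lie in the relative interiors of $\Gamma_1$ and $\Gamma_2$. At an interior minimum the first-order condition $\partial_{P_1}D=\partial_{P_2}D=0$ says exactly that the segment $\overline{AB}$ is orthogonal to $\Gamma_1$ at $A$ and to $\Gamma_2$ at $B$; this is the standard fact that the gradient of $|\overline{P_1P_2}|$ in $P_1$ is the unit vector from $P_2$ toward $P_1$ projected onto $T_{P_1}\Gamma_1$. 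This establishes existence.

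For uniqueness I would argue that any pair $(P_1,P_2)$ with $\overline{P_1P_2}\perp\Gamma_1,\Gamma_2$ is automatically a \emph{strict local minimum} of $D$, and that a function on a connected compact domain cannot have two distinct strict local minima unless it has an intervening critical point that is not a local minimum — so it suffices to show every critical point of $D$ is a strict local minimum. Compute the Hessian of $D$ at a critical point: writing $\tau=|\overline{P_1P_2}|$ and using arclength parameters on the two arcs with signed curvatures $\cK_1,\cK_2$ (negative on the convex arcs in our sign convention), the second-order Taylor expansion of $\tau$ is governed by the quadratic form appearing in Lemma~\ref{lem: var path}, namely with diagonal entries $\alpha_i=\cK_i+\tfrac1\tau$ (note at a critical point $\varphi=\varphi_1=0$, so $\cos\varphi=\cos\varphi_1=1$) and off-diagonal entry $\tfrac1\tau$ on each side, i.e.
\[
\mathrm{Hess}\,D=\begin{pmatrix}\cK_1+\tfrac1\tau & \tfrac1\tau\\[1mm] \tfrac1\tau & \cK_2+\tfrac1\tau\end{pmatrix}.
\]
This is positive definite iff $\cK_1+\tfrac1\tau>0$ and $(\cK_1+\tfrac1\tau)(\cK_2+\tfrac1\tau)>\tfrac1{\tau^2}$. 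Here is where the defocusing mechanism \eqref{def defocusing} enters: evaluated at a normal segment, the condition $\tau>|\overline{P_1Q_1}|$ with $Q_1$ the second intersection with the osculating circle of radius $R_1=-1/\cK_1$ says precisely $\tau>2R_1$, i.e. $\cK_1+\tfrac1\tau>\cK_1+\tfrac1{2R_1}=-\tfrac1{2R_1}$… — more usefully, $\tau>2R_i$ gives $1+\tau\cK_i<1-2=-1<0$, hence $\cK_i+\tfrac1\tau<0$; rewriting, $\tfrac1\tau+\cK_i=\tfrac1\tau(1+\tau\cK_i)$, and one checks the product condition $(1+\tau\cK_1)(1+\tau\cK_2)>1$ holds because each factor is $<-1$. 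Thus the Hessian is positive definite (both eigenvalues of the right sign), so every critical point is a nondegenerate local minimum.

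Finally I would upgrade "every critical point is a strict local min" to global uniqueness: on $\Gamma_1\times\Gamma_2$, which is a compact connected $2$-manifold (a cylinder/torus piece, or with boundary — but the minimum is interior so we may restrict to a neighborhood and use that $D$ is smooth there), a smooth function all of whose critical points are strict local minima can have only one critical point — otherwise, connecting two such minima by a path and taking the min-max over paths produces a mountain-pass critical point that is not a local minimum, a contradiction. (Alternatively, one can run a direct monotonicity/convexity argument: parametrize by the foot $P_1$ on $\Gamma_1$, let $g(P_1)=\min_{P_2}|\overline{P_1P_2}|$, show $g$ is smooth with $g''>0$ wherever $g'=0$ by the Hessian computation above, hence $g$ has a unique minimum, and the minimizing $P_2=P_2(P_1)$ is unique by strict convexity of $\Gamma_2$.) I expect the \textbf{main obstacle} to be the bookkeeping that makes the defocusing inequality \eqref{def defocusing} translate cleanly into positive-definiteness of the $2\times2$ Hessian — in particular handling the sign conventions for $\cK_i$ and confirming that defocusing at the \emph{normal} chord (not just at a generic chord) is exactly the inequality $\tau^*>2R_i$ needed — and secondarily the topological argument ruling out a saddle, which is clean in spirit but needs the right compactness statement for $\Gamma_1\times\Gamma_2$.
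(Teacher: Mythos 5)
Your proposal has a genuine sign error that propagates through both the existence and the uniqueness steps. The perpendicular chord $\overline{AB}$ is the \emph{maximum} of the distance function on $\Gamma_1\times\Gamma_2$ (it is the diameter of $\Omega$, as the paper notes right after the lemma), not the minimum. Your own Hessian computation shows this: with $\cK_i<0$ on the convex arcs and the defocusing inequality $\tau>2R_i=-2/\cK_i$, you correctly derive $1+\tau\cK_i<-1$, hence $\cK_i+\tfrac1\tau=\tfrac1\tau(1+\tau\cK_i)<0$ — both diagonal entries of your Hessian are negative, and together with the positive determinant this makes the matrix \emph{negative} definite. Concluding "positive definite" from this is self-contradictory; every interior critical point is a strict local maximum, so the minimization setup is the wrong one. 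Relatedly, your existence step fails: the minimum of $D$ on the compact set $\Gamma_1\times\Gamma_2$ is typically attained at the gluing points (for the standard stadium it equals the length of the flat sides, realized at the endpoints of the arcs), and "$D$ is bounded away from $0$" does not force the minimizer into the relative interior. The paper instead takes the global \emph{maximum} of $\tau$ and rules out a boundary maximizer by a first-order argument: at a gluing point the $C^1$ tangency of $\Gamma_1$ to the flat side forces $\varphi$ and $\Delta r$ to have opposite signs, so $\tau(r+\Delta r,r_1)-\tau(r,r_1)=-\sin\varphi\,\Delta r+\cO(|\Delta r|^2)>0$, contradicting maximality on the boundary.

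For uniqueness, your mountain-pass argument is both unnecessary and delicate here ($\Gamma_1\times\Gamma_2$ is a square with boundary, so the min-max critical point could sit on the boundary where it is not a critical point in the interior sense). The paper's route is shorter: the defocusing condition \eqref{def defocusing} is assumed for \emph{all} chords $\overline{P_1P_2}$, not only the normal one, so $\cK\tau<-2\cos\varphi$ and $\cK_1\tau<-2\cos\varphi_1$ hold everywhere, the Hessian \eqref{tau dr 2} is negative definite at every point of $\Gamma_1\times\Gamma_2$, and $\tau$ is globally strictly concave; a strictly concave function has at most one critical point, and the interior global maximum is it. If you flip min to max throughout and replace the topological step by global concavity, your outline becomes essentially the paper's proof.
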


\begin{proof} Consider the free path
$\tau=\tau(r, r_1)$ for $(r, r_1)\in \Gamma_1\times \Gamma_2$,
and let $P$ and $P_1$ be the collision points
corresponding to $r$ and $r_1$ respectively.
Let $\varphi$ (resp.  $\varphi_1$) be the
angle formed by the vector $\overline{r r_1}$
and the inward (resp. outward) inner normal vector at $r$ (resp. at $r_1$).
By \eqref{tau dr},
$\overline{PP_1}$ is perpendicular to both $\Gamma_1$ and $\Gamma_2$
if and only if $(r, r_1)$ is a critical point of $\tau$.
Moreover,  by \eqref{tau dr 2}, the Hessian matrix of $\tau$ is given by
$$
\begin{pmatrix}
\cK \cos\varphi + \dfrac{\cos^2\varphi}{\tau} & \dfrac{\cos \varphi \cos\varphi_1}{\tau} \\
\dfrac{\cos \varphi \cos\varphi_1}{\tau} & \cK_1 \cos\varphi_1 + \dfrac{\cos^2\varphi_1}{\tau}
\end{pmatrix}.
$$
By the defocusing mechanism \eqref{def defocusing}, we have
$\cK\tau<-2\cos\varphi$ and $\cK_1\tau<-2\cos\varphi_1$, and thus the Hessian matrix of $\tau$
is negative definite since
\beqn
\cK \cos\varphi + \dfrac{\cos^2\varphi}{\tau} < -\dfrac{\cos^2\varphi}{\tau}<0,
\eeqn
and the determinant of Hessian is
\beqn
\left(\cK \cos\varphi + \dfrac{\cos^2\varphi}{\tau} \right)
\left(\cK_1 \cos\varphi_1 + \dfrac{\cos^2\varphi_1}{\tau}\right)
-\dfrac{\cos^2 \varphi \cos^2\varphi_1}{\tau^2}>0.
\eeqn
In other words, $\tau$ is a strictly concave function on $\Gamma_1\times \Gamma_2$,
and thus there can be at most one critical point for $\tau$.

On the compact domain $\Gamma_1\times \Gamma_2$,
$\tau$ has a global maximum point, say $(A, B)$. We
claim that $(A, B)$ must be an interior point of $\Gamma_1\times \Gamma_2$.
Otherwise, let $(r, r_1)$ be the arclength representation of $(A, B)$, and assume
that $r\in \p\Gamma_1$. Since $\Gamma_1$ is $C^1$ tangent to
flat boundaries at the gluing point,
when $r+\Delta r\in \Gamma_1$ for small $\Delta r$, we must have
$\varphi$ and $\Delta r$ are of opposite signs.
By \eqref{tau dr},
\beqn
\tau(r+\Delta r, r_1)=-\sin\varphi \Delta r + \cO(|\Delta r|^2) >0,
\eeqn
which implies that $(A, B)$ is not even a local maximum - Contradiction.
Therefore, the global maximum point $(A, B)$ is an interior point and thus
the only critical point of $\tau$ on $\Gamma_1\times \Gamma_2$.
\end{proof}

From the proof, we actually get $\tau^*=\tau(A, B)=|\overline{AB}|=\diam(\Omega)$.
In the rest of this section, we consider the maximal period two billiard orbit $\gamma^*=\overline{AB}$
that collides alternatively at $A\in \Gamma_1$ and $B\in\Gamma_2$.

\subsection{Hyperbolicity of the Maximal Period Two Orbit $\gamma^*$}\label{sec: hyperbolic}

Since $\gamma^*=\overline{AB}$ is perpendicular to
both $\Gamma_1$ and $\Gamma_2$,
we denote $x=(r_1, 0)$ and $y=(r_2, 0)$ the collision vectors at
$A$ and $B$ respectively,
for some $0<r_1<r_2<\left|\p \Omega\right|$.
We shall also use
the notation $\gamma^*=\overline{xy}$ for $\gamma^*=\overline{AB}$,
and use the notation $\tau^*=\tau(x, y)$ for $\tau^*=\tau(A, B)$.
\medskip

For convenience, we may choose $s$ as an arclength parameter on
$\Gamma_1$ oriented in the counter-clockwise direction, with $s=0$
corresponding to the position of $A$.
Similarly, we denote $t$ as a counter-clockwise arclength
parameter on $\Gamma_2$ , with $t=0$ corresponding to
the position of $B$.

Using the coordinate $(s, \varphi)$ on $\Gamma_1$ and $(t,\varphi)$
on $\Gamma_2$, the differential of the billiard map $F$ along
$\gamma^*=\overline{xy}$ can be represented by
the following matrices:
{\allowdisplaybreaks
\beq\label{DF 12}
\begin{split}
D_{x} F &=
-\begin{pmatrix}
1-\tau^* \cK_A   & \tau^* \\
\\
\tau^*\,\cK_A\cK_B-\cK_A - \cK_B  & 1-\tau^*\,\cK_B
\end{pmatrix}
=:
-\begin{pmatrix}
a_1   & \tau^* \\
b  &   a_2
\end{pmatrix}, \
%\text{ and}
\\ \ \
D_{y} F &=
-\begin{pmatrix}
a_2   & \tau^* \\
b  &   a_1
\end{pmatrix},
\end{split}
\eeq}where $\cK_A$ and $\cK_B$ are the absolute curvature of
$\p\Omega$ at $A$ and $B$ respectively.
By the defocusing mechanism \eqref{def defocusing}, we have
$\tau^*>\max\{ 2/\cK_A, \ 2/\cK_B\}$, which means that
$a_1<-1$ and $a_2<-1$. Note that $a_1a_2-b\tau^*=1$. Hence
\beq\label{DF2 1}
\begin{split}
D_{x} F^2 &=
\begin{pmatrix}
2a_1a_2-1  & 2a_2\tau^* \\
2a_1b  & 2a_1a_2-1
\end{pmatrix}, \,
\\
%\text{and} \ \
D_{y} F^2 &=
\begin{pmatrix}
2a_2a_1-1  & 2a_1\tau^* \\
2a_2b  & 2a_1a_2-1
\end{pmatrix}
\end{split}
\eeq
are hyperbolic matrices
since they have determinant one and the same trace
\beq\label{lambda relation}
\lambda+\lambda^{-1}=2(2a_1a_2-1) > 2,
\eeq
where  $\lambda$  denotes the leading eigenvalues of
$D_{x} F^2$ (which is the same for $D_{y} F^2$).
Therefore, $\gamma^*$ is a hyperbolic orbit.

The variation of the free path near $\gamma^*=\overline{xy}$
can be simplified as follows: if collision points move from
$(s, t)=(0, 0)$ to $(s, t)=(\Delta s, \Delta t)$,
then \eqref{var path} reads
\beq\label{var path 0}
\Delta \tau = \frac{1}{2\tau^*}\left[ a_1 \Delta s^2 + 2\Delta s \Delta t +
a_2 \Delta t^2 \right]
 + \cO\left(\left(\Delta s^2+\Delta t^2\right)^{\frac32}\right),
\eeq
where $a_1$ and $a_2$ are given by \eqref{DF 12}.

\subsection{The Linearization near $\gamma^*$}

We denote by $\theta_1^s$ (resp. $\theta_1^u$)
the angle formed by the unit stable (resp. unstable) vector
$V^{s}_{x}$ (resp. $V^u_{x}$)
of $D_{x} F^2$ with the positive $r$-axis, then
$$
V^s_{x}=(\cos\theta_1^s, \sin\theta_1^s)\quad \text{ and }
\quad V^u_{x}=(\cos\theta_1^u, \sin\theta_1^u).
$$
Using  \eqref{DF2 1} and the eigenvector equations:
\beqn
\begin{split}
(2a_1a_2-1-\lambda^{-1}) \cos\theta_1^s +
2a_2\tau^* \sin\theta_1^s &= 0, \\
(2a_1a_2-1-\lambda) \cos\theta_1^u +
2a_2\tau^* \sin\theta_1^u &=0,
\end{split}
\eeqn
we obtain
\beq
\label{theta1}
\tan\theta_1^s=\frac{\lambda^{-1}-\lambda}{4 a_2\tau^*}
=-\tan\theta_1^u.
\eeq
We then simply denote $\theta_1=\theta_1^s$, and, thus,
$\theta_1^u=-\theta_1$. Also, we rewrite
\beqn \label{eq:angle-theta}
V^s_{x}=(\cos\theta_1,\ \sin\theta_1)\ \text{ and }\
V^u_{x}=(\cos\theta_1,\, -\sin\theta_1).
\eeqn
Similarly, we denote the unit stable vector
$V^{s}_{y}=(\cos\theta_2,\sin\theta_2)$ and the unit
unstable vector $V_{y}^u=(\cos\theta_2, -\sin\theta_2)$ for
the matrix $D_{y} F^2$, where the angle $\theta_2$
satisfies that
\beq\label{theta2}
\tan\theta_2=\frac{\lambda^{-1}-\lambda}{4 a_1\tau^*}.
\eeq
In addition, using the fact that
$$
D_{x}F (V^u_{x}, V^s_{x})=(\lambda_{1, u} V^u_{y}, \lambda_{1, s} V^s_{y}) \ \ \ \text{ and }\
\ \
D_{y}F (V^u_{y}, V^s_{y})=(\lambda_{2, u} V^u_{x}, \lambda_{2, s} V^s_{x}),
$$
we obtain
\beq
\label{lambdazw}
\begin{split}
\lambda_{1, u}=-\frac{\cos\theta_1}{\cos\theta_2}\cdot \frac{1+\lambda }{2a_2}
\ \ \text{ and } \ \
\lambda_{1, s}=-\frac{\cos\theta_1}{\cos\theta_2}\cdot \frac{1+\lambda^{-1}}{2a_2}, 
\\
\lambda_{2, u}=-\frac{\cos\theta_2}{\cos\theta_1}\cdot \frac{1+\lambda }{2a_1}
\ \ \text{ and } \ \
\lambda_{2, s}=-\frac{\cos\theta_2}{\cos\theta_1}\cdot \frac{1+\lambda^{-1} }{2a_1}.
\end{split}
\eeq
It is easy to verify that 
$\lambda_{i, u}>1>\lambda_{i, s}$ for $i=1, 2$. Also,
$\lambda_{1, u}\lambda_{2, u}=\lambda$ and 
$\lambda_{1, s}\lambda_{2, s}=\lambda^{-1}$. 
Note that usually $\lambda_{i, u}\lambda_{i, s}\ne 1$, $i=1, 2$, 
unless the parallelogram formed by $(V^u_{x}, V^s_{x})$ and the one formed by $(V^u_{y}, V^s_{y})$ 
have the same area.

%$\lambda_{i, u}\lambda_{i, s}= \frac{\tan\theta_1}{\tan\theta_2}$ if and only if  $\theta_1=\theta_2$, which implies $a_1=a_2$ and thus $\cK_A=\cK_B$. 

\medskip

To study the billiard map near $\gamma^*=\overline{xy}$,
we first recall a well known result about the linearization
near a saddle in dimension two (see e.g. \cite{Stowe86, ZZ14}).

\begin{lemma}\label{lem: linearization}
For any $\eps>0$, there are $C^{1, \frac12}$ diffeomorphisms
$\Psi_1: U_1\to \Psi_1(U_1)\subset W_1$
and $\Psi_2: U_2\to \Psi_2(U_2)\subset W_2$,
where $U_1$, $W_1$ are neighborhoods of  $x$ and
$U_2$, $W_2$ are neighborhoods of  $y$,
such that
\beqn
\Psi_2^{-1}\circ F\circ \Psi_1= D_{x} F, \ \ \text{and}\ \
\Psi_1^{-1}\circ F\circ \Psi_2= D_{y} F.
\eeqn
Moreover, $\Psi_1(x)=x$, $\Psi_2(y)=y$,
$\left\|\Psi_1^{\pm 1}- \mathrm{Id} \right\|_{C^1} \le \eps$,
$\left\|\Psi_2^{\pm 1}- \mathrm{Id} \right\|_{C^1} \le \eps$, and
\beqn
\Psi_1^{\pm 1}(x_1)-\Psi_1^{\pm 1}(x_2)=x_1-x_2 + O\left( \left|x_1-x_2\right| \cdot \max\left\{|x_1|^{0.5}, |x_2|^{0.5}\right\} \right), \,\, x_1, x_2\in U_1
\eeqn
\beqn
\Psi_2^{\pm 1}(y_1)-\Psi_2^{\pm 1}(y_2)=y_1-y_2 + O\left( \left|y_1-y_2\right| \cdot \max\left\{|y_1|^{0.5}, |y_2|^{0.5}\right\} \right),\,\,\, y_1, y_2\in U_2.
\eeqn
\end{lemma}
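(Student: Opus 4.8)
The plan is to reduce the statement to the standard Sternberg-type $C^{1}$ (indeed $C^{1,1/2}$) linearization theorem for a hyperbolic fixed point of a surface map, applied to the second-iterate map $F^{2}$, and then to transfer the conjugacy back to $F$ itself by a commutation trick. First I would observe that, by Section~\ref{sec: hyperbolic}, $D_zF^2$ and $D_wF^2$ are hyperbolic matrices with the same eigenvalues $\lambda, \lambda^{-1}$ (with $\lambda>1$), and that $F^{2}$ is a $C^{2}$ (in fact $C^{r-1}$, $r\ge 3$) local diffeomorphism fixing $z$ and $w$ respectively, since $\gamma^{*}=\overline{zw}$ stays away from the gluing points. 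Applying the linearization theorem of Stowe~\cite{Stowe86} (see also \cite{ZZ14}) to $F^{2}$ at $z$ yields a $C^{1,1/2}$ diffeomorphism $\Psi_{1}$ defined on a neighborhood $U_{1}$ of $z$, with $\Psi_{1}(z)=z$, $\Psi_{1}^{-1}\circ F^{2}\circ\Psi_{1}=D_zF^2$, and $\|\Psi_{1}^{\pm1}-\mathrm{Id}\|_{C^{1}}\le\eps$ after shrinking $U_{1}$; since the conjugacy may be taken tangent to the identity at $z$, the $C^{1,1/2}$ regularity gives precisely the first-order expansion $\Psi_{1}^{\pm1}(x_{1})-\Psi_{1}^{\pm1}(x_{2})=x_{1}-x_{2}+O(|x_{1}-x_{2}|\max\{|x_{1}|^{1/2},|x_{2}|^{1/2}\})$ claimed in the statement (here I use that $z$ corresponds to $x=0$ in the chosen chart; the stated $\Psi_1(x)=x$ is the normalization $\Psi_1(z)=z$).

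Next I would define $\Psi_{2}$ not by a second application of the theorem but by transporting $\Psi_{1}$ across one step of $F$: set $\Psi_{2}:=F\circ\Psi_{1}\circ (D_zF)^{-1}$ on a small enough neighborhood of $w$. This is automatically a $C^{1,1/2}$ diffeomorphism with $\Psi_2(w)=w$, and it satisfies $\Psi_{2}^{-1}\circ F\circ\Psi_{1}=D_zF$ by construction. To get the companion identity $\Psi_{1}^{-1}\circ F\circ\Psi_{2}=D_wF$, I would check that $\Psi_{1}$ also conjugates $F^{2}$ at $w$-side data correctly: compute
\[
\Psi_{1}^{-1}\circ F\circ\Psi_{2}
=\Psi_{1}^{-1}\circ F^{2}\circ\Psi_{1}\circ (D_zF)^{-1}
=D_zF^{2}\circ (D_zF)^{-1}=D_wF,
\]
using $D_zF^{2}=D_wF\,D_zF$ (which holds since $F^{2}=F\circ F$ and the cocycle is linearized) together with $\Psi_{1}^{-1}\circ F^{2}\circ\Psi_{1}=D_zF^{2}$. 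So the two conjugation relations are in fact equivalent once $\Psi_{2}$ is defined this way, and I only need the single linearization at $z$. The $C^{1}$-closeness $\|\Psi_{2}^{\pm1}-\mathrm{Id}\|_{C^{1}}\le\eps$ and the analogous first-order expansion for $\Psi_{2}$ follow from those for $\Psi_{1}$, since $F$ and $D_zF$ are $C^{2}$ and agree to first order along $\gamma^{*}$, so composing with them preserves a $C^{1,1/2}$ bound of the same quality (possibly after enlarging the implicit constant and shrinking the neighborhood, which is harmless).

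The main obstacle, and the only nontrivial input, is the regularity assertion: ordinary $C^{1}$ linearization of a planar saddle is classical, but one needs the refined statement that the conjugacy can be chosen in $C^{1,1/2}$ with the uniform first-order remainder control — this is exactly the content of Stowe's theorem (the Hölder exponent $1/2$ being what one can guarantee without nonresonance conditions on the eigenvalues), so I would simply cite it rather than reprove it; no resonance issue arises here because we only ask for $C^{1}$-with-Hölder-derivative regularity, not smooth linearization. The remaining work — verifying that transporting by one step of $F$ does not degrade the regularity class and that the two commutation identities match up — is the routine bookkeeping sketched above. Everything else (hyperbolicity of $D_zF^2$, $D_wF^2$, the fact that $\gamma^{*}$ avoids the singular set) has already been established in Section~\ref{sec: hyperbolic}.
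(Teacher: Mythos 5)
The paper offers no proof of this lemma at all: it is introduced as a ``well known result'' with references to Stowe \cite{Stowe86} and Zhang--Zhang \cite{ZZ14}, so there is nothing to compare against line by line, and your job was really to supply the missing derivation of the period-two version from the fixed-point version. Your proposal does this correctly and by the natural route: apply the cited $C^{1,\frac12}$ fixed-point linearization to $F^2$ at $z$ (hyperbolic by Section~\ref{sec: hyperbolic}, and $C^2$ since $\Gamma_1,\Gamma_2$ are $C^3$ and $\gamma^*$ avoids the gluing points), normalize $D\Psi_1(z)=\mathrm{Id}$, and transport by $\Psi_2:=F\circ\Psi_1\circ(D_zF)^{-1}$. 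The verification of the two conjugation identities via $D_zF^2=D_wF\,D_zF$ and $D_wF^2=D_zF\,D_wF$ is right. One step worth writing out, since it looks wrong at first glance: $D\Psi_2(w)=D_zF\cdot D\Psi_1(z)\cdot(D_zF)^{-1}=\mathrm{Id}$, where the derivative of $F$ in the chain rule is evaluated at $z=\Psi_1\bigl((D_zF)^{-1}(w)\bigr)$, \emph{not} at $w$; this is why $C^1$-closeness to the identity survives the transport even though $D_wF(D_zF)^{-1}\neq\mathrm{Id}$ in general (the curvatures at $A$ and $B$ need not agree). Likewise the remainder estimate for $\Psi_2^{\pm1}$ follows because $F-D_zF$ is $C^2$ and vanishes to second order at $z$, so it only contributes an $O(|y_1-y_2|\max\{|y_1|,|y_2|\})$ error, better than the required H\"older term. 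The only unproved input in your argument --- that a $C^2$ planar saddle admits a $C^{1,\frac12}$ linearizer with the stated first-order expansion --- is exactly the input the paper itself takes on citation, so the proposal is at the same level of rigor as, and strictly more detailed than, what the paper provides.
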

\medskip

For $i=1, 2$, we further choose the following invertible matrices
\beq\label{def Theta}
\Theta_i =
\begin{pmatrix}
\ \ \cos\theta_i & \cos\theta_i \\
-\sin\theta_i & \sin\theta_i
\end{pmatrix},
\eeq
and introduce a local coordinate system inside $U_1\cup U_2$ such that
\beq \label{coordinate-change}
\begin{pmatrix} s \\ \varphi \end{pmatrix}=
\Psi_1 \circ \Theta_1 \begin{pmatrix} \xi \\ \eta \end{pmatrix},
\ \text{and}
\begin{pmatrix} t \\ \psi \end{pmatrix}=
\Psi_2 \circ \Theta_2\begin{pmatrix} \zeta \\ \iota \end{pmatrix}.
\eeq
By Lemma~\ref{lem: linearization}, if $z=(\xi, \eta)\in U_1$ and $F(z)=(\zeta, \iota)\in U_2$,
then
\beq\label{linearization 2}
\zeta=\lambda_{1, u} \xi, \ \ \  \text{and} \ \ \  \iota=\lambda_{1, s} \eta.
\eeq
Similarly, if $w=(\zeta, \iota)\in U_2$ and 
$F(w)=(\xi, \eta)\in U_1$, then
\beq\label{linearization 2'}
\xi=\lambda_{2, u} \zeta, \ \ \  \text{and} \ \ \  \eta=\lambda_{2, s} \iota.
\eeq
Also, if there are $0\le m\le m'$ such that
$F^{2k}(z)=(\xi_k, \eta_k)\in U_1$ and $F^{2k}(w)=(\zeta_k, \iota_k)\in U_2$
for any $k\in [m, m']$,  then
\beq\label{linearization 1}
\begin{aligned}
\xi_{k}=\lambda^{k-m} \xi_{m}, \quad  & \quad \eta_{k}=\lambda^{-k+m} \eta_m;
\\ \
\zeta_{k}=\lambda^{k-m} \zeta_m, \quad & \quad \iota_{k}=\lambda^{-k+m} \iota_m.
\end{aligned}
\eeq

\bigskip

\section{Analysis of Palindromic Periodic Orbits $\gamma_n$}
\label{sec:palindromic}

\subsection{The Palindromic Periodic Orbits $\gamma_{n}$}
\label{sec: palin}

Let $\Omega$ be a Bunimovich squash-type stadium  in $\cM^m_{ss}$ for $m\ge 3$.
We study palindromic periodic orbits, namely, periodic orbits such that
the associated trajectory hits the billiard table perpendicularly at two `turning' points. 
For any integer $n\geq 1$, we consider the   palindromic
periodic orbits $\gamma_n$ associated with the following symbolic codes:
\beq\label{def palin}
(323\underbrace{12 \cdots 121}_{2n+1}).
\eeq
The period of $\gamma_{n}$ is equal to $2n+4$. Furthermore, $\gamma_n$ is
palindromic as we track the motion of a billiard ball
along this orbit (see Fig. \ref{fig:palin}):
\begin{itemize}
\item The `initial' stage:  from an initial position on $\Gamma_3$, a billiard ball
hits perpendicularly at $\Gamma_2$ and then returns to the initial position;
\item The `successive collision' stage:
the billiard ball moves from $\Gamma_3$ towards $\Gamma_1$, and
collides successively between $\Gamma_1$ and $\Gamma_2$ for $(2n+1)$ times,
and then gets back to the initial position on $\Gamma_3$.
Note that it hits $\Gamma_i$ perpendicularly at the $(n+1)$-th collision,
where $i=1$ is $n$ is even, and $i=2$ if $n$ is odd.
\end{itemize}

\begin{figure}[h]
\begin{center}
\includegraphics[width=.9\textwidth]{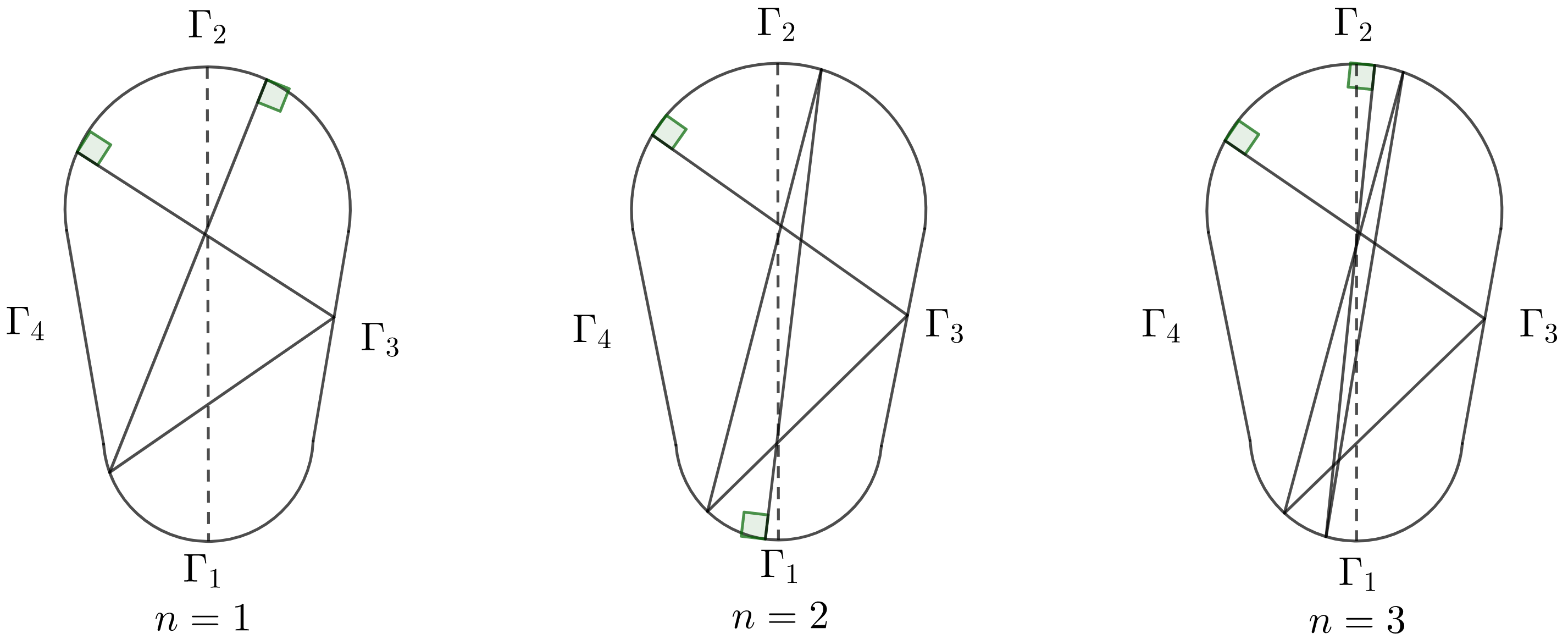}
\end{center}
\caption{The palindromic orbit $\gamma_{n}$ with $n=1, 2, 3$}
\label{fig:palin}
\end{figure}

We first need to show  the existence and uniqueness of such orbits.

\begin{lemma}\label{lem: gamma_n uniq}
For any $n\ge 1$, there exists a unique palindromic
periodic orbit $\gamma_n$ associated with the symbolic sequence \eqref{def palin}.
\end{lemma}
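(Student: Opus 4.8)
## Proof Strategy for Lemma~\ref{lem: gamma_n uniq}

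The plan is to realize the palindromic orbit $\gamma_n$ as a critical point of an appropriate length functional, and to obtain existence and uniqueness by showing this functional is strictly concave on a suitable compact domain, exactly as in the proof of Lemma~\ref{lem: gamma uniq} for the period two orbit. Concretely, I would work in the unfolded (double cover) table $\widetilde\Omega$ along $\Gamma_3$, where the palindromic orbit with code $(323\underbrace{12\cdots 121}_{2n+1})$ unfolds into a genuine geodesic-type billiard trajectory that hits the boundary perpendicularly at its two ``turning'' points. The two turning points are: a point on $\Gamma_2$ (from the initial stage $32$) and a point on $\widetilde\Gamma_1$ or $\widetilde\Gamma_2$ (at the $(n+1)$-th successive collision, depending on the parity of $n$, by the description in the text). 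Because the orbit is palindromic, it is determined by a half-orbit running between these two perpendicular footpoints, and its length is twice the length of that half.

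The key steps, in order, are as follows. First, I would set up the configuration-space functional: parametrize the (half-)orbit by the arclength positions $r_0 \in \Gamma_3$, $r_1,\dots,r_{2n+1}$ along the alternating arcs $\Gamma_1,\Gamma_2$ (in the unfolded picture), so that the relevant length is $\mathcal{L}(r_0,\dots,r_{2n+1}) = \sum \tau(r_j,r_{j+1})$ with the appropriate reflection across $\Gamma_3$ built in. By \eqref{tau dr}, critical points of $\mathcal{L}$ correspond exactly to billiard trajectories (Snell reflection at each interior vertex, perpendicular incidence at the two free endpoints on $\Gamma_2$ and on $\Gamma_1$ or $\Gamma_2$), hence to palindromic orbits of the prescribed combinatorial type. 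Second, I would compute the Hessian of $\mathcal{L}$ using \eqref{tau dr 2}: it is tridiagonal, with diagonal entries of the form $\cK_j\cos\varphi_j + \cos^2\varphi_j/\tau_j + \cos^2\varphi_j/\tau_{j-1}$ (the curvature of $\Gamma_3$ being zero at the flat vertex, with only the $\cos^2/\tau$ terms surviving there) and off-diagonal entries $\cos\varphi_j\cos\varphi_{j+1}/\tau_j$. Third, using the (doubly) defocusing inequality $\cK_j\tau_j < -2\cos\varphi_j$ on every focusing arc segment — here exactly where condition (I)/(II) and the bound $\tau \ge 2\rho_0 d$ enter — I would show the Hessian is strictly negative definite, by the same diagonal-dominance-type estimate used in Lemma~\ref{lem: gamma uniq}: each diagonal entry is more negative than the sum of the absolute values of the off-diagonal entries in its row. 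This gives strict concavity of $\mathcal{L}$, hence at most one critical point. Fourth, for existence, I would take a global maximizer of $\mathcal{L}$ on the compact product of boundary arcs and rule out boundary (gluing-point) maxima by the same first-order argument as before: near a gluing point $\Gamma_i\cap\Gamma_j$ the arc is $C^1$-tangent to the flat side, so $\tau(r+\Delta r,\cdot) = -\sin\varphi\,\Delta r + \cO(\Delta r^2)$ with $\varphi$ and $\Delta r$ of opposite sign, forcing the value to increase as one moves into the interior — a contradiction to maximality at the gluing point. Thus the maximizer is interior, hence the unique critical point, hence the unique palindromic orbit $\gamma_n$.

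The main obstacle I anticipate is bookkeeping the negative-definiteness of the Hessian uniformly in $n$ in the presence of the \emph{flat} turning structure and the reflection across $\Gamma_3$. The vertex on $\Gamma_3$ contributes a row with zero curvature, so strict concavity there relies purely on the $\cos^2\varphi/\tau$ terms and must be handled with care (one must check that the two incident free paths meeting $\Gamma_3$ still give a strictly dominant diagonal); and one must confirm that the defocusing bound $\tau_j \ge 2\rho_0 d$ is available on \emph{every} relevant segment, which is precisely why the doubly defocusing hypothesis (item (II)) is needed rather than plain defocusing. A secondary technical point is checking that the symbolic code $(323\,12\cdots121)$ genuinely forces the trajectory to be palindromic — i.e., that a critical point of the half-orbit functional closes up correctly under the reflection symmetry — which follows from the perpendicular incidence at the two endpoints together with uniqueness, but should be stated explicitly. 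Everything else is a routine adaptation of the period-two argument already carried out in Section~\ref{sec: uniqueness}.
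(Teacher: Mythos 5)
Your overall strategy --- realize $\gamma_n$ as the unique critical point of a strictly concave length functional, following Lemma~\ref{lem: gamma uniq} --- is the right one and is essentially the paper's. But the specific functional you set up has a fatal flaw: you keep the collision point $r_0$ on the flat wall $\Gamma_3$ as a free variable. By \eqref{tau dr 2}, the diagonal Hessian entry at a vertex of curvature $\cK_j$ adjacent to the free paths $\tau_{j-1}$ and $\tau_j$ is $2\cK_j\cos\varphi_j+\cos^2\varphi_j\left(1/\tau_{j-1}+1/\tau_j\right)$; on $\Gamma_3$ one has $\cK_j=0$, so this entry equals $\cos^2\varphi_j\left(1/\tau_{j-1}+1/\tau_j\right)>0$. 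A symmetric matrix with a positive diagonal entry cannot be negative definite, so no diagonal-dominance estimate can rescue strict concavity: the length functional is strictly \emph{convex} in the flat-wall variable (the reflection point on a straight mirror \emph{minimizes} path length), and the orbit is a saddle of your functional, not a maximum. You flag this vertex as something to ``handle with care,'' but as written the step simply fails, and with it the ``strict concavity $\Rightarrow$ at most one critical point'' conclusion.

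The paper resolves exactly this by carrying the unfolding all the way through: in the double cover $\widetilde\Omega$ the crossing of $\Gamma_3$ is not a collision at all. The initial stage $(3\,2\,3)$ is replaced by a single collision at $r_0\in\widetilde\Gamma_2$, the reflection across $\Gamma_3$ of the perpendicular foot $\overline r_0\in\Gamma_2$, and one maximizes $L=\sum_k\widetilde\tau(r_k,r_{k+1})$ over the closed $(2n+2)$-cycle on $\widetilde\Gamma_2\times\left(\Gamma_1\times\Gamma_2\right)^{n}\times\Gamma_1$. Every variable then lies on a strictly convex arc and every segment satisfies the defocusing inequality --- this is precisely where the \emph{doubly} defocusing hypothesis is invoked for the unfolded segments through $\Gamma_3$ --- so negative definiteness and the interior-maximum argument go through verbatim. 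Palindromicity is then deduced a posteriori from the reversal symmetry $L(r_0,r_1,\dots,r_{2n+1})=L(r_0,r_{2n+1},\dots,r_1)$ together with uniqueness of the critical point, rather than from a half-orbit construction. Your argument can be repaired either by adopting this formulation, or by first minimizing over $r_0\in\Gamma_3$ (a strictly convex one-dimensional problem whose minimizer is the unfolded straight-line crossing point) and then maximizing the reduced functional; both amount to the same thing, but neither is what your Hessian computation describes.
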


\begin{proof}
Let $\widetilde\Omega$ be the double cover table of $\Omega$  (see Fig.~\ref{fig:stadia}, right),
and let $\widetilde\Gamma_i$ be the new boundaries of $\widetilde\Omega$, which corresponds
to the code $\widetilde i$ for $i=1, 2, 4$.
Then $(r_0^- \overline r_0 r_0^+ r_1 \dots r_{2n+1})$ forms a periodic orbit in $\Omega$
associated with \eqref{def palin}, if and only if
$(r_0 r_1 \dots r_{2n+1})$ forms a periodic orbit in $\widetilde\Omega$
associated with
\beq\label{def palin double}
(\widetilde 2 \underbrace{12 \cdots 121}_{2n+1}),
\eeq
where $r_0$ is the symmetric point of $\overline r_0$ with respect to $\Gamma_3$.
To this end, we recall that $\widetilde\tau(\cdot, \cdot)$ is the free path in $\widetilde\Omega$,
and consider the length function
\beqn
L(r_0, r_1, \dots,  r_{2n+1})
=\sum_{k=0}^{2n+1} \widetilde\tau(r_k, r_{k+1}),
\eeqn
for $(r_0, r_1, \dots,  r_{2n+1})\in
\widetilde\Gamma_2 \times \left(\Gamma_1 \times \Gamma_2 \right)^{n} \times \Gamma_1$,
where we set $r_{2n+2}=r_0$.
Let $\varphi(r_k, r_{k+1})$ (resp.  $\varphi_1(r_k, r_{k+1})$) be the
angle formed by the vector $\overline{r_k r_{k+1}}$
and the inward (resp. outward) inner normal vector at $r_k$ (resp. at $r_{k+1}$).
By \eqref{tau dr},
\beqn
\frac{\p L}{\p r_k}=\frac{\p \widetilde\tau}{\p r_k}(r_{k-1}, r_k) + \frac{\p \widetilde\tau}{\p r_k}(r_{k}, r_{k+1})
=\sin\varphi_1(r_{k-1}, r_k) -\sin\varphi(r_k, r_{k+1}).
\eeqn
It follows that $(r_0, r_1, \dots, r_{2n+1})$ forms a periodic orbit, if and only if
it is a critical point of $L$. Similar to the proof of Lemma~\ref{lem: gamma uniq}, the Hessian matrix
of $L$ is negative definite due to the doubly focusing mechanism.
Thus, $L$ is a strictly concave function on the compact domain
$\widetilde\Gamma_2 \times \left(\Gamma_1 \times \Gamma_2 \right)^{n} \times \Gamma_1$,
and there can be at most one critical point for $L$.

On the other hand, $L$ has a global maximum point $\widetilde\gamma_n=(r_0, r_1, \dots, r_{2n+1})$
which must be an interior point. Therefore, $\widetilde\gamma_n$ is the only critical point of $L$,
which forms a periodic orbit in $\widetilde\Omega$ associated with \eqref{def palin double}.
Noticing that
\beqn
L(r_0, r_1, r_2, \dots, r_{n+1}, \dots, r_{2n}, r_{2n+1})=
L(r_0, r_{2n+1}, r_{2n}, \dots, r_{n+1}, \dots, r_{2}, r_{1}),
\eeqn
we further get $r_{n+2-k}=r_k$ for $1\le k\le n$, that is, the orbit $\widetilde\gamma_n$ is palindromic.
Finally, we obtain the unique periodic orbit $\gamma_n:=(r_0^- \overline r_0 r_0^+ r_1 \dots r_{2n+1})$ in $\Omega$
associated with \eqref{def palin}, where $\overline r_0$ is the symmetric point of $r_0$ with respect to $\Gamma_3$,
and $r_0^\pm$ is obtained as the intersection between $\overline{r_0 r_1}$ and  $\Gamma_3$.
\end{proof}

\begin{comment}

Switching the roles of $\Gamma_1$ and $\Gamma_2$, we obtain another
sequence of palindromic periodic orbits $\hgamma_n$  associated with the following symbolic codes:
\beq\label{def palin1}
(313\underbrace{212 \cdots 12}_{2n+1}).
\eeq
The dynamical behavior and delicate estimates for $\hgamma_n$
would be very similar to those for $\gamma_n$. For the rest of this section,
we shall concentrate on the orbits $\gamma_n$.

\end{comment}

\subsection{The Homoclinic Semi-orbit $\gamma_\infty$}\label{sec: homoclinic}

We denote the collision points of $\gamma_{n}$ by
\begin{align*}
y_{n}(0^-) \mapsto x_n(0) \mapsto y_n(0) \mapsto
x_{n}(1) \mapsto y_{n}(1)  \mapsto \dots \mapsto x_{n} (n) 
\mapsto y_{n}(n)  \mapsto x_{n} (n+1),
\end{align*}
where 
\begin{itemize}
\item
at the initial stages corresponding to the codes $(323)$, 
we denote by $x_n(0)=(s_n(0), 0)$
the collision point on $\Gamma_2$, and denote by
$y_n(0)=(t_n(0), \psi_n(0))$ and $y_{n}(0^-)=(t_n(0), -\psi_{n}(0))$
the two collision points on $\Gamma_3$.\footnote{
For convenience, we extend the $(s, \varphi)$- and $(t, \psi)$-coordinates
on the full boundary $\p\Omega$. Also,
even though $x_n(0)$ lies on $\Gamma_2$,  
we still use $(s, \varphi)$-coordinate to maintain the bouncing ordering. }
\item 
at the stage of $2n+1$ successive collisions between $\Gamma_1$ and $\Gamma_2$,
we denote
\begin{align*}
\text{on}\ \Gamma_1: \ x_{n}(k) & =(s_{n}(k), \varphi_{n}(k)), \ k=1, 2, \dots, n, n+1; \\
\text{on}\ \Gamma_2: \ y_{n}(k) & =(t_{n}(k), \psi_{n}(k)), \ k=1, 2, \dots, n.
\end{align*}
\end{itemize}

By time reversibility, we have
\beq\label{time-rev s}
\begin{split}
s_{n}(n+2-k)=s_{n}(k), \ & \ \varphi_{n}(n+2-k)=-\varphi_{n}(k), \ k=1, 2, \dots, n+1,   \\
t_{n}(n+1-k)=t_{n}(k), \ & \ \psi_{n}(n+1-k)=-\psi_{n}(k),  \ k=1, 2, \dots, n.
\end{split}
\eeq
In particular, $\varphi_{n}(\frac{n+2}{2})=0$ if $n$ is even, and
$\psi_{n}(\frac{n+1}{2})=0$ is $n$ is odd.

\medskip

We first provide some rough estimates.

\begin{lemma}\label{lem: rough esta}
There exists $C>0$ such that for any $n\ge 1$ and $m\ge 0$,
\beqn
\begin{split}
\|x_{n+m}(k)-x_{n}(k)\| &\le  C\Lambda^{2k-2n},
\ \ \ \ \ k=0, 1, \dots, n+1,
\\
\|y_{n+m}(k)-y_{n}(k)\| &\le  C\Lambda^{2k-2n},
\ \ \  \ \ k=0^-, 0,  1, \dots, n.
\end{split}
\eeqn
where the constant $\Lambda>1$ is given by \eqref{def Lambda}, and
$\|\cdot\|$ denotes the Euclidean norm in the phase space $M$.
\end{lemma}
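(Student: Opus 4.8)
The plan is to establish the estimates by the same wave front / cone contraction mechanism used in Lemma~\ref{lem: rough est1}, now adapted to the palindromic orbit $\gamma_n$ and its double cover description $\widetilde\gamma_n$ from Lemma~\ref{lem: gamma_n uniq}. Working in the double cover table $\widetilde\Omega$, the orbit $\widetilde\gamma_n$ associated with $(\widetilde 2\,\underbrace{12\cdots 121}_{2n+1})$ hits $\widetilde\Gamma_2$ perpendicularly at $r_0$ and hits one of $\Gamma_1,\Gamma_2$ perpendicularly at the middle collision $r_{(n+1)/2}$ or $r_{(n+2)/2}$ (depending on the parity of $n$), exactly as recorded in \eqref{time-rev s}. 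So $\widetilde\gamma_n$ plays the role that the orbit $\gamma_n^{(i)}$ played in Lemma~\ref{lem: rough est1}: it is a finite orbit segment pinned by two perpendicular `turning' points, spending $2n$ consecutive reflections alternating between $\Gamma_1$ and $\Gamma_2$, to which the uniform expansion estimate \eqref{def Lambda}--\eqref{wave contraction} applies at every such reflection.

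First I would prove the single-$n$ decay bound $\|x_n(k)\|,\|y_n(k)\|\le C\Lambda^{-\min(k,n+1-k)}$ (and similarly indexed from the other turning point), by attaching to the turning point a pre-collisional wave front along a short boundary arc and pushing it through the successive collisions: each step multiplies the $p$-length by at least $\Lambda$ by \eqref{wave contraction}, while $\cos\varphi$ stays bounded away from $0$ along $\gamma^*$'s neighborhood, so the Euclidean displacement of the $k$-th collision from the base point of $z$ or $w$ is controlled by the displacement of the middle collision times $\Lambda^{-(\text{distance to middle})}$; since the middle collision itself sits at one of the base points of $\gamma^*$ up to $O(\Lambda^{-n})$, this yields the $\Lambda^{-k}$-type bound near the ends. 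Then for the Cauchy estimate I would compare $\widetilde\gamma_{n}$ and $\widetilde\gamma_{n+m}$: both are pinned perpendicularly at $r_0\in\widetilde\Gamma_2$ (by \eqref{time-rev s}), so attaching a \emph{flat} pre-collisional wave front to the boundary arc joining their first collisions on $\Gamma_1$, and propagating it through the next collisions, gives $\|x_{n+m}(k)-x_n(k)\|\ge C'\Lambda^{n-k}\|x_{n+m}(n)-x_n(n)\|$ near the middle; combined with the already-established $\|x_n(n)\|,\|x_{n+m}(n)\|\le C\Lambda^{-n}$ this produces $\|x_{n+m}(k)-x_n(k)\|\le C\Lambda^{2k-2n}$, and likewise for the $y$-collisions (including the $\Gamma_3$-collisions $y_n(0^-),y_n(0)$, which are determined by intersecting $\overline{r_0 r_1}$ with $\Gamma_3$, hence inherit the bound from $k=0,1$). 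One must check the transition across $\Gamma_3$ in the double cover uses hypothesis (II) (doubly defocusing) rather than (I), but this is exactly what the double cover construction and condition (II) were set up to handle.

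The main obstacle, as in Lemma~\ref{lem: rough est1}, is the bookkeeping at the endpoints: one must verify that the middle and terminal collisions of $\gamma_n$ really do lie at (or exponentially near) the base points of $\gamma^*=\overline{zw}$ so that the wave front being propagated has the claimed small initial size, and that the convexity/uniqueness from Lemma~\ref{lem: gamma_n uniq} legitimately lets us treat $\gamma_n$ and $\gamma_{n+m}$ as nested orbit segments sharing the turning point $r_0$. Once the geometry at the turning points is pinned down, the estimate is a routine iteration of \eqref{wave contraction} together with the time-reversibility relations \eqref{time-rev s}, exactly mirroring the $\gamma_n^{(i)}$ argument; so I would present this proof by reduction to Lemma~\ref{lem: rough est1}'s method, emphasizing only the two new points (the perpendicular pinning at $r_0$ and the use of (II) across $\Gamma_3$) and leaving the repeated cone-expansion computation to the reader.
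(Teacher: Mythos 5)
Your overall mechanism --- join the two perpendicular initial collisions $x_n(0)=(s_n(0),0)$ and $x_{n+m}(0)=(s_{n+m}(0),0)$ on $\Gamma_2$ by a flat unstable wave front, push it forward, and use the uniform expansion \eqref{wave contraction} (with case (II), doubly defocusing, for the transit across $\Gamma_3$) --- is exactly the paper's. But the quantitative step you use to close the argument fails as written. First, the anchor point is misidentified: you import from Lemma~\ref{lem: rough est1} the bound ``$\|x_n(n)\|,\|x_{n+m}(n)\|\le C\Lambda^{-n}$'' and call $x_n(n)$ a point ``near the middle.'' In the indexing of Section~\ref{sec:palindromic} the collision $x_n(n)$ is near the \emph{end} of the successive-collision stage: the palindromic symmetry \eqref{time-rev s} gives $s_n(n)=s_n(2)$, so $x_n(n)$ stays a bounded distance from the base point of $\gamma^*$ as $n\to\infty$; indeed your own first step yields only $\|x_n(n)\|\le C\Lambda^{-\min(n,\,n+1-n)}=C\Lambda^{-1}$, contradicting the $\Lambda^{-n}$ you later invoke. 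The collision that is $\Lambda^{-n}$-close to $\gamma^*$ is the turning point at index $\approx(n+2)/2$. (In Lemma~\ref{lem: rough est1} the index runs symmetrically about the turning point, so there index $n$ really is deep inside; that convention does not transfer here.) Second, your expansion inequality is stated backwards: forward propagation of an unstable front gives $\|x_{n+m}(n)-x_n(n)\|\ge C'\Lambda^{2(n-k)}\|x_{n+m}(k)-x_n(k)\|$, i.e.\ the difference grows toward the later index. Even after correcting the direction and re-anchoring at the midpoint you would still need a separate argument for indices $k$ past the midpoint, since expansion only bounds earlier differences by later ones.

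The paper avoids all of this. It never needs the single-orbit decay of your first step, and it anchors the comparison at the terminal iterate $F^{2n+1}(W)$, whose $p$-length is bounded simply by a constant times $\mathrm{diam}(M)$. Then $|F^{2k}(W)|_p\le\Lambda^{-(2n+1-2k)}|F^{2n+1}(W)|_p$, together with the uniform angle bound making the Euclidean and $p$-norms comparable along these orbits, yields $\|x_{n+m}(k)-x_n(k)\|\le C\Lambda^{2k-2n}$ for every $k$ simultaneously, including $k$ near $n+1$ where the bound is weak but still required. If you replace your anchoring step by this one, the remainder of your outline (the flat wave front at the perpendicular hits on $\Gamma_2$, the use of (II) across $\Gamma_3$, and the treatment of $y_n(0^-),y_n(0)$ as the $F$- and $F^{2n+3}$-images) is correct and coincides with the paper's proof.
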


\begin{proof}
Recall that the billiard ball hits $\Gamma_2$ perpendicularly in the initial stage of $\gamma_n$
and $\gamma_{n+m}$, that is, $x_n(0)=(s_n(0), 0)$ and $x_{n+m}(0)=(s_{n+m}(0), 0)$.
Let $W$ be  the wave front between $s=s_n(0)$ and $s=s_{n+m}(0)$, associated with zero angles.
Then $W$ is an unstable curve, so is its forward iterate $F^k(W)$ for any $k\ge 0$,
unless $F^k(W)$ is cut by singularities, i.e., $F^k(W)$ hits the gluing points.

Following the trajectories of $\gamma_{n}$ and $\gamma_{n+m}$, it is not hard to
see that $F^k(W)$ stays away from the singularities for $0\le k\le 2n+1$.
Furthermore, the transition from $F^k(W)$ to $F^{k+1}(W)$ is between $\Gamma_1$ and $\Gamma_2$,
for $2\le k\le 2n+1$, then by \eqref{wave contraction},
we have the following estimates for the $p$-length of $F^k(W)$:
$$
|F^{k+i}(W)|_p \ge \Lambda^i |F^k(W)|_p, \ \ 2\le k\le k+i\le 2n+2.
$$
At Step $k=0$, $W$ goes from $\Gamma_2$ and hits the flat wall $\Gamma_3$,
and then at Step $k=1$, it collides on $\Gamma_1$. Thus,
$|F^2(W)|_p\ge \Lambda |W|_p$.

Note that there is $\varphi_0\in (0, \pi/2)$ such that $|\varphi|\le \varphi_0$ for
any point $z=(r, \varphi)$ lying on $\Gamma_1$ (resp. on $\Gamma_2$)
satisfying the following properties:
\begin{itemize}
\item $F^{-1}(z)$ lies on $\Gamma_2$ (resp. on $\Gamma_1$), or
$F^{-1}(z)$ lies on $\Gamma_3$ but $F^{-2}(z)$ lies on $\Gamma_2$  (resp. on $\Gamma_1$);
\item $F(z)$ lies on $\Gamma_2$ (resp. on $\Gamma_1$), or
$F(z)$ lies on $\Gamma_3$ but $F^{2}(z)$ lies on $\Gamma_2$  (resp. on $\Gamma_1$).
\end{itemize}
Therefore, the angle variables on all the curves $\{F^k(W)\}_{k\ge 0}$ have absolute value bounded by $\varphi_0$.
Hence there is $C_0>0$ such that for any $k\ge 0$,
$$
|F^k(W)|/C_0 \le |F^k(W)|_p\le C_0 |F^k(W)|.
$$
Set $C=C_0^2 \diam(M)$.
Note that $x_n(0)$ and $x_{n+m}(0)$ are the endpoints of $W$, then
\begin{align*}
\|x_{n+m}(0)-x_n(0)\|=
|W|\le C_0 |W|_p  \le C_0 \Lambda^{-2n} |F^{2n+1}(W)|_p 
& \le C_0^2 \Lambda^{-2n} |F^{2n+1}(W)| \\
& \le C\Lambda^{-2n}.
\end{align*}
The remaining estimates in this lemma can be shown in a similar fashion,
by noticing the following facts: 
$x_n(k)$ and $x_{n+m}(k)$ are endpoints of $F^{2k}(W)$ for $1\le k\le n+1$;
$y_n(k)$ and $y_{n+m}(k)$ are endpoints of $F^{2k+1}(W)$ for $0\le k \le n $. 
\end{proof}

By Lemma~\ref{lem: rough esta}, we define
\beq\label{def ga infty}
\begin{split}
x_\infty(k)&=\lim_{n\to\infty} x_n(k), \ k= 0, 1, 2, \dots; \\
y_\infty(k)&=\lim_{n\to\infty} y_n(k), \ k= 0^-, 0,  1, 2, \dots.
\end{split}
\eeq
Then we obtain a semi-orbit
\beqn
\gamma_\infty:=(y_\infty(0^-) \ x_\infty(0) \ y_\infty(0) \ x_\infty(1) \ y_\infty(1) \dots \ x_\infty(k) \ y_\infty(k) \dots),
\eeqn
which corresponds to the symbolic code
$(323 121212 \cdots)$.
The following lemma shows that $\gamma_\infty$ is a homoclinic semi-orbit of
the period two orbit $\gamma^*=\overline{xy}$.

\begin{lemma}\label{lem: homoclinic}
$\lim\limits_{k\to\infty} x_\infty(k) = x$, and $\lim\limits_{k\to\infty} y_\infty(k) = y$.
\end{lemma}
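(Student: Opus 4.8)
The plan is to show that the homoclinic semi-orbit $\gamma_\infty$ converges to the period two orbit $\gamma^* = \overline{zw}$ in forward time, which amounts to verifying that $x_\infty(k)\to x$ and $y_\infty(k)\to y$ as $k\to\infty$. First I would recall from the construction that $x_n(k)$ and $y_n(k)$ are endpoints of $F^{2k}(W)$ and $F^{2k+1}(W)$ respectively, where $W$ is the unstable curve between $s=s_n(0)$ and $s=s_{n+m}(0)$ used in Lemma~\ref{lem: rough esta}. For the limiting semi-orbit we can instead look directly at the trajectory of $\gamma_\infty$: by construction it follows the symbolic code $(323\,121212\cdots)$, so for $k\ge 1$ each step in the tail is an alternating bounce between $\Gamma_1$ and $\Gamma_2$, which is exactly the situation covered by the $p$-contraction estimate \eqref{wave contraction} and the uniform expansion factor $\Lambda>1$ from \eqref{def Lambda}.

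The key quantitative step is to produce a bona fide unstable curve through the points $x_\infty(k)$ whose $p$-length is positive and which shrinks at the right rate. Concretely, I would take the rough estimates from Lemma~\ref{lem: rough esta}: since $\|x_{n+m}(k)-x_n(k)\|\le C\Lambda^{2k-2n}$ for $0\le k\le n+1$, for each fixed $k$ the sequence $\{x_n(k)\}_n$ is Cauchy, and the same holds for $\{y_n(k)\}_n$; passing to the limit gives $\|x_\infty(k)\| = \lim_n \|x_n(k)\|$ and similarly for $y$. Now for a fixed large $k_0$, apply Lemma~\ref{lem: rough esta} again with the roles reversed: running the trajectory of $\gamma_n$ \emph{backward} from the perpendicular-hit collision $x_n(\tfrac{n+2}{2})$ (or $y_n(\tfrac{n+1}{2})$) toward $x_n(0)$, the $p$-length of the wave front doubles each alternating bounce, so $\|x_n(k)\|$ for $k$ well below $n/2$ is bounded by $\Lambda^{-k}$ times a bounded quantity. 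Taking $n\to\infty$ with $k$ fixed yields $\|x_\infty(k)\|\le C'\Lambda^{-k}$ and $\|y_\infty(k)\|\le C'\Lambda^{-k}$, which forces $x_\infty(k)\to(0,0)=x$ and $y_\infty(k)\to(r_2,0)=y$ in the $(s,\varphi)$ and $(t,\psi)$ coordinates.

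More precisely, I would argue as follows. Fix $k\ge 1$ and let $W$ be the unstable wave front between $x_{2k}(0)$ and $x_{2k+1}(0)$ on $\Gamma_2$ with zero angle; then $F^{2j}(W)$ has endpoints $x_{2k}(j)$ and $x_{2k+1}(j)$ for $0\le j\le 2k+1$, and the perpendicularity relations \eqref{time-rev s} tell us that the midpoint collisions of $\gamma_{2k}$ and $\gamma_{2k+1}$ both lie on the segment $\overline{AB}$. Hence $|F^{2j}(W)|_p$ at the midpoint is at most $\diam(M)$, while \eqref{wave contraction} gives $|F^{2j}(W)|_p \ge \Lambda^{j-1}|F^2(W)|_p$. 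Combining, $|F^2(W)|_p \le C\Lambda^{-k}$, so $\|x_\infty(k)-x\| = \lim_n |s_n(k)| \le C_0|F^2(W)|_p \le C'\Lambda^{-k}$, and likewise for $y$; then send $k\to\infty$. The main obstacle I expect is bookkeeping: one must carefully track which collisions lie exactly on $\overline{AB}$ (this uses the palindromic symmetry \eqref{time-rev s} and depends on the parity of $n$), and one must confirm that the wave fronts $F^j(W)$ never hit the gluing points in the relevant range of $j$ — this is exactly the content of the singularity-avoidance claim established inside the proof of Lemma~\ref{lem: rough esta}, so it can be invoked, but the reader should be told why the limiting semi-orbit inherits it. Once the exponential decay $\|x_\infty(k)\|, \|y_\infty(k)\| = \cO(\Lambda^{-k})$ is in hand, the conclusion is immediate.
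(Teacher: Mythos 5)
Your overall strategy --- prove the exponential decay $\|x_\infty(k)-x\|=\cO(\Lambda^{-k})$ by expanding an unstable wave front forward and using that its image stays bounded --- is the same as the paper's, and your use of Lemma~\ref{lem: rough esta} to pass from $x_n(k)$ to $x_\infty(k)$ is fine. But the concrete argument in your final paragraph has a genuine gap. First, the claim that the midpoint collisions of $\gamma_{2k}$ and $\gamma_{2k+1}$ ``lie on the segment $\overline{AB}$'' is false: by \eqref{time-rev s} these collisions are perpendicular (zero angle), but a perpendicular collision on $\Gamma_1$ at a point other than $A$ launches the trajectory along the inward normal at \emph{that} point, which is not the line $\overline{AB}$. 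The midpoint collisions are merely \emph{close} to $A$ or $B$, and that closeness is precisely what has to be proved, not assumed. Second, and more seriously, the wave front $W$ you choose joins $x_{2k}(0)$ to $x_{2k+1}(0)$, so its iterates $F^{2j}(W)$ control the gaps $\|x_{2k}(j)-x_{2k+1}(j)\|$ between two successive approximants --- i.e.\ they reprove the Cauchy estimate already contained in Lemma~\ref{lem: rough esta} --- and say nothing about the distance from $x_\infty(k)$ to the period two point $x$. The concluding inequality $\|x_\infty(k)-x\|=\lim_n|s_n(k)|\le C_0|F^2(W)|_p$ is therefore a non sequitur: the left-hand side is not the endpoint separation of any iterate of your $W$. (Your informal middle paragraph has the same defect: it never specifies a wave front with one endpoint at $x$.)

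The missing ingredient is a wave front anchored at the period two point itself. Take $n$ of the parity for which the middle perpendicular collision of $\gamma_n$ lands on $\Gamma_1$, say at $(s_n(\tfrac{n+2}{2}),0)$, and join it to $x=(0,0)$ by the flat (zero-angle) wave front. Since both endpoints have zero angle this curve is unstable; its forward iterates follow the codes of $\gamma_n$ and of $\gamma^*$, both of which alternate $1,2,1,2,\dots$ for the next $\sim n$ bounces, so they avoid the gluing points, and \eqref{wave contraction} expands the $p$-length by a factor $\Lambda^{n}$ while the image remains bounded by $\diam(M)$. This yields $|s_n(\tfrac{n+2}{2})|=\cO(\Lambda^{-n})$, i.e.\ the middle perpendicular collision is exponentially close to $A$. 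Combining this with Lemma~\ref{lem: rough esta} (applied with $n$ of order $2k$ and $m\to\infty$) and the triangle inequality gives $\|x_\infty(k)-x\|=\cO(\Lambda^{-2k+2})$, which is exactly how the paper concludes; the statement for $y_\infty(k)$ follows the same way. Without this second, correctly anchored wave front, your estimates never touch $x$, so the convergence $x_\infty(k)\to x$ does not follow from what you wrote.
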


\begin{proof} For any $k\ge 1$, by Lemma~\ref{lem: rough esta},
we take $n=2k-1$ and let $m\to\infty$, then
\beqn
\|x_\infty(k)-x_{2k-1}(k)\|\le C\Lambda^{-2k+2}.
\eeqn
On the other hand, the trajectory $\gamma_{2k-1}$ hits
perpendicularly at $\Gamma_1$ at the $k$-th step, i.e.,
$x_{2k-1}(k)=(s_{2k-1}(k), 0)$.
Also, we note that the period two point on $\Gamma_1$
is given by $x=(0, 0)$.
Let $W$ be  the wave front between
$s=s_{2k-1}(k)$ and $s=0$, associated with zero angles.
Then $W$ is an unstable curve, and so is $F^\ell(W)$ for
any $0\le \ell\le k-1$ since $F^\ell(W)$ does not hit the gluing points.
Therefore, $|F^{k-1}(W)|_p\ge \Lambda^{k-1} |W|_p$.
Similar to the proof of Lemma~\ref{lem: rough esta}, we obtain
\beqn
\|x_{2k-1}(k)-x\|=|s_{2k-1}(k)-0|\le C\Lambda^{-k+1},
\eeqn
and thus, $\|x_\infty(k)-x\|\le 2C\Lambda^{-k+1}$, which implies that
$\lim\limits_{k\to\infty} x_\infty(k) = x$. The other limiting result
$\lim\limits_{k\to\infty} y_\infty(k) = y$ can be proven in a similar way.
\end{proof}

We denote the coordinates of $\gamma_\infty$ as
\begin{align*}
x_\infty(k)&=(s_\infty(k), \varphi_\infty(k)), \ k=0, 1, 2, \dots; \\
y_\infty(k)&=(t_\infty(k), \psi_\infty(k)), \  k= 0, 1, 2, \dots,
\end{align*}
and $y_\infty(0^-)=(t_\infty(0), -\psi_\infty(0))$. Note that
$\varphi_\infty(0)=\lim\limits_{n\to\infty} \varphi_n(0)=0$.

\subsection{The Convergence of $\gamma_\infty$ to $\gamma^*$ and
the Shadowing of $\gamma_n$ along $\gamma_\infty$}
\label{sec: palin finer est}

Recall that $\lambda>1$ is the leading eigenvalue of $DF^2$ along the
period two orbit $\gamma^*=\overline{xy}$.
The next lemma provides finer estimates of
the asymptotic convergence of the homoclinic orbit
$\gamma_\infty$ to the period two orbit $\gamma^*$,
as well as the shadowing estimates of
$\gamma_n$ along $\gamma_\infty$.

\begin{lemma}\label{lem: finer est} 
$ $
\begin{itemize}
\item[(a)]
The following estimates hold for the homoclinic orbit $\gamma_\infty$:
{\allowdisplaybreaks
\beq\label{finer est ga n}
\begin{split}
x_\infty(k)=\lambda^{-k}(C_s, C_\varphi) +  \cO(\lambda^{-1.5k}),
& \ \ k=0, 1, 2, \dots, \\
y_\infty(k)=\lambda^{-k} (C_t, C_\psi) + \cO(\lambda^{-1.5k}),
& \ \ k=0, 1, 2, \dots,
\end{split}
\eeq
}where the constants $C_s, C_\varphi, C_t$ and $C_\psi$ satisfy 
the following relation:
\beq\label{C relation}
\dfrac{C_\varphi}{C_s}=\frac{\lambda^{-1}-\lambda}{4a_2\tau^*}, \ \
\dfrac{C_\psi}{C_t}=\frac{\lambda^{-1}-\lambda}{4a_1\tau^*},  \ \
\dfrac{C_t}{C_s}=-\frac{1+\lambda^{-1}}{2a_2 }= -\frac{2a_1}{1+\lambda}.
\eeq
\item[(b)] 
The following estimates hold for the palindromic orbit $\gamma_n$:
 \beq\label{finer est ga n infty0}
\begin{split}
x_n(k)-x_\infty(k)=\lambda^{k-n}(C_{s,k}, C_{\varphi, k}) + \cO(\lambda^{0.5k-n}) ,
& \ \ k=0, 1, \dots, \left\lfloor{\frac{n}{2}}\right\rfloor+1, \\
y_n(k)-y_\infty(k)=\lambda^{k-n}(C_{t,k}, C_{\psi, k}) + \cO(\lambda^{0.5k-n}),
& \ \ k= 0, 1, \dots, \left\lfloor{\frac{n}{2}}\right\rfloor.
\end{split}
\eeq
Here the constants $C_{s,k}, C_{\varphi, k}, C_{t,k}$ and $C_{\psi, k}$
are  given by
{\allowdisplaybreaks
\beq\label{C1 relation}
\begin{split} 
C_{s,k}=C_s \,\lambda^{-2} \,(\lambda^{-2k}+1), \ \ 
& \ \ C_{\varphi, k}= C_\varphi \,\lambda^{-2} \,(\lambda^{-2k}-1), \\
C_{t,k}=C_t\, \lambda^{-1} \,(\lambda^{-2k}+ 1), \  \ 
& \ \  C_{\psi, k}=C_\psi \, \lambda^{-1} \,(\lambda^{-2k}- 1).
\end{split}
\eeq
} 
\begin{comment}
Where the constants $C_{s,k}, C_{\varphi, k}, C_{t,k}$ and $C_{\psi, k}$
are  given by
{\allowdisplaybreaks
\beq\label{C1 relation}
\begin{split} 
C_{s,k}=C_s \,\lambda^{-1} \,(\lambda^{-2k}+1), \ \ 
& \ \ C_{\varphi, k}= C_\varphi \,\lambda^{-1} \,(\lambda^{-2k}-1), \\
C_{t,k}=C_t\, \,(\lambda^{-2k}+ 1), \  \ 
& \ \  C_{\psi, k}=C_\psi \, \,(\lambda^{-2k}- 1).
\end{split}
\eeq
}
\end{comment}
\end{itemize}
\end{lemma}
\begin{proof}
Let $U_1$ and $U_2$ be the neighborhood of  $x$ and  $y$ respectively, which are
given by Lemma~\ref{lem: linearization}.
Choose an integer $k_0>0$ such that $x_n(k)\in U_1$ for all $k\in [k_0, n+2-k_0]$
and $y_n(k)$ for all $k\in [k_0,  n+1-k_0]$.
We apply the coordinate change given by \eqref{coordinate-change}, that is,
$x_n(k)$ and $y_n(k)$ are represented by $(\xi_n(k), \eta_n(k))$ and $(\zeta_n(k), \iota_n(k))$ respectively,
for $k\ge k_0$. Set
\beq\label{setting xis}
\xi_n=\lambda^{-k_0}\xi_n(k_0), \ \
\eta_n=\lambda^{k_0}\eta_n(k_0), \ \
\zeta_n=\lambda^{-k_0}\zeta_n(k_0), \ \
\iota_n=\lambda^{k_0}\iota_n(k_0).
\eeq
Then \eqref{linearization 1} implies that

\beq\label{iterate ga n}
\xi_n(k)=\lambda^{k} \xi_n, \ \ 
\eta_n(k)=\lambda^{-k} \eta_n, \ \
\zeta_n(k) = \lambda^{k} \zeta_n,  \ \
\iota_n(k)=\lambda^{-k} \iota_n.
\eeq
These formulas also hold for $x_n(k)$ with $k\in [0, k_0)\cup (n+2-k_0, n+1]$
and for $y_n(k)$ with $k\in [0, k_0)\cup (n+1-k_0, n]$,
by suitably extending $\Psi_1$ and $\Psi_2$
along a neighborhood of the separatrices of $\gamma^*$.
In particular, we denote 
\beqn
\xi_n(0)=\xi_n, \ \
\eta_n(0)=\eta_n, \ \
\zeta_n(0)=\zeta_n, \ \
\iota_n(0)=\iota_n.
\eeqn

\medskip

\noindent (a) We first show the estimates along the homoclinic orbit $\gamma_\infty$.
The coordinates of 
$x_\infty(k)$ are denoted by $(\xi_\infty(k), \eta_\infty(k))$ for $k=0, 1, 2, \dots$,
and 
the coordinates of 
$y_\infty(k)$ are denoted by $(\zeta_\infty(k), \iota_\infty(k))$ for 
$k=0^-, 0,   1, 2, \dots$. 
By \eqref{def ga infty} and \eqref{setting xis}, 
the following limits exist:
\beqn
\xi_\infty=\lim\limits_{n\to\infty} \xi_n, \ \
\eta_\infty=\lim\limits_{n\to\infty} \eta_n, \ \
\zeta_\infty=\lim\limits_{n\to\infty} \zeta_n, \ \
\iota_\infty=\lim\limits_{n\to\infty} \iota_n.
\eeqn
By passing $n\to\infty$ in \eqref{iterate ga n}, we have
for any $k\ge 0$, 
\beqn
\xi_\infty(k)=\lambda^k \xi_\infty, \ \
\eta_\infty(k)=\lambda^{-k}\eta_\infty, \ \
\zeta_\infty(k)=\lambda^k\zeta_\infty, \ \
\iota_\infty(k)=\lambda^{-k}\iota_\infty.
\eeqn
It is clear that $\lambda^{-k_0}x_\infty(k_0)=(\xi_\infty, \eta_\infty)$ and 
$\lambda^{-k_0}y_\infty(k_0)=(\zeta_\infty, \iota_\infty)$.

Note that
$\lim\limits_{k\to\infty} x_\infty(k)=x$ implies that
$x_\infty(k)$ lies on the local stable manifold $W^s(x)=\{(\xi, \eta):\ \xi= 0\}$,
and thus, $\xi_\infty(k)=\xi_\infty=0$ for all $k\ge 0$. Hence
{\allowdisplaybreaks
\begin{align*}
x_\infty(k)=\begin{pmatrix} s_\infty(k) \\ \varphi_\infty(k) \end{pmatrix}
=\Psi_1\circ \Theta_1 \begin{pmatrix} \xi_\infty(k) \\  \eta_\infty(k) \end{pmatrix}
&=\Psi_1\circ
\begin{pmatrix}
\ \ \cos\theta_1 & \cos\theta_1 \\
-\sin\theta_1 & \sin\theta_1
\end{pmatrix}
 \begin{pmatrix} 0 \\ \lambda^{-k} \eta_\infty \end{pmatrix}
\\
&=\Psi_1\left( \lambda^{-k} \eta_\infty
\begin{pmatrix}
\cos\theta_1 \\ \sin\theta_1
\end{pmatrix} \right) - \Psi_1(0, 0) \\
&= \lambda^{-k}
\begin{pmatrix}
C_s \\ C_\varphi
\end{pmatrix}  + \cO(\lambda^{-1.5k}),
\end{align*}
}where we set
$C_s:=\eta_\infty\cos\theta_1$ and
$C_\varphi:=\eta_\infty\sin\theta_1$.
Similarly, since $y_\infty(k)$ lies on the local stable manifold
$W^s(y)=\{(\zeta, \iota):\ \zeta= 0\}$,
we obtain that $\zeta(k)=\zeta_\infty=0$ for any $k\ge 0$,
and thus,
\beqn
y_\infty(k)=
\begin{pmatrix} t_\infty(k) \\ \psi_\infty(k) \end{pmatrix}
= \lambda^{-k}
\begin{pmatrix}
C_t \\ C_\psi
\end{pmatrix}  + \cO(\lambda^{-1.5k}),
\eeqn
where we set
$C_t:=\iota_\infty\cos\theta_2$ and
$C_\psi:=\iota_\infty\sin\theta_2$.

The first two relations in \eqref{C relation}, that is,
$C_\varphi / C_s=\tan\theta_1$ and
$C_\psi / C_t=\tan\theta_2$, directly follows from \eqref{theta1} and \eqref{theta2}.
Moreover, by \eqref{linearization 2},
we have $\iota_\infty=\lambda_{1, s}\eta_\infty$, and hence
the third relation in \eqref{C relation}, that is,
$C_t/C_s=\lambda_{1, s}\cos\theta_2/\cos\theta_1$,
directly follows from \eqref{lambdazw} and \eqref{lambda relation}. \\

\medskip

\noindent (b) We now show the estimates for the palindromic orbits $\gamma_n$.
Note that there are involutions given by $\cI_1: (s, \varphi)\mapsto (s, -\varphi)$
and $\cI_2: (t, \psi)\mapsto (t, -\psi)$.
Let $\cJ_1(\xi, \eta):=\Theta_1^{-1}\circ \Psi_1^{-1} \circ \cI_1 \circ \Psi_1 \circ \Theta_1(\xi, \eta)$,
then $\cJ_1(0, 0)=(0, 0)$ and
{\allowdisplaybreaks
\begin{align*}
\cJ_1\begin{pmatrix} \xi \\ \eta \end{pmatrix}
&=\begin{pmatrix}
\ \ \cos\theta_1 & \cos\theta_1 \\
-\sin\theta_1 & \sin\theta_1
\end{pmatrix}^{-1}
\begin{pmatrix} 1 & 0 \\ 0 & -1 \end{pmatrix}
\begin{pmatrix}
\ \ \cos\theta_1 & \cos\theta_1 \\
-\sin\theta_1 & \sin\theta_1
\end{pmatrix}
\begin{pmatrix} \xi \\ \eta \end{pmatrix} + \cO\left((|\xi|^2+ |\eta|^2)^{\frac32} \right)\\
&=\begin{pmatrix} \eta \\ \xi \end{pmatrix} + \cO\left(\max\{|\xi|, |\eta|\}^{\frac32} \right), \\
\text{and} & \ \
\cJ_1\begin{pmatrix} \xi \\ \eta \end{pmatrix} - \cJ_1\begin{pmatrix} \xi' \\ \eta' \end{pmatrix}=
\begin{pmatrix} \eta-\eta' \\ \xi-\xi' \end{pmatrix} + \cO\left(\max\{|\xi-\xi'|, |\eta-\eta'|\}^{\frac32} \right).
\end{align*}
}We have similar properties for
$\cJ_2(\zeta, \iota):=\Theta_2^{-1}\circ \Psi_2^{-1} \circ \cI_2 \circ \Psi_2 \circ \Theta_2(\zeta, \iota)$.\\

By time reversibility \eqref{time-rev s},
$(s_n(n+2-k), \varphi_n(n+2-k)=\cI_1(s_n(k), \varphi_n(k))$ for any $1\le k\le [\tfrac{n}{2}]+1$,
and hence correspondingly,
{\allowdisplaybreaks
\begin{align*}
(\lambda^{n+2-k}\xi_n, \lambda^{k-n-2}\eta_n)
&=(\xi_n(n+2-k), \eta_n(n+2-k)) \\
&=\cJ_1(\xi_n(k), \eta_n(k)) \\
&=\cJ_1(\lambda^k \xi_n, \lambda^{-k}\eta_n) \\
&=(\lambda^{-k}\eta_n, \lambda^k \xi_n)
+\cO\left(\max\{ \lambda^{1.5k} |\xi_n|^{1.5}, \ \lambda^{-1.5k}|\eta_n|^{1.5} \} \right).
\end{align*}
}It follows that $\xi_n=\cO(\lambda^{-n})$ by taking $k=1$,
and further, $\xi_n=\eta_n\lambda^{-n-2}+\cO\left( \lambda^{-1.25n}\right)$
by taking $k=\lfloor (n+2)/2 \rfloor$.

On the other hand, recall that $x_n(0)=(s_n(0), 0)$ and $x_\infty(0)=(s_\infty(0), 0)$,
which are unchanged under the involution $\cI_1$. Correspondingly,
$\cJ_1(\xi_n, \eta_n)=(\xi_n, \eta_n)$ and
$\cJ_1(0, \eta_\infty)=(0, \eta_\infty)$. Then
\beqn
(\xi_n, \eta_n-\eta_\infty)
=\cJ_1(\xi_n, \eta_n)-\cJ_1(0, \eta_\infty)
=(\eta_n-\eta_\infty, \xi_n)
+ \cO\left(\max\left\{|\xi_n|, |\eta_n-\eta_\infty|\right\}^{\frac32} \right),
\eeqn
which implies that
\beqn
\eta_n-\eta_\infty=\xi_n+\cO(\lambda^{-1.5n})
=\eta_n\lambda^{-n-2}  + \cO(\lambda^{-1.25n}).
\eeqn
Thus,
$\eta_n-\eta_\infty=\eta_\infty \lambda^{-n-2}+ \cO(\lambda^{-1.25n})$,
and $\xi_n=\eta_\infty \lambda^{-n-2}+ \cO(\lambda^{-1.25n})$. Hence
 for any $0\le k\le \lfloor \frac{n}{2}\rfloor +1$,
{\allowdisplaybreaks
\begin{align*}
x_n(k)-x_\infty(k)
&=\begin{pmatrix} s_n(k) \\ \varphi_n(k) \end{pmatrix} -
\begin{pmatrix} s_\infty(k) \\ \varphi_\infty(k) \end{pmatrix} \\
&=\Psi_1\circ \Theta_1 \begin{pmatrix} \xi_n(k) \\  \eta_n(k) \end{pmatrix}
- \Psi_1\circ \Theta_1 \begin{pmatrix} \xi_\infty(k) \\  \eta_\infty(k) \end{pmatrix}\\
&=\begin{pmatrix}
\ \ \cos\theta_1 & \cos\theta_1 \\
-\sin\theta_1 & \sin\theta_1
\end{pmatrix}
\begin{pmatrix} \lambda^k \xi_n \\ \lambda^{-k} (\eta_n-\eta_\infty) \end{pmatrix}
+ \cO\left(\max\left\{\lambda^{1.5(k-n)}, \lambda^{-1.5k-n} \right\}\right)
\\
&=\lambda^{k-n-2}
\begin{pmatrix}
\eta_\infty \cos\theta_1(\lambda^{-2k}+1) \\
\eta_\infty \sin\theta_1(\lambda^{-2k}-1)
\end{pmatrix}
+ \cO\left(\lambda^{0.5k-n} \right) \\
&= \lambda^{k-n}
\begin{pmatrix}
C_s\lambda^{-2} (\lambda^{-2k}+1) \\ C_\varphi \lambda^{-2} (\lambda^{-2k}- 1)
\end{pmatrix}
+ \cO\left( \lambda^{0.5k-n} \right).
\end{align*}
}The estimates of $y_n(k)-y_\infty(k)$ can be shown in a similar fashion.
The proof of this lemma is complete.
\end{proof}

\begin{remark}\label{rem: finer est ga n} 
We make some comments on the consequences of 
\eqref{finer est ga n infty0}.
\begin{itemize}
\item[(a)]
When 
$k\approx \frac{n}{2}$, i.e., 
$\frac{n}{2}-m \le k\le \frac{n}{2}$ for some fixed $m\in \IN$, 
we have 
\beq\label{finer est ga n infty'}
\begin{split}
x_n(k)=\left( C_s \left(\lambda^{-k} + \lambda^{k-n-2} \right), 
C_\varphi \left(\lambda^{-k} - \lambda^{k-n-2} \right) \right)+ \cO(\lambda^{-0.75n}), \\
y_n(k)=\left( C_t \left(\lambda^{-k} + \lambda^{k-n-1}\right), 
C_\psi \left(\lambda^{-k} - \lambda^{k-n-1}\right) \right) + \cO(\lambda^{-0.75n}).
\end{split}
\eeq
From the proof of \eqref{finer est ga n infty0}, it is not hard to see that the above estimates 
hold for $\frac{n}{2} \le k\le \frac{n}{2}+m$ as well, after possibly enlarging the constants 
in $\cO(\cdot)$. 

\item[(b)]
When
$\eps n \le k\le \frac{n}{2}$ for some $\eps\in (0, \frac12)$, 
we have 
\beq\label{xn-xinf0}
x_n(k)-x_\infty(k) 
=\lambda^{k-n} \left[ (C_{s, k}, C_{\varphi, k}) + o(1) \right], \ \ \text{as}\ n\to\infty.
\eeq
It immediately follows that $x_n(k)$ asymptotically lies on a straight line which goes through $x_\infty(k)$ 
and has direction vector $(C_{s, k}, C_{\varphi, k})$ when $n\to\infty$ and $k/n\ge \eps$. 
Similar formulas hold for $y_n(k)-y_\infty(k)$. 

However, when $k$ is a finite value, we can only conclude that
$x_n(k)-x_\infty(k)=\cO(\lambda^{k-n})$. In the following lemma, 
We shall see that \eqref{xn-xinf0} still holds for all $k\in [0, \lfloor n/2 \rfloor +1]$,
though the new coefficients are in lack of precise formulas. 
\end{itemize}
\end{remark}

\begin{lemma}\label{lem: finer est'} 
The following estimates hold for the palindromic orbit $\gamma_n$:  as $n\to\infty$, 
{\allowdisplaybreaks
\beq\label{finer est ga n infty''''}
\begin{split}
x_n(k)-x_\infty(k)=\lambda^{k-n} \left[v_\infty(2k) + o(1)\right] , \ \ \ \ \ \
& \ \ k=0, 1, \dots, \left\lfloor{\frac{n}{2}}\right\rfloor+1, \\
y_n(k)-y_\infty(k)=\lambda^{k-n} \left[ v_\infty(2k+1) + o(1)\right], \  
& \ \ k= 0, 1, \dots, \left\lfloor{\frac{n}{2}}\right\rfloor,
\end{split}
\eeq
}where the vectors $v_\infty(m)\in \IR^2$, $m=0^\pm, 0, 1, 2, \dots$, 
has uniformly bounded magnitudes. 
\end{lemma}

\begin{proof}
Recall that $x_n(0)=(s_n(0), 0)$ and $x_\infty(0)=(s_\infty(0), 0)$, which are all stay uniform distance away from the singular set (i.e. corner  points).
It suffices to show that there is $s_\infty\in \IR$ such that 
\beq\label{tn-tinfty}
s_n(0)-s_\infty(0)=\lambda^{-n} \left[ s_\infty  + o(1)\right].
\eeq
Indeed, if \eqref{tn-tinfty} holds, then by setting $v_\infty(0)=(s_\infty, 0)\in \IR^2$ we have 
\beqn
x_n(0) - x_\infty(0) = (s_n(0)- s_\infty(0), 0)=
\lambda^{-n} \left[ v_\infty(0) + o(1)\right]. 
\eeqn
Furthermore,
for any  $1\le k\le \left\lfloor{\frac{n}{2}}\right\rfloor+1$,
there exists $C_k'>0$ such that
{\allowdisplaybreaks
\begin{eqnarray*}
x_n(k) - x_\infty(k) 
&=& F^{2k}(x_n(0)) - F^{2k}(x_\infty(0)) \\
&=& D_{x_\infty(0)}F^{2k} ( x_n(0) - x_\infty(0) ) + C_k'\left(  x_n(0) - x_\infty(0) \right)^2  \\
&=& \lambda^{k-n} \cdot \lambda^{-k}  D_{x_\infty(0)}F^{2k} v_\infty(0) 
+ o(\lambda^{-n}) \\
&=& \lambda^{k-n} \left[ v_\infty(2k) + o(1)\right],
\end{eqnarray*}
}where in the last identity we set 
$v_\infty(2k):= \lambda^{-k}  D_{x_\infty(0)}F^{2k}v_\infty(0)$, 
and $D_{x_\infty(0)}F^{2k}$ is the differential of $F^{2k}$ evaluated at $x_\infty(0)$.
Similarly, for any $1\le k\le \left\lfloor{\frac{n}{2}}\right\rfloor$, we set
$v_\infty(2k+1):= \lambda^{-k}  D_{x_\infty(0)}F^{2k+1}v_\infty(0)$, then
\beqn
 y_n(k)-y_\infty(k)  = F^{2k+1}(x_n(0)) - F^{2k+1}(x_\infty(0))
 =\lambda^{k-n} \left[ v_\infty(2k+1) + o(1)\right].
 \eeqn

In the rest of the proof, we concentrate on how to obtain \eqref{tn-tinfty}. 
The proof is based on a geometric arguments by considering the growth of 
a special dispersing wave front. 
To be precise, 
let $W$ be the wave front between $s=s_n(0)$ and $s=s_\infty(0)$, 
associated with zero angles. 
Without loss of generality, we may assume $s_n(0)>s_\infty(0)$, then  
\beqn
W=\left\{x(s)=s(1, 0) \left| \ s_\infty(0)\le s\le s_n(0) \right. \right\}.
\eeqn
It is clear  that $W$ is an unstable curve connecting $x_n(0)$ and $x_\infty(0)$,
such that  $F^{2m}(W)$ is an unstable curve connecting $x_n(m)$ and $x_\infty(m)$ for any $1\le m\le n$. 
Moreover, 
\beqn
\left| F^{2m}(W)\right| =\int_{s_\infty(0)}^{s_n(0)} 
\left\| D_{x(s)} F^{2m} (1, 0)  \right\| ds. 
\eeqn

On the one hand, 
the semi-orbit $\gamma_\infty$ is homoclinic to the period two orbit $\gamma^*$
whose Lyapunov exponent along the unstable direction is equal to $\frac12 \log \lambda$, 
and the vector $(1, 0)$ is in the unstable cone $\CcC^u(x_\infty(0))$, 
there is a constant $C_\infty>0$ such that 
\beqn
\left\| D_{x_\infty(0)} F^{2m} (1, 0) \right\| = \lambda^m C_\infty  +o(1), \ \
\text{as} \ m\to\infty.
\eeqn
On the other hand, by Lemma~\ref{finer est ga n}, we have 
$x_n(m)-x_\infty(m)=\cO(\lambda^{m-n})$ for any $0\le m\le \lfloor \frac{n}{2} \rfloor+1 $.
Furthermore,  recall that $\cI_1$ is the involution given by $\cI_1(s, \varphi)=(s, -\varphi)$,
and the time reversibility~\eqref{time-rev s} implies that
$x_n(n+2-m)=\cI_1 x_n(m)$. Hence 
$ x_n(n+2-m) - \cI_1 x_\infty(m) =\cO(\lambda^{ m-n})$ 
for any $1\le m\le \lfloor \frac{n}{2}  \rfloor +1$.
Note that the reflected homoclinic semi-orbit $\cI \gamma_\infty$ 
have Lyapunov exponent $\frac12\log \lambda$ along the unstable direction as well. 
Since the curvature of unstable curves $\{F^{2m}(W)\}_{0\le m\le n}$ are uniformly bounded, 
we have for any $m=1, 2, \dots, \lfloor \frac{n}{2} \rfloor+1 $
and for any $s\in [s_\infty(0), s_n(0)]$, 
\beqn
F^{2m}(x(s)) - x_\infty(m) =\cO(\lambda^{m-n})  \ \ \text{and} \ \
F^{2n-2m}(x(s)) - \cI_1 x_\infty(m) =\cO(\lambda^{m-n}).
\eeqn
Therefore, we have 
{\allowdisplaybreaks
\begin{eqnarray*}
& & \left\| D_{x(s)} F^{2n} (1, 0) \right\|   \\
&=& \left\| \prod^{1}_{m=n+1-\lfloor \frac{n}{2}\rfloor } D_{F^{2(n-m)}(x(t))}F^2 
\prod_{m=0}^{\lfloor \frac{n}{2}\rfloor } D_{F^{2m} (x(t))}  F^2  \cdot (1, 0) \right\| \\
&=& \left\| \prod^{1}_{m=n+1-\lfloor n/2\rfloor } \left( D_{\cI_1 x_\infty(m)} F^2 + \cO(\lambda^{m-n})\right)
\prod_{m=0}^{\lfloor \frac{n}{2}\rfloor } \left(D_{x_\infty(m)}  F^2 + \cO(\lambda^{m-n})\right)
\cdot (1, 0) \right\| \\
&=& \lambda^n  (C_\infty +o(1)),
\end{eqnarray*}
}and hence 
\beqn
\left|F^{2n}W\right| = \int_{s_\infty(0)}^{s_n(0)} 
\left\| D_{x(t)} F^{2n} (1, 0)  \right\| ds 
= \lambda^n (C_\infty + o(1)) (s_n(0) - s_\infty(0)). 
\eeqn
On the other hand, since $\lim\limits_{n\to\infty} x_\infty(n)=x$ and 
$\lim\limits_{n\to\infty} x_n (n) =\cI_1 x_\infty(2)$, then 
$F^{2n}(W)$ converges to the unstable manifold $W'$ which connects  $x$ and $\cI_1 x_\infty(2)$,
and hence 
$\left|F^{2n}W\right|=|W'|+o(1)$ as $n\to \infty$. 
Therefore, 
$s_n(0) - s_\infty(0)=\lambda^{-n}\left[s_\infty + o(1)\right]$
if we set $s_\infty=|W'|/C_\infty$. 
The proof of this lemma is complete. 
\end{proof}

\bigskip

\section{The Linearized Isospectral Functionals}\label{sec:LIO}

To prove Squash Rigidity Theorem, we introduce the linearized isospectral functionals corresponding to
periodic orbits of the Bunimovich squash-type stadia.

\subsection{Normalization, Parametrization and Deformation}\label{sec: parametrization}

Let $\Omega$ be a Bunimovich squash-type stadium in $\cM^m_{ss}$, where $m\ge 3$.
By Lemma~\ref{lem: gamma uniq},
there is a unique maximal period two orbit
$\gamma^*=\overline{AB}$ for $\Omega$. Without loss
of generality, we may assume that $A$ is the origin of $\IR^2$,
and $B$ lies on the positive horizontal semiaxis of $\IR^2$.

Let $\left\{\hOmega_\mu\right\}_{|\mu|\le 1}$ be
a $C^1$ one-parameter family in $\cM^m_{ss}$  such that $\hOmega_0=\Omega$.
That is, each Bunimovich squash-type stadia $\hOmega_\mu$ 
satisfies Assumptions $\mathrm{(I_{ss})(II)(III_{ss})}$
and the convex arcs $\Gamma_1$ and $\Gamma_2$ are $C^m$ smooth.
We would like to normalize the position of this family as follows.
Lemma~\ref{lem: gamma uniq} shows that
$\hOmega_\mu$ has a unique maximal period two orbit
$$\hgamma^*(\mu)=\overline{\hA(\mu) \hB(\mu)}$$
for any $\mu\in [-1, 1]$.
Then there is a unique orientation-preserving planar isometry $\cT_\mu$
such that $\cT_\mu\left(\hA(\mu) \right)=A$,
$B(\mu):=\cT_\mu\left(\hB(\mu) \right)$
lies on the positive horizontal semiaxis of $\IR^2$, 
and $\cT_\mu\left(W(\mu) \right)$ lies on positive vertical semiaxis,
where  $W(\mu)$ be the vector perpendicular
to the vector $\overrightarrow{\hA(\mu)\hB(\mu)}$ in the counter-clockwise direction.
It is easy to see from the proof of Lemma~\ref{lem: gamma uniq} that
the dependence $\mu\mapsto \left(\hA(\mu), \hB(\mu) \right)$ is $C^1$ smooth,
and so is the mapping $\mu\mapsto \cT_\mu$ in the space of planar isometries, i.e.,
if we $\cT_\mu(v)=E(\mu) v + F(\mu)$ for any $v\in \IR^2$, where
$E(\mu)\in \mathrm{SO}(2, \IR)$ and $F(\mu)\in \IR^2$, then the mappings
$\mu\mapsto E(\mu)$ and $\mu\mapsto F(\mu)$ are both $C^1$ smooth.
We then denote
\beqn
\Omega_\mu=\cT_\mu\hOmega_\mu, \  \ \text{for} \ \ \mu\in [-1, 1],
\eeqn
and call $\left\{\Omega_\mu\right\}_{|\mu|\le 1}$ the normalized family of
the original family $\left\{\hOmega_\mu\right\}_{|\mu|\le 1}$.
Note that $\Omega_0=\Omega$ since $\cT_0=\mathrm{Id}_{\IR^2}$.
Also, the unique maximal period
two orbit of $\Omega_\mu$ is given by $\gamma^*(\mu)=\overline{A B(\mu)}$.
It is clear that the normalized
family $\left\{\Omega_\mu\right\}_{|\mu|\le 1}$ is $C^1$ smooth with respect to
the parameter $\mu$.

Let us now parametrize the $C^1$ one-parameter normalized family $\left\{\Omega_\mu\right\}_{|\mu|\le 1}$
of the Bunimovich squash-type stadia such that $\Omega_0=\Omega$.
We denote
\beqn
\p\Omega_\mu=\Gamma_1(\mu)\cup \Gamma_3(\mu)\cup \Gamma_2(\mu)\cup \Gamma_4(\mu),
\eeqn
where $\Gamma_1(\mu)$ and $\Gamma_2(\mu)$ are the two convex arcs 
and $\Gamma_3(\mu)$ and $\Gamma_4(\mu)$ are flat boundaries. 
We may choose a parametrization $\Phi: [-1, 1]\times J\to \IR^2$,
where the interval $J$ is a union of four consecutive sub-intervals, i.e.,
$J=J_1\cup J_3\cup J_2\cup J_4$, such that
\begin{itemize}
\item[(1)] For any $\mu\in [-1, 1]$ and $i=1, 2, 3, 4$, we have $\Phi(\mu, J_i)=\Gamma_i(\mu)$;
\item[(2)] The mapping $\mu\mapsto \Phi(\mu, \cdot)$ is $C^1$ smooth;
\item[(3)] Set $\Phi_i=\Phi|_{J_i}$ for $i=1, 2$.
For any fixed $\mu\in [-1, 1]$, the map $r\mapsto \Phi_i(\mu, r)$ is $C^m$ smooth.
\end{itemize}
We further define {\it the deformation function} $\bn: [-1, 1]\times J \to \IR$
of the normalized family $\left\{\Omega_\mu\right\}_{|\mu|\le 1}$ by
\beq\label{def deformation function}
\bn(\mu, r)=\bn_{\Phi}(\mu, r):=\langle \p_\mu\Phi(\mu, r), \ N(\mu, r) \rangle,
\eeq
where $\langle\cdot, \cdot \rangle$ is the standard scalar product in $\IR^2$
and $N(\mu, r)$ is the out-going unit normal vector to $\p\Omega_\mu$
at the point $\Phi(\mu, r)$.
It is obvious that $\bn(\mu, r)$ is continuous in $\mu$.
Moreover, for any $\mu\in [-1, 1]$, the function
$r\mapsto \bn(\mu, r)$ is $C^m$ smooth on $J_1\cup J_2$.

\begin{remark}\label{rem: bn}
There are certainly many different choices of the parametrization that obey 
the above rules (1)-(3). Different parametrization $\Phi$ would surely give 
different deformation function $\bn_\Phi$. Nevertheless, we shall only focus on 
the vanishing property of the deformation function, which does not depend on 
the parametrization at all. More precisely,  
let $\Phi$ and $\Phi'$ be two parametrizations with domain interval
$$
J=J_1\,\cup \,J_3\,\cup \,J_2\, \cup J_4\qquad \text{ and }
\qquad J'=J_1'\, \cup J_3' \,\cup J_2' \,\cup \, J_4',
$$
respectively, by Lemma 3.3 in \cite{dSKW17}, we have $n_{\Phi}|_{J_i}\equiv 0$ 
if and only if $n_{\Phi'}|_{J_i'}\equiv 0$ for any $i=1, 2, 3, 4$. 
\end{remark}

In the rest of the paper, we shall simply denote the deformation function by $\bn(\mu, r)$ 
if the parametrization $\Phi$ is clear. 
To prove the Squash Rigidity Theorem,  
the following lemma asserts that 
we only need to show the vanishing of 
$\bn$ on $J_1\cup J_2$.

\begin{lemma}\label{lem: main n vanish 00}
If $\bn(\mu, \cdot) \equiv 0$ on $J_1\cup J_2$ for any $\mu\in [-1, 1]$,
then the normalized family $\left\{\Omega_\mu\right\}_{|\mu|\le 1}$ is constant, 
and, thus, the original family $\left\{\hOmega_\mu\right\}_{|\mu|\le 1}$ is isometric.
\end{lemma}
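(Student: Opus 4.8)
The plan is to deduce from the hypothesis that $\partial\Omega_\mu$ does not depend on $\mu$, and then that every normalizing isometry $\cT_\mu$ is trivial. I would proceed in three steps. The first step is to upgrade the vanishing of $\bn$ on $J_1\cup J_2$ to vanishing on all of $J$. Fix $\mu$. The function $r\mapsto\bn(\mu,r)$ is continuous on the whole closed $C^1$ curve $\partial\Omega_\mu$, since both $\p_\mu\Phi(\mu,\cdot)$ and the outward unit normal $N(\mu,\cdot)$ are, and at each geometric point of $\partial\Omega_\mu$ it equals the normal component of the velocity of the moving curve, which is independent of the parametrization. Along the straight segment $\Gamma_3(\mu)$ this normal velocity is an affine function of arclength, because the points of a $C^1$ family of segments move with velocity affine in the position along the segment while the normal direction stays constant along it. By continuity of $\bn$ across the two gluing points of $\Gamma_3(\mu)$, the boundary values of this affine function coincide with the values coming from $\Gamma_1(\mu)$ and $\Gamma_2(\mu)$, which vanish by hypothesis; an affine function on a segment vanishing at both endpoints is identically zero, so $\bn(\mu,\cdot)\equiv0$ on $J_3$, and likewise on $J_4$. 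Hence $\bn(\mu,\cdot)\equiv0$ on $J$ for every $\mu\in[-1,1]$.

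The second step is the classical fact that a $C^1$ family of curves with identically vanishing normal variation is stationary. Concretely, since the normal component of $\p_\mu\Phi(\mu,r)$ vanishes we may write $\p_\mu\Phi(\mu,r)=c(\mu,r)\,\p_r\Phi(\mu,r)$ with $c$ continuous on the compact cylinder $[-1,1]\times(J/\!\sim)$. Along any solution $\mu\mapsto r(\mu)$ of the characteristic equation $\tfrac{dr}{d\mu}=-c(\mu,r)$ one has $\tfrac{d}{d\mu}\Phi(\mu,r(\mu))=\p_\mu\Phi-c\,\p_r\Phi=0$; since $c$ is continuous and the $r$-variable lies on a compact circle, such a solution through any prescribed $(\mu_0,r_0)$ exists for all $\mu\in[-1,1]$. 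Therefore every point of $\partial\Omega_{\mu_0}$ belongs to $\partial\Omega_\mu$ for every $\mu$, and, running the same argument from $\mu$, conversely; so $\partial\Omega_\mu=\partial\Omega_{\mu_0}$ for all $\mu,\mu_0$. Taking $\mu_0=0$ gives $\partial\Omega_\mu=\partial\Omega$, hence $\Omega_\mu\equiv\Omega$: the normalized family is constant.

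The third step recovers the original family. By construction $\hOmega_\mu=\cT_\mu^{-1}\Omega_\mu=\cT_\mu^{-1}\Omega$, using the second step. Since $\hOmega_\mu\in\cM^m_{ss}$, its flat boundaries are the fixed segments $\Gamma_3$ and $\Gamma_4$; but they are also $\cT_\mu^{-1}(\Gamma_3(\mu))$ and $\cT_\mu^{-1}(\Gamma_4(\mu))$, which equal $\cT_\mu^{-1}(\Gamma_3)$ and $\cT_\mu^{-1}(\Gamma_4)$ by the second step. So $\cT_\mu^{-1}$ is an orientation-preserving planar isometry fixing each of $\Gamma_3$ and $\Gamma_4$ setwise (the labels cannot be swapped, by continuity in $\mu$ from $\cT_0=\mathrm{Id}$). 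An orientation-preserving isometry fixing a segment setwise is the identity or the half-turn about its midpoint, and the half-turn about the midpoint of $\Gamma_3$ cannot also preserve $\Gamma_4$, since the two disjoint segments have distinct midpoints. Hence $\cT_\mu=\mathrm{Id}$ and $\hOmega_\mu=\Omega=\hOmega_0$ for all $\mu$.

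The two substantive points are the first step -- one must notice that continuity of $\bn$ across the non-$C^2$ gluing points, together with the affine dependence of a segment's normal velocity, automatically forces $\bn$ to vanish on the flat walls as well -- and the transport argument of the second step, whose only delicate feature is the low regularity: $c$ is merely continuous at the gluing points, so one relies on mere existence (not uniqueness) of characteristics and on compactness of the parameter cylinder to propagate them over the whole interval $\mu\in[-1,1]$. The remaining bookkeeping, including the isometry rigidity, is elementary.
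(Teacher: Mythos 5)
Your proof is correct and reaches the conclusion by the same overall skeleton as the paper (first show the normalized family is constant, then show each $\cT_\mu$ is the identity), but the middle of the argument is organized differently. The paper's proof is shorter: from $\bn\equiv 0$ on $J_1\cup J_2$ it concludes directly that $\Gamma_1(\mu)$ and $\Gamma_2(\mu)$ are fixed as sets (the standard ``vanishing normal variation $\Rightarrow$ stationary curve'' fact, which you make explicit via characteristics), and then observes that the flat walls are the segments joining the now-fixed gluing points, so they are fixed too. This makes your Step~1 --- extending $\bn\equiv 0$ to $J_3\cup J_4$ via the observation that the normal velocity of a moving segment is affine in arclength and vanishes at both endpoints --- logically unnecessary, although it is correct and is a nice parametrization-independent way to see that the flat walls cannot move; it also lets you run the transport argument once on the whole closed boundary, which sidesteps any endpoint issues for the individual arcs. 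Your Step~3 is a more careful version of the paper's one-line claim that $\cT_\mu$ fixes the four gluing points (for an orientation-preserving planar isometry, fixing two distinct points already forces the identity); your setwise-invariance plus half-turn analysis, with continuity in $\mu$ ruling out the label swap, is a valid alternative.

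The only point worth tightening is in Step~2: writing $\p_\mu\Phi=c\,\p_r\Phi$ requires $\p_r\Phi$ to be continuous and nonvanishing on the whole circle, and the paper's parametrization only guarantees that the \emph{direction} of $\p_r\Phi$ is continuous across the gluing points (the boundary is $C^1$ there), not the speed, so $c$ could a priori jump at those four points. This is harmless --- reparametrize each $\p\Omega_\mu$ by a multiple of arclength, or run the characteristic argument piecewise and match at the gluing points, whose trajectories are already pinned down by your Step~1 --- but as written the continuity of $c$ is asserted rather than checked.
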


\begin{proof}
By the definition of the deformation function, 
the condition $\bn(\mu, \cdot) \equiv 0$ on $J_i$, $i=1, 2$, implies that all the arcs 
$\Gamma_i(\mu)$ should lie on a common longest arc $\Gamma^0_i$. 
It is obvious that $\Gamma^0_i$ is a strictly convex curve. 
We show that in fact $\Gamma_i(\mu)=\Gamma^0_i$ for all $\mu\in [-1, 1]$ as follows.
Let $P_{14}^0$ be the left-top point of $\Gamma_1^0$, by compactness, 
there is $\mu_0\in [-1, 1]$ such that $P_{14}(\mu_0)=P_{14}^0$, where
$P_{14}(\mu)$ denotes the joint point of $\Gamma_1(\mu)$ and $\Gamma_4(\mu)$.
We claim that $P_{14}(\mu)=P_{14}^0$  for all $\mu\in [-1, 1]$. Otherwise, we suppose that 
there is $\mu_1\in [-1, 1]$ such that its joint point $P_{14}(\mu_1)$ cannot approach $P_{14}^0$. 
On the one hand, the strict convexity of $\Gamma^0_1$ and Assumption (II) imply that the slope of 
the flat boundary $\Gamma_4(\mu_1)$ should be bigger  than  that of $\Gamma_4(\mu_0)$.
On the other hand, both $\Gamma_2(\mu_0)$ and $\Gamma_2(\mu_1)$ lie on
the other longest arc $\Gamma^0_2$, 
which implies that $\Gamma^0_2$ should be under the flat boundary $\Gamma_4(\mu_0)$. 
Therefore, 
$\Gamma_4(\mu_1)$ and $\Gamma_2(\mu_1)$ cannot close up to form a 
Bunimovich squash-type stadium (see Fig.~\ref{fig:stadia10}). In a similar fashion, we can show 
$P_{ij}(\mu)=P_{ij}^0$  for all $\mu\in [-1, 1]$, $i=1, 2$ and $j=3, 4$.

\begin{figure}[h]
\begin{center}
\includegraphics[width=\textwidth]{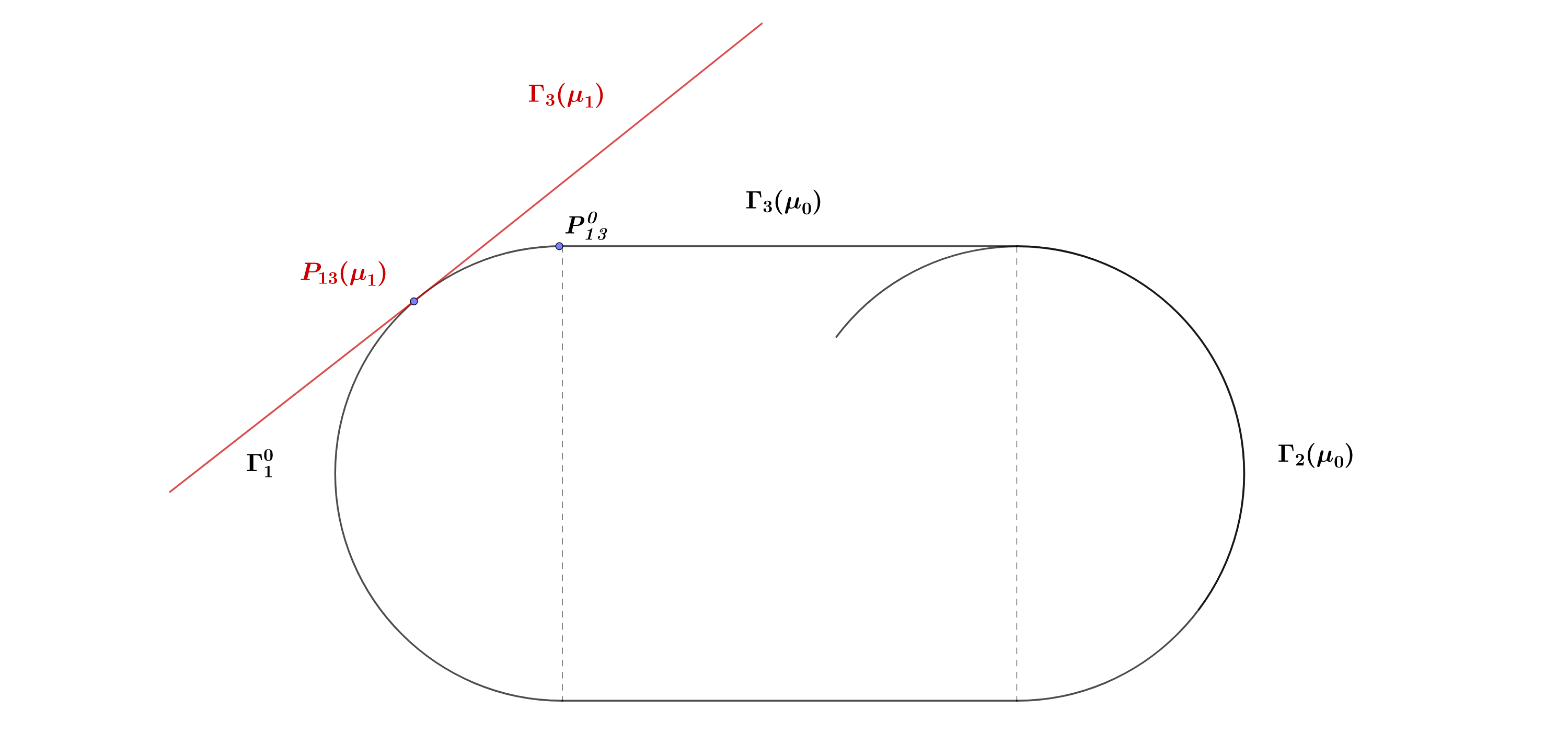}
\end{center}
\caption{Proof of $P_{14}(\mu)=P_{14}^0$}
\label{fig:stadia10}
\end{figure}
 
It directly follows that $\Gamma_i(\mu)=\Gamma_i(0)$, $i=1, 2, 3, 4$, 
for all $\mu\in [-1, 1]$. 
Hence the normalized family $\left\{\Omega_\mu\right\}_{|\mu|\le 1}$ is a constant family, that is,
$\Omega_\mu=\Omega_0$. Therefore,
$\hOmega_\mu=\cT_\mu^{-1}\Omega_\mu=\cT_\mu^{-1}\Omega_0$,
which implies that the original family $\left\{\hOmega_\mu\right\}_{|\mu|\le 1}$ is an isometric family.
\end{proof}

Let us recall the statement of Squash Rigidity Theorem:
for any homothety ratio $\chi>0$, 
if a family $\left\{\hOmega_\mu\right\}_{|\mu|\le 1}$ of
Bunimovich squash-type stadia in $\cM^\omega_{ss}(\chi)$ is
dynamically isospectral, then it is an isometric family. By
Lemma~\ref{lem: main n vanish 00}, it is sufficient to show that
the normalized family $\left\{\Omega_\mu\right\}_{|\mu|\le 1}$
is a constant family, or equivalently, $\bn\equiv 0$
on $J_1\cup J_2$. Note that if the original family
$\left\{\hOmega_\mu\right\}_{|\mu|\le 1}$ is dynamically
isospectral, so is the normalized family
$\left\{\Omega_\mu\right\}_{|\mu|\le 1}$
since isometries do not change length spectra.
Moreover, the normalized family
$\left\{\Omega_\mu\right\}_{|\mu|\le 1}$
still belongs to the class $\cM^\omega_{ss}(\chi)$.
For the rest of this section and Section~\ref{sec: proof of thm1},
we shall focus on the normalized family $\left\{\Omega_\mu\right\}_{|\mu|\le 1}$.

\subsection{Functionals related to Length Spectra}
\label{sec: functional spectra}

We first show the following basic fact
for the length spectrum of a Bunimovich squash-type stadium.

\begin{lemma}\label{lem: cL zero} For any Bunimovich squash-type
stadium $\Omega$, its length spectrum $\cL(\Omega)$ has zero Lebesgue measure.
\end{lemma}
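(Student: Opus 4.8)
The plan is to show that $\cL(\Omega)$, as a countable union of countable sets, together with a dimension/analyticity argument, is actually a countable set — hence of zero Lebesgue measure. The statement $\cL(\Omega)$ has zero Lebesgue measure is almost immediate from the fact that $\cL(\Omega)$ is a \emph{countable} set: by definition,
\[
\cL(\Omega)=\IN\cdot\{L(\gamma)\,:\,\gamma\text{ a periodic billiard orbit}\}\ \cup\ \IN\cdot\{|\p\Omega|\},
\]
and the set of periodic billiard orbits is countable, since for each pair $(p,q)$ with $p/q\in\IQ\cap(0,1/2]$ the set $\Gamma_{p/q}$ of periodic orbits of rotation number $p/q$ is finite (the generating function $L$ restricted to the relevant product of boundary arcs is analytic — or at least $C^m$ with finitely many critical points after using the $C^1$-but-not-$C^2$ corner structure — so it has finitely many critical points, each corresponding to a periodic orbit). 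Thus $\{L(\gamma)\}$ is a countable set of reals, multiplying by $\IN$ keeps it countable, and a countable subset of $\IR$ has Lebesgue measure zero.

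First I would make precise why $\Gamma_{p/q}$ is finite for each fixed $p/q$. A periodic orbit of rotation number $p/q$ corresponds to a critical point of the length functional $L(r_1,\dots,r_q)=\sum_i\tau(r_i,r_{i+1})$ on the appropriate component of $(\p\Omega)^q$ cut out by the admissible symbolic codes. On pieces where the boundary is strictly convex ($\Gamma_1\cup\Gamma_2$) or flat ($\Gamma_3\cup\Gamma_4$), the function $\tau$ is real-analytic, so $L$ is real-analytic on each such coordinate chart; a real-analytic function on a compact set has finitely many isolated critical points unless its critical set has positive dimension. To rule out the latter one can invoke non-degeneracy: the Hessian computations in Lemma~\ref{lem: var path} (in particular the entries $\alpha(z)=\cK\cos\varphi+\cos^2\varphi/\tau$) show that for Bunimovich tables satisfying the (doubly) defocusing mechanism the relevant Hessians are nondegenerate along the orbits we care about; more generally one argues that a continuum of periodic orbits of the same period is impossible for these tables. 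Even absent full nondegeneracy, the collection of \emph{all} periodic orbits, over all $(p,q)$, is a countable union of analytic varieties, and its set of lengths is a countable union of values of analytic functions on connected analytic manifolds, hence at most countable.

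The one genuine subtlety — the main obstacle — is handling orbits that pass through the gluing points $\Gamma_i\cap\Gamma_j$, where $\p\Omega$ is only $C^1$ and the billiard map is singular. Strictly speaking such ``orbits'' are boundary/limit cases and one must decide whether to include them; in any case the set of symbolic codes involving corner hits is still countable, and each such constrained length functional is analytic on its (lower-dimensional) domain, contributing only countably many additional length values. So this does not affect the conclusion. I would therefore organize the proof as: (i) note $\cL(\Omega)$ is contained in $\IN\cdot S$ where $S$ is the set of lengths of periodic (and generalized periodic) orbits; (ii) show $S$ is countable by exhibiting it as a countable union, over admissible symbolic codes, of the (finite, or at worst countable) critical values of an analytic generating function on a compact analytic domain; (iii) conclude that $\IN\cdot S\cup\IN\cdot\{|\p\Omega|\}$ is countable, hence has zero Lebesgue measure. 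The bulk of the writing is in step (ii), but it is routine given the analyticity of $\tau$ on each boundary piece and the finiteness of the alphabet.
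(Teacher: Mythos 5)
Your route (proving that $\cL(\Omega)$ is \emph{countable}) is genuinely different from the paper's, which simply invokes Lemma 4.1 of \cite{dSKW17} --- a Sard-type statement that the critical values of the length functionals form a Lebesgue-null set --- and only adds the remark that the loss of $C^2$ regularity at the four gluing points is harmless because at most countably many periodic orbits pass through them. The difficulty with your route is that it proves a stronger conclusion (countability rather than measure zero) under a stronger hypothesis (analyticity) than the lemma permits. The lemma is stated for an arbitrary Bunimovich squash-type stadium, whose arcs $\Gamma_1,\Gamma_2$ are only assumed $C^3$, and it is needed in the $C^\infty$ class for Theorem~\ref{thm: main1'}; your argument leans repeatedly on ``$\tau$ is real-analytic'' and ``$L$ is real-analytic on each chart,'' which is simply not available there. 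Without analyticity, the critical set of the length functional on a $q$-dimensional domain need not have finitely many components, $L$ need not be locally constant on it (Whitney-type examples), and for fixed finite smoothness the classical Morse--Sard theorem does not even apply once $q$ exceeds the order of smoothness --- so neither countability nor measure zero follows from your outline in the generality required.

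Two more specific points. First, the assertion that $\Gamma_{p/q}$ is finite is unjustified and false in the closest relevant setting: when $\Gamma_3\parallel\Gamma_4$ (which the definition of a squash-type stadium does not exclude, and which the honest stadium certainly realizes) the perpendicular bounces between the two flat walls form a continuum of period-two orbits. They all share one length, which is why countability of the set of \emph{lengths} can survive, but finiteness of $\Gamma_{p/q}$ cannot be the load-bearing step. Second, your treatment of corner-hitting codes waves at exactly the point that constitutes the paper's only new content relative to \cite{dSKW17}. If you want a self-contained argument, the correct target is ``measure zero,'' obtained by applying the critical-value argument of \cite{dSKW17} code by code and disposing of the corner orbits separately; your analytic-countability argument is acceptable as a proof of the special case $\Omega\in\cM^\omega_{ss}$ (there the critical set of each $L_{\bi}$ is a compact analytic variety with finitely many components, on each of which $L_{\bi}$ is constant), but not of the lemma as stated.
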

\begin{proof} The proof is almost the same as that of Lemma 4.1 in \cite{dSKW17},
with the only difference that the  parametrization
$\Phi$ of the boundary $\p\Omega$ is not $C^2$ at the four gluing points.
Nevertheless, there are at most countably many periodic orbits through those points,
and thus $\cL(\Omega)$ has zero Lebesgue measure.
\end{proof}

By Lemma~\ref{lem: cL zero} and the intermediate value theorem, we immediately get

\begin{lemma}\label{lem: Darboux}
For any continuous function $\Delta: [-1, 1]\to \IR$,
if Range$(\Delta)\subset \cL(\Omega)$, then $\Delta\equiv$constant.
\end{lemma}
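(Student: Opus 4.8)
The plan is to combine Lemma~\ref{lem: cL zero}, which asserts that $\cL(\Omega)$ has zero Lebesgue measure, with the intermediate value theorem, exactly as the text preceding the statement advertises. First I would observe that $\Delta\colon[-1,1]\to\IR$ is a continuous function on a connected domain, so its image $\mathrm{Range}(\Delta)=\Delta([-1,1])$ is a connected subset of $\IR$, i.e.\ an interval (possibly degenerate). This is just the intermediate value theorem: if $a,b\in\mathrm{Range}(\Delta)$ with $a<b$ and $c\in(a,b)$, then picking $\mu_a,\mu_b$ with $\Delta(\mu_a)=a$, $\Delta(\mu_b)=b$, continuity on the interval between $\mu_a$ and $\mu_b$ forces some $\mu_c$ with $\Delta(\mu_c)=c$, so $c\in\mathrm{Range}(\Delta)$.

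Next I would argue by contradiction. Suppose $\Delta$ is not constant. Then $\mathrm{Range}(\Delta)$ is a nondegenerate interval $[a,b]$ (or at least contains such an interval), hence has positive Lebesgue measure, indeed $\mathrm{mes}(\mathrm{Range}(\Delta))\ge b-a>0$. On the other hand, the hypothesis $\mathrm{Range}(\Delta)\subset\cL(\Omega)$ together with Lemma~\ref{lem: cL zero} gives $\mathrm{mes}(\mathrm{Range}(\Delta))\le \mathrm{mes}(\cL(\Omega))=0$, a contradiction. Therefore $\Delta$ must be constant.

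There is essentially no obstacle here: the only inputs are the zero-measure property of the length spectrum (already established in Lemma~\ref{lem: cL zero}) and the elementary fact that a continuous real-valued function on an interval has connected, hence interval, image. The one point worth stating carefully is that a connected subset of $\IR$ containing two distinct points has positive measure, so that the zero-measure containment really does force constancy; this is immediate since such a set contains an open interval. Thus the proof is just the two-line argument above, and I would write it out in that form.
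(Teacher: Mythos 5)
Your proposal is correct and is precisely the argument the paper intends: it states the lemma as an immediate consequence of Lemma~\ref{lem: cL zero} together with the intermediate value theorem, which is exactly your contradiction argument that a non-constant continuous image would contain a nondegenerate interval of positive measure inside the null set $\cL(\Omega)$. Nothing is missing.
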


Now for any Bunimovich squash-type stadium $\Omega$ with boundary
$\p\Omega=\Gamma_1\cup \Gamma_3\cup \Gamma_2\cup \Gamma_4$,
we would like to construct functionals corresponding to symbolic codes.
For any $q\ge 2$, we introduce the length function associated to
a symbolic code $\bi=(i_1, i_2, \dots, i_{q})\in \{1, 2, 3, 4\}^q$, that is,
\beqn
L_{\bi}(\gamma)=\sum_{k=1}^{q} \tau(r_k, r_{k+1})
\eeqn
for $\gamma=(r_1, r_2, \dots, r_{q})$ such that $r_k\in \Gamma_{i_k}$ for $1\le k\le q$,
where we set $r_{q+1}=r_1$ since $q+1 \equiv 1 \pmod q$.
Similar to Lemma~\ref{lem: gamma uniq} and Lemma~\ref{lem: gamma_n uniq},
we can show that
the maximal points of $L_{\bi}$ shall either give the maximal periodic orbits corresponding to $\bi$,
or a singular set containing some gluing points.

Let us now consider a $C^1$ one-parameter normalized family
$\left\{\Omega_\mu\right\}_{|\mu|\le 1}$ of the Bunimovich squash-type stadia
in $\cM^m_{ss}$, where $m\ge 3$. 
We say that a symbolic code $\bi$ is {\it good} for
the family $\left\{\Omega_\mu\right\}_{|\mu|\le 1}$ if
\begin{itemize}
\item for every $\mu\in [-1, 1]$, 
there exists a unique maximal periodic orbit $\gamma_\bi(\mu)$ corresponding to $\bi$;
\item the dependence $\mu\mapsto \gamma_\bi(\mu)$ is $C^1$ smooth.
\end{itemize}
Then we can define a $C^1$ function by
\beqn
\Delta(\mu; \bi)=L^\mu_{\bi}(\gamma_{\bi}(\mu)), \ \ \text{for any}\ \mu\in [-1, 1].
\eeqn
Moreover, let us define the function
\beq\label{def G}
G(\mu; z)=G(\mu; r, \varphi)=\bn(\mu, r)\cos\varphi
\eeq
for a collision point $z=(r, \varphi)$,
where $\bn$ is the deformation function given by \eqref{def deformation function}.
By Proposition 4.6 in \cite{dSKW17}, we obtain
\beq\label{def ell bi}
\frac12\Delta'(\mu; \bi)=\frac12\p_\mu L^\mu_{\bi}(\gamma_{\bi}(\mu))
=\sum_{z\in \gamma_{\bi}(\mu)} G(\mu; z).
\eeq

If the family $\left\{\Omega_\mu\right\}_{|\mu|\le 1}$  is dynamically isospectral, that is,
$\cL(\Omega_\mu)=\cL(\Omega)$ for any $\mu\in [-1, 1]$,
then $\mu\mapsto \Delta(\mu; \bi)$ is constant by Lemma~\ref{lem: Darboux}.
Therefore, by \eqref{def ell bi},
\beq\label{sum G vanish}
\sum_{z\in \gamma_{\bi}(\mu)} G(\mu; z)= 0, \ \ \text{for any}\ \ \mu\in [-1, 1].
\eeq

We now investigate the unique maximal periodic orbits corresponding to
special good codes that we have studied in earlier sections:
\begin{itemize}[leftmargin=0.75cm]
\item[(1)]
Functionals related to the maximal period two orbit $\gamma^*(\mu)$:
note that the corresponding symbolic code is $\overline{12}$.
Since $\Delta(\mu; \overline{12})$ is constant in $\mu$, and
 the angles for $\gamma^*(\mu)$ are of zero degree, we have for any $\mu\in [-1, 1]$,
\beq\label{def ell12}
\frac12\Delta'(\mu; \overline{12})
=\sum_{z\in \gamma^*(\mu)} G(\mu; z)=\bn_1(\mu, 0)+\bn_2(\mu, 0).
\eeq
\item[(2)]
Functionals related to the palindromic periodic orbits $\gamma_{n}(\mu)$,
which are studied in Section~\ref{sec:palindromic} for the table $\Omega_\mu$.
The symbolic codes $\bi_n$ for $\gamma_{n}(\mu)$ are given by \eqref{def palin},
and the corresponding functional is given by
$$
\frac12\Delta'(\mu; \bi_{n})=\sum_{z\in \gamma_n(\mu)}  G(\mu; z).
$$
\begin{comment}
We shall also consider the other sequence of palindromic periodic orbits $\hgamma_{n}(\mu)$,
whose symbolic codes $\widehat{\bi}_{n}$ are given by \eqref{def palin1}, and
$$\frac12\Delta'(\mu; \widehat{\bi}_{n})=\sum_{z\in \hgamma_n(\mu)}  G(\mu; z).
$$
\end{comment}
\end{itemize}

For notational simplicity, we shall omit $\mu$
and just write $\Gamma_i$, $\bn(r)$, $\gamma^*$, $\gamma_n$,  $G(z)$,
instead of $\Gamma_i(\mu)$, $\bn(\mu, r)$,  $\gamma^*(\mu)$, $\gamma_n(\mu)$,
$G(\mu; z)$ respectively. Also,
we briefly write the summation $\sum_{z\in \gamma} G(z)$ by $\sum_\gamma G$.

\subsection{Arclength Parameterization on $\Gamma_1$ and $\Gamma_2$}\label{sec: arclength p}

Recall that the deformation function $\bn=\bn_\Phi$ 
would have different formulas under different pararmetrizations $\Phi$.
As we have explained in Remark~\ref{rem: bn}, Lemma 3.3 in \cite{dSKW17} shows that
the vanishing property of $\bn$ does not depend on the parametrizations at all.
In order to apply the formulas that we previously obtained in Section~\ref{sec: period two} and~\ref{sec:palindromic}, 
we shall introduce the parametrizations separately on 
$\Gamma_1(\mu)$ and $\Gamma_2(\mu)$
which are of arclength parametrization  for some a priori fixed parameter $\mu_0$ as follows.

More precisely, 
we recall that the normalized family $\left\{\Omega_\mu\right\}_{|\mu|\le 1}$
belongs to the class $\cM^m_{ss}(\chi)$,
where $m\ge 3$ and $\chi$ is an a priori fixed homothety ratio.
Given a fixed $\mu_0\in [-1, 1]$, we first introduce a parametrization
$s\mapsto \Phi_1(\mu, s)$ with $s\in J_1$ on $\Gamma_1(\mu)$
such that 
$\Phi_1(\mu, 0)=A$ for all $\mu\in [-1, 1]$
and 
$\Phi_1(\mu_0, s)$ is of arclength parametrization on $\Gamma_1(\mu_0)$.
Recall that 
the unique maximal period two orbit of $\Omega_\mu$ is given by
$\gamma^*(\Omega_\mu)=\overline{A B(\mu)}$,
where 
$A$ is the origin of $\IR^2$ and
$B(\mu)$ lies on the positive horizontal semiaxis of $\IR^2$.
By Assumption (IV), there is an orientation preserving $\chi$-homothety $\cS_\mu$ transforms 
a sub-curve of $\Gamma_1(\mu)$ near $A$ onto a sub-curve of $\Gamma_2(\mu)$ near $B(\mu)$.
We note that the tangent (resp. normal) vector of both $\Gamma_1(\mu)$ at $A$  and $\Gamma_2(\mu)$ at $B(\mu)$
is vertical (resp. horizontal), and 
$\cS_\mu$ transforms the tangent/normal vector of $\Gamma_1(\mu)$ at $A$ 
into the tangent/normal vector of $\Gamma_2(\mu)$ at $B(\mu)$.
Therefore, if we set 
\beqn
\widetilde{\Phi}_2(\mu, t)=\cR_\mu\left[\chi \Phi_1\left(\mu, t/\chi\right)\right]
\ \ \text{for}\ t \ \text{close to} \ 0,
\eeqn
where $\cR_\mu$ is the counter-clockwise rotation 
with center at the middle point of $\overline{A B(\mu)}$
by 180 degree,
then 
the graph of $\widetilde{\Phi}_2(\mu, t)$ coincides 
with $\Gamma_2(\mu)$ near  $B(\mu)$. 
Therefore, we can extend $t\mapsto \widetilde{\Phi}_2(\mu, t)$ to 
a parametrization $t\mapsto \Phi_2(\mu, t)$ with $t\in J_2$ on $\Gamma_2(\mu)$ 
for all $\mu\in [-1, 1]$ such that 
\beq
\label{eq: homo}
\Phi_2(\mu, t)=\cR_\mu\left[\chi \Phi_1\left(\mu,  t/\chi \right)\right]
\ \ \text{for}\ t \ \text{close to} \ 0.
\eeq
In particular, we have $\Phi_2(\mu, 0)=B(\mu)$ for all $\mu\in [-1, 1]$
and $\Phi_2(\mu_0, t)$ is of arclength parametrization on $\Gamma_2(\mu_0)$.
Of course, the parametrizations 
$s\mapsto \Phi_1(\mu, s)$ and $t\mapsto \Phi_2(\mu, t)$
need not be of arclength for other parameters $\mu\ne \mu_0$.
Under the new parametrizations, we denote the deformation function
on $\Gamma_1(\mu)$ and $\Gamma_2(\mu)$ by $\bn_1(\mu, s)$ and $\bn_2(\mu, t)$ respectively.
Below is an immediate consequence of the fact that the family
$\{\Omega_\mu\}_{|\mu|\le 1}$ is in the normalized position.

\begin{lemma}\label{lem: arclength}
$\bn_1(\mu, 0)=\bn_1'(\mu, 0)=0$ for any $\mu\in [-1, 1]$.
\end{lemma}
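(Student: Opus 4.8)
The plan is to compute the deformation function $\bn_1(\mu,\cdot)$ and its $s$-derivative directly at $s=0$, exploiting the normalization set up in Section~\ref{sec: parametrization}. The key point is that this normalization pins the point $A$ of every maximal period two orbit $\gamma^*(\mu)=\overline{A\,B(\mu)}$ at the origin of $\IR^2$, while $B(\mu)$ is put on the positive horizontal axis. Since in the arclength parametrization $s\mapsto\Phi_1(\mu,s)$ of $\Gamma_1(\mu)$ the value $s=0$ is, by definition, the point $A$ of this orbit, we have $\Phi_1(\mu,0)=A$ for all $\mu\in[-1,1]$, hence $\partial_\mu\Phi_1(\mu,0)=0$. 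Reading the definition \eqref{def deformation function} of the deformation function in the arclength parameter, this gives at once
\[
\bn_1(\mu,0)=\langle\partial_\mu\Phi_1(\mu,0),\,N(\mu,0)\rangle=0 .
\]

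For the derivative I would differentiate in $s$. Since $s$ is arclength, $\partial_s\Phi_1(\mu,s)=T(\mu,s)$ is a unit tangent vector to $\Gamma_1(\mu)$; using the joint regularity of $\Phi_1$ ($C^1$ in $\mu$, $C^m$ with $m\ge 3$ in $s$) to commute $\partial_s$ and $\partial_\mu$, one gets
\[
\bn_1'(\mu,s)=\langle\partial_\mu T(\mu,s),\,N(\mu,s)\rangle+\langle\partial_\mu\Phi_1(\mu,s),\,\partial_s N(\mu,s)\rangle .
\]
Evaluating at $s=0$, the second term drops out because $\partial_\mu\Phi_1(\mu,0)=0$, so it remains to show that $\partial_\mu T(\mu,0)=0$.

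This last step is where the geometry of $\gamma^*$ enters. By Lemma~\ref{lem: gamma uniq} the segment $\overline{A\,B(\mu)}$ meets $\Gamma_1(\mu)$ perpendicularly at $A$; since $\overline{A\,B(\mu)}$ lies on the positive horizontal axis, the unit tangent $T(\mu,0)$ is forced onto the vertical axis, i.e.\ $T(\mu,0)\in\{(0,1),(0,-1)\}$ for every $\mu$. As $\mu\mapsto T(\mu,0)$ is continuous on the connected interval $[-1,1]$, it is constant, whence $\partial_\mu T(\mu,0)=0$ and therefore $\bn_1'(\mu,0)=0$.

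I do not expect a genuine obstacle here: the statement is essentially a bookkeeping consequence of the normalization together with the orthogonality of $\gamma^*$ to $\Gamma_1$. The only mildly delicate point is the interchange of the $\mu$- and $s$-derivatives, which is justified by the stated smoothness of the parametrization $\Phi$; alternatively one can argue in graph coordinates, writing $\Gamma_1(\mu)$ near $A$ as a graph over the fixed vertical tangent line at $A$, where the graph function and its first derivative vanish at the origin uniformly in $\mu$, and $\bn_1(\mu,s)$ is (to leading order) a multiple of that graph function.
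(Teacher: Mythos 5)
Your proposal is correct and follows essentially the same route as the paper: $\Phi_1(\mu,0)=A$ fixed gives $\partial_\mu\Phi_1(\mu,0)=0$ and hence $\bn_1(\mu,0)=0$, and the perpendicularity of $\overline{AB(\mu)}$ to $\Gamma_1(\mu)$ at the origin forces the unit tangent $\partial_s\Phi_1(\mu,0)$ to be the (fixed) vertical vector, so both terms in the product-rule expansion of $\bn_1'(\mu,0)$ vanish. Your continuity argument for why $T(\mu,0)$ is constant is a slightly more explicit version of the paper's observation that it is always the unit downward vertical vector.
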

\begin{proof}
$\Phi_1(\mu, 0)=A$ implies that $\p_\mu \Phi_1(\mu, 0)=0$, and hence
\beqn
\bn_1(\mu, 0)=\langle \p_\mu\Phi_1(\mu, 0), \ N_1(\mu, 0) \rangle=0.
\eeqn
Furthermore, for any $\mu\in [-1, 1]$, the tangent vector
of $\Gamma_1(\mu)$ at $A$, denoted by $\p_s\Phi_1(\mu, 0)$,
is a downward vertical vector at $A$ in $\IR^2$.
Here we take the downward direction because $\p\Omega_\mu$
is parametrized along the counter-clockwise direction.
Therefore, $\p_\mu \p_s \Phi_1(\mu, 0)$ is perpendicular to 
the out-going unit normal vector $N_1(\mu, 0)$ and hence
\beqn
\bn_1'(\mu, 0)=\langle \p_\mu \p_s \Phi_1(\mu, 0), \ N_1(\mu, 0) \rangle+
\langle \p_\mu\Phi_1(\mu, 0), \ \p_s N_1(\mu, 0) \rangle  =0.
\eeqn
The proof of this lemma is completed.
\end{proof}

By Lemma~\ref{lem: main n vanish 00},
the Squash Rigidity Theorem is reduced to showing 
that $\bn(\mu, \cdot)\equiv 0$ on $\Gamma_1\cup \Gamma_2$
for any $\mu\in [-1, 1]$. 
Since the vanishing property of $\bn$ does not depend on the parametrization
and the choice of $\mu_0$ is arbitrary, 
it suffices to show that 
$\bn_1(\mu_0, s)=0$ and $\bn_2(\mu_0, t)=0$.
For simplicity,
we shall omit $\mu_0$ and simply write $\bn_1(s)$ and $\bn_2(t)$.
Note that $\bn_1$ and $\bn_2$ are both $C^m$ smooth.

\bigskip

\section{Proof of Squash Rigidity Theorem}\label{sec: proof of thm1}

\subsection{Reduction of Squash Rigidity Theorem}

For any homothety ratio $\chi>0$, 
let $\left\{\hOmega_\mu\right\}_{|\mu|\le 1}$ be a $C^1$
one-parameter family in $\cM^\omega_{ss}(\chi)$, and
$\left\{\Omega_\mu\right\}_{|\mu|\le 1}$ be its normalized family.
As introduced in the previous section, the deformation function
is denoted by $\bn_1(s)$ on $\Gamma_1$, and by
$\bn_2(t)$ on $\Gamma_2$. Note that both $\bn_1(s)$ and
$\bn_2(t)$ are analytic. By Lemma~\ref{lem: main n vanish 00},
the Squash Rigidity Theorem
%~\ref{thm: main1}
is reduced to the following.

\begin{proposition}\label{prop: main n vanish}
If the family $\left\{\Omega_\mu\right\}_{|\mu|\le 1}$ is dynamically
isospectral, then $\bn_1\equiv 0$ and $\bn_2\equiv 0$.
\end{proposition}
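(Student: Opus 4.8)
The plan is to separate Proposition~\ref{prop: main n vanish} into an easy analytic reduction and the genuinely hard flatness statement, and to prove the latter by induction on the order of the derivative. For the reduction: since $\Omega_\mu\in\cM^\omega_{ss}\subset\cM^\infty_{ss}$, the deformation functions $\bn_1(s)$ and $\bn_2(t)$ are real-analytic on the \emph{connected} arcs $\Gamma_1$ and $\Gamma_2$. Hence it suffices to show that $\bn_1$ vanishes to infinite order at $s=0$ (the arclength coordinate of $A$) and $\bn_2$ at $t=0$ (that of $B$): vanishing of all derivatives of a real-analytic function at one point of a connected interval forces it to vanish identically. This infinite-order vanishing is exactly Theorem~\ref{thm: main1'} specialized to the analytic subclass, so the remainder of the argument is devoted to it, using only the isospectral relations \eqref{def ell12}, \eqref{sum G vanish} together with the shadowing estimates of Section~\ref{sec:palindromic}.

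The flatness is proved by induction on $d\ge 0$. The cases $d=0,1$ are Lemma~\ref{lem: arclength} ($\bn_1(0)=\bn_1'(0)=0$) and its mirror image on $\Gamma_2$ — valid because $B$ lies on the horizontal axis and $\overline{AB}\perp\Gamma_2$ at $B$, so the tangent to $\Gamma_2$ at $B$ is again vertical — supplemented, if one prefers, by $\bn_1(0)+\bn_2(0)=0$ from \eqref{def ell12}. For the inductive step, fix $d\ge 2$ and assume $\bn_1^{(j)}(0)=\bn_2^{(j)}(0)=0$ for all $j<d$, so that $\bn_i(u)=\tfrac{c_i}{d!}\,u^{d}+\cO(u^{d+1})$ with $c_i=\bn_i^{(d)}(0)$; the goal is $c_1=c_2=0$. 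The input is the pair of identities $\sum_{\gamma_n}G=0$ and $\sum_{\hgamma_n}G=0$ (from \eqref{sum G vanish}, using that these codes are good and that isospectrality plus Lemma~\ref{lem: Darboux} make $\Delta$ constant). I would expand each sum with Lemma~\ref{lem: finer est}: along $\gamma_n$ the collisions on $\Gamma_1$ have $s$-coordinates $s_n(k)=C_s\lambda^{-k}+C_{s,k}\lambda^{k-n}+\cO(\lambda^{-2k}+\lambda^{2(k-n)})$, and similarly on $\Gamma_2$; by the inductive hypothesis the summands $\bn_1(s_n(k))\cos\varphi_n(k)$ and $\bn_2(t_n(k))\cos\psi_n(k)$ decay geometrically in $k$, so the sums converge. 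Letting $n\to\infty$ (the two palindromic halves of $\gamma_n$, joined at the inner perpendicular collision, each converge to the homoclinic semi-orbit $\gamma_\infty$) yields homoclinic relations, and the leading-order-in-$\lambda^{-n}$ correction to them produces two linear equations in $(c_1,c_2)$ whose coefficients are explicit combinations of $\sum_{k\ge 1}(C_s\lambda^{-k})^d$, $\sum_{k\ge 0}(C_t\lambda^{-k})^d$, the structure constants of \eqref{C relation}, and $a_z,a_w,\lambda$. The Linearized Isospectral Operator of Section~\ref{sec:LIO} is precisely the bookkeeping device that packages this family of relations; showing its kernel consists only of functions flat at $A$ and $B$ is the content of the step.

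The main obstacle is this inductive step, and within it two points. First, one must isolate the ``signal'': the part of $\sum_{\gamma_n}G$ that scales with $\bn_i^{(d)}(0)$ comes from the collisions accumulating on $A$ and $B$, and it must be disentangled from the $\cO(1)$ contributions of the two collisions on $\Gamma_3$ and of the outer perpendicular collision on $\Gamma_2$ (whose deformation values on the flat walls need not vanish a priori), as well as from the shadowing errors $\cO(\lambda^{2(k-n)})$ accumulated over $\sim n$ bounces; I would handle this by forming the right linear combinations of the relations for $\gamma_n$, $\hgamma_n$, $\gamma^*$ (and, if needed, differences in $n$) so that the unwanted ``fixed'' terms cancel. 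Second, one must check that the resulting $2\times 2$ system in $(c_1,c_2)$ is non-singular; this is a computation with the constants $C_s,C_\varphi,C_t,C_\psi$, $\theta_z,\theta_w$, $a_z,a_w,\lambda$ from Section~\ref{sec: period two} and Lemma~\ref{lem: finer est}, in which the strict inequalities $a_z<-1$, $a_w<-1$ forced by the defocusing mechanism \eqref{def defocusing} guarantee non-vanishing of the relevant determinant. A minor nuisance to track is the loss of regularity in Lemma~\ref{lem: linearization} (only $C^{1,1/2}$), but since at each stage one compares only finitely many bounces to the linearized saddle dynamics, the $\cO(|x|^{1/2})$ corrections are dominated by the geometric factors and do not affect the leading order.
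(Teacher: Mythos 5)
Your reduction (analyticity on the connected arcs plus infinite-order vanishing at $A$ and $B$), your induction skeleton, the base case via Lemma~\ref{lem: arclength} and \eqref{def ell12}, and your identification of the two inputs $\sum_{\gamma_n}G=\sum_{\hgamma_n}G=0$ all match the paper. But the inductive step has a genuine gap exactly where you write ``I would handle this by forming the right linear combinations \dots so that the unwanted fixed terms cancel.'' That sentence is a placeholder for the paper's central device, which you do not supply. The difficulty is quantitative: the ``head'' of $\gamma_n$ (the collisions $x_n(0)$, $y_n(0^\pm)$ and the first $\ell$ bounces) contributes $\cO(1)$ quantities that converge to their homoclinic limits only at rate $\lambda^{-n}$, while the ``signal'' carried by the tail is of size $\lambda^{-d\ell}$; for $d$ large a single relation per $n$, or a naive subtraction of the homoclinic limit, is swamped by the head error. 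The paper resolves this by (i) using the palindromic symmetry to give the partial sum $S_n(\ell)$ of \eqref{def S_n} a second representation \eqref{def S_n 1} purely in terms of the tail, and (ii) taking the Lagrange-interpolation combination $\sum_{j=0}^{m}A_{m,j}S_{2\ell+j}(\ell)$ with $m=d+1$ (Lemma~\ref{lem: cancel}): since for each fixed $k\le\ell$ the points $\{x_{2\ell+j}(k)\}_{0\le j\le m}$ lie asymptotically on a line parametrized by $u_j=\lambda^{k-2\ell-j}$ (Lemma~\ref{lem: finer est}), an $m$-point interpolation of the smooth function $G$ along that curve annihilates the entire head to order $\cO(\lambda^{-m\ell})$, leaving only the tail, which yields \eqref{compare d even 1}. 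Without this (or an equivalent cancellation to \emph{all} polynomial orders simultaneously), the step does not close.

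A second, smaller error: you attribute the nondegeneracy of the resulting $2\times 2$ system to the defocusing inequalities $a_z,a_w<-1$. Those inequalities only guarantee hyperbolicity and $C_s,C_t\ne 0$; the actual nonvanishing of the determinant $C_s^dC_t^d(\cA_1^2-\cA_2^2)$ rests on the combinatorial identities for the interpolation coefficients $A_{d+1,j}$ (Lemma~\ref{coefficeints}), and requires a genuine case split between $d$ even and $d$ odd, with different weight functions $w$ in \eqref{defnw} and the explicit computations showing $\cA_2=\lambda^{d/2}\cA_1\ne0$ (resp.\ $\cA_1=\cA_2(\lambda^{-d}+\cO(\lambda^{-d-1}))\ne0$). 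Your proposal gives no route to these facts.
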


Note that the dynamically isospectral property implies
\eqref{sum G vanish}, that is, the sum of $G$ vanishes over
the unique maximal periodic orbit $\gamma_\bi$
which corresponds to a good symbolic code $\bi$.
In the analytic class $\cM^\omega_{ss}(\chi)$ of Bunimovich
squash-type stadia, it turns out that Condition
\eqref{sum G vanish} over the period two orbit $\gamma^*$ and
the palindromic orbits $\gamma_n$
are sufficient to establish the dynamical spectral rigidity. More precisely, we have

\begin{proposition}\label{lem van der}
If $\sum_{\gamma^*} G=\sum_{\gamma_n} G=0$ for any $n\ge 1$,
then
\beqn
\bn_1^{(d)}(0)=\bn_2^{(d)}(0)=0, \ \ \text{for any} \ d\ge 0.
\eeqn
\end{proposition}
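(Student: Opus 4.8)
The plan is to prove the two flatness statements $\bn^{(d)}_1(0)=0$ and $\bn^{(d)}_2(0)=0$ simultaneously by induction on $d$ (here $s=t=0$ is the base point of $\gamma^*$). For $d=0,1$: Lemma~\ref{lem: arclength} already gives $\bn_1(0)=\bn_1'(0)=0$; combining this with the hypothesis $\sum_{\gamma^*}G=\bn_1(0)+\bn_2(0)=0$ (see \eqref{def ell12}) yields $\bn_2(0)=0$, whence $\p_\mu\Phi_2(\mu,0)=0$ because $B(\mu)$ is constrained to the horizontal axis while the outer normal $N_2$ at $B$ is horizontal; then the computation of Lemma~\ref{lem: arclength}, applied to $\Gamma_2$, gives $\bn_2'(0)=0$. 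Hence $\bn_i(s)=\sum_{d\ge2}c^{(i)}_ds^d+o(s^N)$ for every $N$ and $i=1,2$.

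\textbf{The inductive step via the palindromic orbits.} Fix $d\ge2$ and assume $c^{(1)}_j=c^{(2)}_j=0$ for all $2\le j<d$. I would use the identities $\sum_{\gamma_n}G=0$ and $\sum_{\hgamma_n}G=0$, valid for every $n\ge1$ by \eqref{sum G vanish}, and let $n\to\infty$. Using time reversibility \eqref{time-rev s} one writes $\sum_{\gamma_n}G$ as the ``initial stage'' contribution (the perpendicular hit on $\Gamma_2$ together with the two symmetric hits on $\Gamma_3$; by Lemma~\ref{lem: rough esta} this, and the middle collision, stabilize at rate $\cO(\lambda^{-2n})$ and so affect only lower scales) plus the palindromic sum over the collisions on $\Gamma_1\cup\Gamma_2$. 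Into the latter one substitutes the truncated Taylor expansions of $\bn_1,\bn_2$ (so the lowest surviving degree is $d$) and the precise asymptotics of Lemmas~\ref{lem: finer est} and \ref{coordinates}: on $\Gamma_1$, $x_\infty(k)=\lambda^{-k}(C_s,C_\varphi)+\cO(\lambda^{-2k})$ with $x_n(k)-x_\infty(k)=\cO(\lambda^{k-n})$, and symmetrically on $\Gamma_2$ with $(C_t,C_\psi)$. Summing the geometric series in $\lambda^{-k}$ and $\lambda^{k-n}$ yields an asymptotic expansion of $\sum_{\gamma_n}G$ along the scales $\lambda^{-mn}$ (with polynomial-in-$n$ resonances); requiring that it vanish identically in $n$ isolates, at the relevant order, a linear relation $\alpha_d\,\bn^{(d)}_1(0)+\beta_d\,\bn^{(d)}_2(0)=0$ in which $\alpha_d$ and $\beta_d$ are nonzero multiples of $C_s^{\,d}$ and $C_t^{\,d}$ respectively, all other contributions being either already known to vanish by the inductive hypothesis or of strictly lower exponential order. (One may also first unfold to the double cover $\widetilde\Omega$ of Lemma~\ref{lem: gamma_n uniq}, so that $\gamma_n$ becomes an orbit with all collisions on convex arcs, removing $\bn_3,\bn_4$ from the bookkeeping.)

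\textbf{Closing the induction.} Running the same computation for the companion family $\hgamma_n$ of \eqref{def palin1}, whose homoclinic orbit approaches $\gamma^*$ with $\Gamma_1$ and $\Gamma_2$ interchanged, produces a second relation $\widehat\alpha_d\,\bn^{(d)}_1(0)+\widehat\beta_d\,\bn^{(d)}_2(0)=0$. It then remains to verify that the $2\times2$ coefficient matrix is nonsingular; this is where $\lambda>1$ (so the scales $\lambda^{-mn}$ are genuinely separated and $\lambda^d-1\ne0$) and the non-degeneracy relations \eqref{C relation}, together with Lemma~\ref{sinfinityphi1} (which pins down $\Theta_z,\Theta_w$ and gives $\Theta_z\Theta_w=\lambda^{-1}$, so in particular $C_s,C_t\ne0$ and the ratios $C_t/C_s$ for $\gamma_n$ and for $\hgamma_n$ are distinct), enter. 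Nonsingularity forces $\bn^{(d)}_1(0)=\bn^{(d)}_2(0)=0$, completing the induction. Since only the $C^\infty$ Taylor data of $\bn_1,\bn_2$ was used, the statement holds in $\cM^\infty_{ss}$, as asserted.

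\textbf{The main difficulty.} The crux is the asymptotic bookkeeping in the second step. The homoclinic expansion carries $\cO(\lambda^{-2k})$ tails and the shadowing carries $\cO(\lambda^{k-n})$ errors, so after expanding $\bn_i$ to degree $d$ and summing over $0\le k\lesssim n$ these produce numerous cross terms of comparable magnitude, as well as resonant terms with polynomial factors $n^j\lambda^{-mn}$. One must argue that each such term is either attached to a derivative of order $<d$ (hence zero by induction) or genuinely subleading relative to the scale carrying $\bn^{(d)}_i(0)$, while also tracking the parity of $n$, which governs whether the perpendicular turning collision of $\gamma_n$ lies on $\Gamma_1$ or on $\Gamma_2$. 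Extracting the clean leading relation from this tangle, and then checking that the resulting determinant is nonzero, is the heart of the proof.
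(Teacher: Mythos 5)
Your base case is sound (your observation that $\bn_2(0)=0$ together with $B(\mu)$ being confined to the horizontal axis already forces $\p_\mu\Phi_2(\mu,0)=0$ is a legitimate variant of the paper's route via the constancy of $\Delta(\mu;\overline{12})$). The inductive step, however, has a genuine gap at exactly the point you flag as ``the heart of the proof'', and the mechanism you propose does not close it. The obstruction is this: in $\sum_{\gamma_n}G=0$ the collisions $x_n(k),y_n(k)$ with $k$ bounded (and, by time reversal, $k$ near $n$) sit at distance comparable to $\lambda^{-k}|C_s|$ from the period-two orbit, i.e.\ at macroscopic distance. The values $\bn_1(s_\infty(k))$, $\bn_2(t_\infty(k))$ and all their derivatives at these points are completely unconstrained by the inductive hypothesis, which only controls the jet at $s=t=0$; in the $C^\infty$ category they are simply unknown data. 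Through the shadowing corrections $x_n(k)-x_\infty(k)=\lambda^{k-n}(C_{s,k},C_{\varphi,k})+\cO(\lambda^{2(k-n)})$ of Lemma~\ref{lem: finer est}, these unknowns inject contributions into $\sum_{\gamma_n}G$ at every scale $\lambda^{-mn}$ --- the same integer scales at which the middle collisions carry $\bn_i^{(d)}(0)$. Moreover, if one first lets $n\to\infty$ at a fixed truncation depth $\ell$, the identity degenerates: the limit of the head sum equals minus the limit of the middle sum by construction, so the leading order reads $0=0$. Hence ``requiring the expansion to vanish identically in $n$'' does not isolate a relation whose coefficients are multiples of $C_s^{d}\bn_1^{(d)}(0)$ and $C_t^{d}\bn_2^{(d)}(0)$ alone: the coefficients of the non-resonant scales are contaminated by homoclinic data that is neither zero by induction nor of lower exponential order.

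The paper's proof supplies precisely the missing device: a cancellation \emph{across several orbits of the family}, not within one. It introduces $S_n(\ell)$, which has two expressions --- the (unknown) head sum \eqref{def S_n} over $k\le\ell$, and, via $\sum_{\gamma_n}G=0$ and palindromicity, the middle sum \eqref{def S_n 1} over $\ell+1\le k\le n-\ell+1$, where the degree-$d$ Taylor data does suffice. Because for each fixed $k\le\ell$ the points $\{x_{2\ell+j}(k)\}_{0\le j\le d+1}$ lie asymptotically on a line through $x_\infty(k)$ with parameter $\lambda^{-j}$, the Lagrange-interpolation combination $\sum_{j}A_{d+1,j}\,S_{2\ell+j}(\ell)$ with the coefficients \eqref{def Adj} annihilates the unknown head contributions to order $\cO(\lambda^{-(d+1)\ell})$ (Lemma~\ref{lem: cancel}); evaluated on the middle expression and letting $\ell\to\infty$, it yields the clean relation $\cA_1C_s^{d}\bn_1^{(d)}(0)+\cA_2C_t^{d}\bn_2^{(d)}(0)=0$, after which the remaining (nontrivial) work is to show $\cA_1,\cA_2\ne0$ and $\cA_1\ne\pm\cA_2$, with separate interpolation weights for $d$ even and odd. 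Without some such combination over $\gamma_{2\ell},\dots,\gamma_{2\ell+d+1}$ that kills the homoclinic head to order $d+1$, your single-orbit asymptotic expansion cannot be decoupled, so the proposal as written does not establish the inductive step.
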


We remark that Proposition~\ref{lem van der} also holds in the class $\cM^\infty_{ss}(\chi)$.
In the analytic class $\cM^\omega_{ss}(\chi)$,
it immediately follows that $\bn_1$ and $\bn_2$ both vanish since they are analytic,
which proves Proposition~\ref{prop: main n vanish} and thus
Squash Rigidity Theorem. In the rest of this section, we prove
Proposition~\ref{lem van der}.

\subsection{Cancellations by Interpolations}

\subsubsection{Sums of $G$ over $\gamma_n$}\label{sec: sum G}

Recall that $\gamma_n$ is the sequence of palindromic periodic orbits that we introduce
in Section~\ref{sec: palin}, with collision points
\begin{align*}
& y_{n}(0^-) \mapsto x_n(0) \mapsto y_n(0) \mapsto
x_{n}(1) \mapsto y_{n}(1)  \mapsto \dots
\mapsto x_{n}(n) \mapsto y_{n}(n)  \mapsto x_{n} (n+1).
\end{align*}

Note that we have
$y_n(0)=(t_n(0), \psi_n(0))$ and $y_{n}(0^-)=(t_n(0), -\psi_{n}(0))$.
Then \newline 
$G(y_n(0^-))=G(y_n(0))$,
due to the special formula of $G$, i.e., cosine function for the angle variable.
Moreover, the time-reversibility \eqref{time-rev s} implies that
{\allowdisplaybreaks
\beq\label{rev G}
\begin{split}
G(x_n(n+2-k))=G(x_n(k)), \ \ & \ \ k=1, \dots, n+1, \\
G(y_n(n+1-k))=G(y_n(k)), \ \ & \ \ k=1, \dots, n.
\end{split}
\eeq
}For a sufficiently large integer $\ell>0$ and for any integer $n\ge 2\ell$, 
we define the global sum and local sum of $G$ over $\gamma_n$ as follows.
\begin{itemize}
\item \textbf{Global sum:}
we define the \emph{global sum} of $G$ over the points of $\gamma_n$ (with minus signs)
away from the period two orbit as
{\allowdisplaybreaks
\beqn\label{def S_n global}
\begin{split}
S_n^{global}(\ell):= & -G(y_n(0^-))-G(x_n(0))-G(y_n(0))-  \sum_{k=1}^{\ell}\left[ G(x_n(k)) + G(y_n(k))\right] \\
& -  \sum_{k=1}^{\ell}\left[ G(x_n(n+2-k)) + G(y_n(n+1-k))\right] 
\end{split}
\eeqn
}This global sum contains $(4\ell+3)$ points. By \eqref{rev G} and the fact that $G(y_n(0^-))=G(y_n(0))$,
we get
\beqn
S_n^{global}(\ell)=-G(x_n(0))-2G(y_n(0)) -2\sum_{k=1}^{\ell} \left[ G(x_n(k)) + G(y_n(k)) \right].
\eeqn
\item \textbf{Local sum:}
we define the \emph{local sum} of $G$ over the points of $\gamma_n$ near  the period two orbit as
\beqn
S_n^{local}(\ell):=\sum_{k=\ell+1}^{n-\ell+1}  G(x_{n}(k)) + \sum_{k=\ell+1}^{n-\ell} G(y_{n}(k)).
\eeqn

Note that the local sum contains $(2n-4\ell+1)$ points. 
By convention, the above second sum is set to be zero if $n=2\ell$.
\end{itemize}
By the assumption of Proposition~\ref{lem van der}, i.e., 
$\sum_{\gamma_n} G=0$,
we get $-S_n^{global}(\ell) + S_n^{local}(\ell)=0$. 
For simplicity, we further denote 
\beqn
S_n(\ell) := S_n^{global}(\ell)  = S_n^{local}(\ell),
\eeqn
and thus $S_n(\ell)$ has two expressions:
{\allowdisplaybreaks
\begin{eqnarray}
S_n(\ell)  
&=& -G(x_n(0))-2G(y_n(0)) -2\sum_{k=1}^{\ell} \left[ G(x_n(k)) + G(y_n(k)) \right]  \label{def S_n}
\\
&=& \sum_{k=\ell+1}^{n-\ell+1}  G(x_{n}(k)) + \sum_{k=\ell+1}^{n-\ell} G(y_{n}(k)). \label{def S_n 1}
\end{eqnarray}
}In the next two subsections, we shall introduce the (weighted) Lagrange interpolation method 
and use it to prove the cancellations for the global sum representation of $S_n(\ell)$.
Then in Section~\ref{sec: proof lem van der}, we shall use such cancellations  to extract 
information about the derivatives $\bn_1^{(d)}(0)$ and $\bn_2^{(d)}(0)$ by applying 
the Taylor expansion for the local sum representation of $S_n(\ell)$.

\subsubsection{Lagrange Polynomial Interpolation and Weighted Interpolation}

In this subsection, we first recall the well known Lagrange polynomial interpolation for
functions of one variable (see e.g. \cite{SuMa03}, \S 6.2).
More precisely, for any integer $m\ge 2$ and real numbers $0\le u_m< \dots <u_1 \le 1$,  
the fundamental Lagrange polynomials over the data set $\{u_1, \dots, u_m\}$ are defined by
\beq\label{def Lagrange 0}
p_j(u)=p_j(u; u_1, \dots, u_m):=  \prod_{\substack{1\le i\le m\\ i\ne j}} \frac{u-u_i}{u_j-u_i}
\eeq
for $u\in \IR$ and $j=1, \dots, m$.
Note that these polynomials stasify
the Lagrange basis property, i.e., 
$p_j(u_j)=1$ and $p_j(u_i)=0$ if $i\ne j$.
The Lagrange polynomial interpolation provides an approximation 
of a $C^m$ smooth function in the space of degree $(m-1)$ polynomials,
with a higher order error.

\begin{lemma}\label{Lagrange interpolation 0}
For any function $g\in C^{m}[0, 1]$ and for any $u\in [0, 1]$,
there exists $\overline{u}$ in the smallest interval that contains $u_1, \dots, u_m$ and $u$ such that
\beqn\label{Lagrange02}
g(u)=\sum_{j=1}^m g(u_j) p_j(u)  +  \frac{g^{(m)}(\overline{u})}{m! } \prod_{j=1}^m (u-u_j).
\eeqn
\end{lemma}

We also need the weighted Lagrange interpolation, which can be regarded as 
a variant of the standard Lagrange polynomial interpolation (see e.g.~\cite{Car10}). More precisely, 
let $w\in C^m[0,1]$ be a positive function, and define the fundamental weighted Lagrange interpolants
over the data set $\{u_1, \dots, u_m\}$   by
\beq\label{def Lagrange}
q_j(u)=q_j(u; u_1, \dots, u_m; w):=\frac{w(u_j)}{w(u)} p_j(u; u_1, \dots, u_m),
\eeq
for $u\in \IR$ and $j=1, \dots, m$, where $p_j(u; u_1, \dots, u_m)$
is the standard  fundamental Lagrange polynomials given by \eqref{def Lagrange}.
Note that we still have the basis property for $\{q_j(u)\}_{1\le j\le m}$.
In particular case when $w\equiv 1$, 
we have that  $q_j(u)=p_j(u)$.

Appying Lemma~\ref{Lagrange interpolation 0}
for the function $w(u)g(u)$ and then dividing $w(u)$ on both sides, we obtain 
the weighted Lagrange interpolation as follows.

\begin{lemma}\label{Lagrange interpolation}
For any function $g\in C^{m}[0, 1]$ and for any $u\in [0, 1]$,
there exists $\overline{u}$ in the smallest interval that contains $u_1, \dots, u_m$ and $u$ such that
\beqn\label{Lagrange01}
g(u)=\sum_{j=1}^m g(u_j) q_j(u)  +  \frac{(wg)^{(m)}(\overline{u})}{m!w(u)} \prod_{j=1}^m (u-u_j).
\eeqn
\end{lemma}

\begin{remark} 
The Lagrange interpolation, either polynomial or weighted, heavily depends on the distributions of data points 
$u_1, \dots, u_m$. For our purpose, we shall consider geometric data points, i.e., $u_j=\lambda^{-j}u_0$
for some $u_0\in (0, 1]$. Putting $u=u_0$ in \eqref{Lagrange01}, we notice that $q_j(u_0)$, $j=1, \dots, m$,
are in fact  constants (for our special choices of $w_m(u)$ in \eqref{defnw}), 
and hence we shall obtain a linear combination of the function values 
$g(u_0), g(u_1), \dots, g(u_m)$ up to a higher order error. 
\end{remark}

\subsubsection{Cancellations by Lagrange Interpolations}

We now apply the weighted Lagrange interpolation to show the following cancellations
for a linear combination of $\{S_{2\ell+j}(\ell)\}_{0\le j\le m}$.
The key observation is from Lemma~\ref{lem: finer est'},
which suggests that for each $1\le k\le \ell$,
the points $\{x_{2\ell+j}(k)\}_{0\le j\le m}$ (resp. the points $\{y_{2\ell+j}(k)\}_{0\le j\le m}$
for each $k=0^\pm, 0 ,1, \dots$)
are asymptotically lying on a line. In the following applications, we choose the weighted function to be 
\beq\label{defnw}
w_m(u)=1 \text{ if $m$ is odd}; \,\,\,\,\,
w_m(u)= u \text{ if $m$ is even}.
\eeq

\begin{lemma}\label{lem: cancel}
For any integer $m\ge 2$ and sufficiently large $\ell\ge 1$,
we have 
\beq\label{cancellation}
\sum_{j=0}^{m}  A_{m, j}w_m( \lambda^{-j}) S_{2\ell+j}(\ell)
= \cO\left( \lambda^{-m\ell}\right),
\eeq
where  
\beq\label{def Adj}
A_{m, 0}=-1, \ \text{and} \ 
A_{m, j} = \prod_{\substack{1\le i\le m\\ i\ne j}} \frac{\lambda^{i}-1}{\lambda^{i-j}-1},
\ \ \text{for} \  j=1, \dots, m.
\eeq
\end{lemma}

\begin{proof}
For any $\ell\ge 1$ and $1\le k\le \ell$,
we set
$
u_j=u_j(k,\ell):=\lambda^{k-2\ell -j}
$
for $j=0, 1, \dots, m$.
By Lemma~\ref{lem: finer est'}, we have
\beqn
x_{2\ell+j}(k)-x_\infty(k) = u_j
\left[ v_\infty(2k) + o(1) \right].
\eeqn
This expression means that the points $\{x_{2\ell+j}(k)\}_{0\le j\le m}$
asymptotically lie on the straight line which goes through 
$x_\infty(k)$ and has direction vector $v_\infty(2k)$. 
Recall that $M$ is the phase space defined in \eqref{phase space}.
We then define a piecewise linear curve $\sigma_0: [0, u_0]\to M$ by setting
\beqn
\sigma_0(0)=x_\infty(k),  \ \
 \text{and} \  \ \sigma_0(u_j)=x_{2\ell+j}(k), \ \text{for} \ j=0, 1, \dots, m,
\eeqn
and connect two consecutive points by line segments. Then
\beqn
|\sigma_0'(u)|\le 2\left\|v_\infty(2k)\right\|, \ \ \text{and} \ \ |\sigma_0^{(i)}(u)|=0 \ \text{for} \ i\ge 2,
\eeqn
for all $u\in [0, u_0]$ except at possibly corner points $u_j$ for $j=1, \dots, m$.
Moreover, for sufficiently large $\ell\ge 1$,
the angle at each corner point can be made very obtuse and close to $180$ degree.

For any $\eps>0$, we can obtain a $C^{\infty}$ smooth curve $\sigma_\eps: [0, u_0]\to M$
by smoothening the curve $\sigma_0$ near these corner points, such that
\begin{itemize}[leftmargin=0.75cm]
\item[(1)] $\sigma_\eps$ uniformly converges to $\sigma_0$ in the $C^0$ topology. In particular,
$\sigma_\eps(0)=x_\infty(k)$, $\sigma_\eps(u_0)=x_{2\ell}(k)$,
and
$\sigma_\eps(u_j)\to x_{2\ell+j}(k)$ as $\eps\to 0$ for $1\le j\le m$.
\item[(2)] $\sigma_\eps$ is $C^\infty$ flat at the point $u=0$, i.e., 
$\sigma_\eps^{(n)}(0)$ vanishes for any $n\ge 1$.
\item[(3)] there is a constant $D_{m}>0$, which only depends on $m$, such that
the derivatives of $\sigma_\eps$ are uniformly bounded by $D_{m}$ up to order $m+1$.
\end{itemize}

Let $\{q_j(u)\}_{1\le j\le m}$ be the fundamental weighted Lagrange interpolants 
on the data set $\{u_1, \dots, u_{m}\}$ given by \eqref{def Lagrange}, 
where the polynomials $\{p_j(u)\}_{1\le j\le m}$
are given by \eqref{def Lagrange 0} and 
the weight function $w_m$ is defined as in (\ref{defnw}). 
It is straightforward to verify that $p_j(u_0)=A_{m, j}$, which are given by \eqref{def Adj}.
Also, 
$\dfrac{w_m(u_j)}{w_m(u_0)}=w_m(\lambda^{-j})$ and thus
$q_j(u_0)=A_{m, j}w_m(\lambda^{-j})  $.

We now consider the smooth function
$g_\eps: [0, u_0]\to \IR$ given by
$
g_\eps(u)=G\circ \sigma_\eps(u).
$
Applying  Lemma~\ref{Lagrange interpolation} to this function
with evaluation at $u=u_0$, there exists $ \overline{u}\in [0, u_0]$
such that
{\allowdisplaybreaks
\begin{eqnarray}
g_\eps(u_0)
&=&\sum_{j=1}^{m} A_{m, j}w_m(\lambda^{-j})   g_\eps(u_j)
+ \frac{(w_m g_\eps)^{(m)}(\overline{u})}{m !w_m(u_0)}  \prod_{j=1}^{m} (u_0-u_j) \nonumber \\
&=&\sum_{j=1}^{m} A_{m, j} w_m(\lambda^{-j})  g_\eps(u_j)
+\cO\left(\lambda^{m(k-2\ell)}\right). \label{error g eps}
\end{eqnarray}
}The above error estimate for the last term is due to the following facts:
\begin{itemize}
\item $|u_0-u_j|\le u_0= \lambda^{k-2\ell}$ and thus
$ \left|\prod\limits_{j=1}^{m} (u_0-u_j) \right| \le m\lambda^{k-2\ell}$;
\item if $m$ is odd, then $w_m(u)\equiv 1$ and thus
\beqn
\left|\frac{(w_m g_\eps)^{(m)}(\overline{u})}{m !w_m(u_0)} \right|
\le \|(g_\eps)^{(m)} \|_\infty \le 
\|G\|_{C^{m}}\cdot m^m \max\{1, \|\sigma_\eps\|_{C^{m}}\}^m 
\le m^m D_m^m  \|G\|_{C^{m}};
\eeqn
if $m$ is even, then $w_m(u)\equiv u$. We notice that $\overline{u}\le u_0$, and  
\newline 
$g_\eps^{(m-1)}(0)=(G\circ \sigma_\eps)^{(m-1)}(0)=0$. Since $\sigma_\eps$ is $C^\infty$ flat at $u=0$, then we have
{\allowdisplaybreaks 
\begin{eqnarray*}
\left|\frac{(w_m g_\eps)^{(m)}(\overline{u})}{m !w_m(u_0)} \right| &=&
\left|\frac{ \overline{u} g_\eps^{(m)}(\overline{u}) + m  g_\eps^{(m-1)}(\overline{u})}{m ! u_0} \right| \le \left|g_\eps^{(m)}(\overline{u})  \right| +  
 \left|\frac{ g_\eps^{(m-1)}(\overline{u}) }{\overline{u}} \right| \\
 &\le & 2 \|(g_\eps)^{(m)} \|_\infty\le 2m^m D_m^m  \|G\|_{C^{m}}. 
\end{eqnarray*}
}
\end{itemize}
Letting $\eps\to 0$ in \eqref{error g eps}, we get
\beqn 
-G(x_{2\ell+j}(k)) + \sum_{j=1}^{m} A_{m, j} w_m(\lambda^{-j})   G(x_{2\ell+j}(k))
=\cO\left(\lambda^{m(k-2\ell)}\right).
\eeqn
In a similar fashion, we also have for any $k=0^\pm, 0, 1, \dots, \ell$, 
\beqn
- G(y_{2\ell+j}(k)) +
\sum_{j=1}^{m} A_{m, j}w_m(\lambda^{-j})   G(y_{2\ell+j}(k))=\cO\left(\lambda^{m(k-2\ell)}\right).
\eeqn
Therefore, by the definition of $S_{2\ell+j}(\ell)$ given in \eqref{def S_n},
we obtain \eqref{cancellation} by noticing that
the error term is
$2\sum_{k=0}^\ell \cO\left(\lambda^{m(k-2\ell)}\right)=\cO\left( \lambda^{-m\ell}\right)$.
\end{proof}

We shall need the following property for the coefficients $A_{m, j}$ given in \eqref{def Adj}.

\begin{lemma}\label{lem coefficients}
The coefficients $\{A_{m, j}\}_{0\le j\le m}$ in \eqref{def Adj}
satisfy the following properties:
\begin{enumerate}
\item[(1)] For any $0\le k\le m-1$,
\beq\label{property Amj}
\sum_{j=0}^{m} A_{m, j} \lambda^{-kj}=0.
\eeq
Furthermore,
\beq\label{property Amj 1}
\sum_{j=0}^{m} A_{m, j} \lambda^{-mj}\ne 0.
\eeq
\item[(2)] For any $ 0\le k<m$,  
\beq\label{property Amj 2}
\sum_{j=0}^{m}\  j A_{m, j} \lambda^{-kj}\neq 0. 
\eeq
\end{enumerate}
\end{lemma}

\begin{proof}
Let $\{e_j(u)\}_{1\le j\le m}$
be the fundamental Lagrange polynomials over the data set $\{\lambda^{-1}, \dots, \lambda^{-m}\}$,
and  notice that $A_{m, j}=e_j(1)$ for $j=1, \dots, m$. Recall that $A_{m, 0}=-1$. 
Applying Lemma~\ref{Lagrange interpolation 0} for functions $g(u)=u^k$ for any $k\ge 0$, 
and evaluating at $u=1$, 
there is $\overline{u}\in [\lambda^{-1}, 1]$ such that
{\allowdisplaybreaks 
\begin{eqnarray*}
\sum_{j=0}^{m} A_{m, j} \lambda^{-kj} 
&=& - \frac{(u^k)^{(m)}|_{u=\overline{u}}}{m! } \prod_{j=1}^m (1-\lambda^{-j}) \\
&=&
\begin{cases}
0, \ & \ \mathrm{if} \ 1\le k\le m-1, \\
\prod\limits_{j=1}^m (1-\lambda^{-j})  \ne 0, \ & \ \mathrm{if}\ k=m. 
\end{cases}
\end{eqnarray*}
}Therefore, \eqref{property Amj} and \eqref{property Amj 1} hold.

Similarly, applying Lemma~\ref{Lagrange interpolation 0} for the set of functions $g(u)=u^k\log u$ for any $k\ge 0$, and evaluating at $u=1$, 
there is $\overline{u}\in  [\lambda^{-1}, 1]$ such that
{\allowdisplaybreaks 
\begin{eqnarray*}
\sum_{j=0}^{m} A_{m, j}\lambda^{-kj}\log \lambda^{-j}
&= &
- \frac{(u^k\log u)^{(m)}|_{u=\overline{u}}}{m! } \prod_{j=1}^m (1-\lambda^{-j}) \\
&=& 
-\frac{(-1)^{m-k-1} k! (m-k-1)! \overline{u}^{k-m}}{m!}\prod_{j=1}^m (1-\lambda^{-j})\ne 0.
\end{eqnarray*}
}Dividing both sides by $-\log \lambda$, we get \eqref{property Amj 2}.
\end{proof}

\subsection{Proof of Proposition~\ref{lem van der}}\label{sec: proof lem van der}

Recall that in the assumptions of Proposition~\ref{lem van der},
we are dealing with a normalized family
$\left\{\Omega_\mu\right\}_{|\mu|\le 1}$ 
of Bunimovich squash-type stadia in $\cM_{ss}^\omega(\chi)$,
which satisfies that 
$\sum_{\gamma^*} G=\sum_{\gamma_n} G=0$ for any $n\ge 1$.
We have the following lemma.
 
\begin{lemma}\label{lem: n'}
$\bn_1(0)=\bn_2(0)=\bn_1'(0)=\bn_2'(0)=0.$
\end{lemma}

\begin{proof}
We already get that $\bn_1(0)=\bn_1'(0)=0$ by Lemma~\ref{lem: arclength}.
The assumption that $\sum_{\gamma^*} G=0$ yields that $\bn_1(0)+\bn_2(0)=0$,
and thus $\bn_2(0)=0$. It also implies that the length function
$\Delta(\mu; \overline{12})$ given by \eqref{def ell12} is constant,
which means that $B(\mu)=B$
for any $\mu\in [-1, 1]$. Then applying the same arguments in the proof
of Lemma~\ref{lem: arclength}, we get $\bn_2'( 0)=0$.
\end{proof}

A by-product of the proof of Lemma~\ref{lem: n'} is that 
$B(\mu)=B$ for any $\mu\in [-1, 1]$. Combining with Assumption (IV) with homothety ratio $\chi$,
we have the following relation between the derivatives of $\bn_1$ and $\bn_2$ at zero. 

\begin{lemma}
\label{lem: n1n2}
$\bn_2^{(d)}(0)=\chi^{1-d} \bn_1^{(d)}(0)$ for any $d\ge 0$.
\end{lemma} 

\begin{proof}
In Subsection~\ref{sec: arclength p}, 
we introduce the parametrizations $s\mapsto \Phi_1(\mu, s)$ on $\Gamma_1(\mu)$
and $t\mapsto \Phi_2(\mu, t)$ on $\Gamma_2(\mu)$.
Since now $B(\mu)=B$ for any $\mu\in [-1, 1]$, by \eqref{eq: homo}, we have 
\beqn
\Phi_2(\mu, t)=\cR\left[\chi \Phi_1\left(\mu,  t/\chi \right)\right]
\ \ \text{for}\ t \ \text{close to} \ 0,
\eeqn
where $\cR$ is the counter-clockwise rotation with center at the middle point of $\overline{AB}$ by 180 degree. 
By the definition of the deformation function given in \eqref{def deformation function},
for any $t$ close to $0$, we have 
\beqn
\bn_2(\mu, t)=\langle \p_\mu\Phi_2(\mu, t), \ N_2(\mu, t) \rangle
=\langle \cR\left[\chi \p_\mu \Phi_1\left(\mu,  t/\chi \right)\right],
\cR N_1(\mu, t/\chi)  \rangle
=\chi \bn_1\left(\mu, t/\chi \right),
\eeqn
and hence 
$\bn_2^{(d)}(0)=\chi^{1-d} \bn_1^{(d)}(0)$
by taking the $d$-th order derivative at $t=0$. 
\end{proof}

\bigskip

We are now ready to prove Proposition~\ref{lem van der} by induction.
Note that
the base of the induction has already been proven in Lemma~\ref{lem: n'}, that is,
\beqn
\bn_1(0)=\bn_2(0)=\bn_1'(0)=\bn_2'(0)=0.
\eeqn
 
Suppose now $d\ge 2$ is an integer such that
\beqn
\bn_1^{(k)}(0)=0\ \text{ and } \ \bn_2^{(k)}(0)=0, \
\ \text{for any} \ k=0, 1, \dots, d-1.
\eeqn
We shall use the assumption that $\sum_{\gamma_n} G=0$ 
to show that $\bn_1^{(d)}(0)=0$ and $\bn_2^{(d)}(0)=0$. 

\medskip

Recall that the maximal period two orbit is denoted as $\gamma^*=\overline{xy}$.
Also, $x=(0, 0)$ in the $(s, \varphi)$-coordinate and $y=(0, 0)$ in the $(t, \psi)$-coordinate.
For $z=(s, \varphi)$ near $x=(0, 0)$, we write the Taylor expansion of $\bn_1(s)$
up to the $d$-th order, as well as that of $\cos\varphi$ up to the 1st order as
\beq\label{Taylor exp 1}
\bn_1(s) = \frac{1}{d!}\, \bn_1^{(d)}(0) \, s^d + \cO\left( s^{d+1} \right), \ \
\cos\varphi=1 +\cO(\varphi^2).
\eeq
Similarly, for $z=(t, \psi)$ near $y=(0, 0)$,
\beq\label{Taylor exp 2}
\bn_2(t) = \frac{1}{d!} \, \bn_2^{(d)}(0) \, t^d +  \cO\left( t^{d+1} \right), \ \
\cos\psi=1 + \cO(\psi^2).
\eeq
Given an fixed $j\ge 0$,
by Lemma~\ref{lem: finer est}, we have that for any large $\ell$ and any $i=1, \dots, j+1$,
{\allowdisplaybreaks
\begin{align*}
x_{2\ell+j}(\ell+i)
&=\lambda^{-\ell-i} (C_s, C_\varphi) + \lambda^{-\ell+i-j} (C_{s, \ell+i}, C_{\varphi, \ell+i})   \\
&+\cO\left(\max\left\{\lambda^{-1.5(\ell+i)}, \lambda^{1.25(-\ell+i-j)}\right\} \right) \\
&=\lambda^{-\ell} \left(C_s\left(\lambda^{-i} + \lambda^{i-j-2} \right),
C_\varphi(\lambda^{-i} - \lambda^{i-j-2}) \right) +\cO(\lambda^{-1.25\ell}).
\end{align*}
}Note that the first term is of leading order $\lambda^{-\ell}$ (as $\ell\to \infty$).
Then by \eqref{Taylor exp 1},
{\allowdisplaybreaks
\beq\label{def Gx}
\begin{split}
G(x_{2\ell+j}(\ell+i))
&=\bn_1(s_{2\ell+j}(\ell+i))\cos\left(\varphi_{2\ell+j}(\ell+i)\right) \\
&=\frac{1}{d!} \bn_1^{(d)}(0)  \lambda^{-d\ell}  C_s^d \left(\lambda^{-i} + \lambda^{i-j-2} \right)^d
+\cO\left(\lambda^{-\ell(d+0.25)} \right)\end{split}
\eeq
}Similarly, for any $j\geq 1$, $i=1, \dots, j$,
{\allowdisplaybreaks
\beq\label{def Gy}
G(y_{2\ell+j}(\ell+i))=
\frac{1}{d!} \bn_2^{(d)}(0) \lambda^{-d\ell} C_t^d \left(\lambda^{-i} + \lambda^{i-j-1}  \right)^d
+\cO\left(\lambda^{-\ell(d+0.25)} \right).
\eeq
}Applying  Lemma~\ref{lem: cancel}
for $m=d+1$ and sufficiently large $\ell\ge 1$,
we obtain
\beq\label{compare d even}
\sum_{j=0}^{d+1} A_{d+1, j} w_{d+1}(\lambda^{-j}) S_{2\ell+j}(\ell) =\cO(\lambda^{-\ell(d+1)}),
\eeq
where $A_{d+1, j}$ are given by \eqref{def Adj}
and $w_{d+1}(\cdot )$ is given by \eqref{defnw}.
Under the assumption that $\sum_{\gamma_n} G=0$, we have an alternative
formula for $S_{2\ell+j}(\ell)$ given by \eqref{def S_n 1}, that is,
\beqn
S_{2\ell+j}(\ell)
= \sum_{i=1}^{j+1} G(x_{2\ell+j}(\ell+i))
+ \sum_{i=1}^{j} G(y_{2\ell+j}(\ell+i)).
\eeqn
By \eqref{def Gx} and \eqref{def Gy}, we rewrite the LHS of \eqref{compare d even} as
\beqn
\sum_{j=0}^{d+1} A_{d+1, j} S_{2\ell+j}(\ell)
=\frac{\lambda^{-d\ell}}{d!}  \left[ \cA_1 C_s^d \bn_1^{(d)}(0) + 
\cA_2 C_t^d \bn_2^{(d)}(0) \right] + \cO\left(\lambda^{-\ell(d+0.25)} \right),
\eeqn
where the coefficients $\cA_\sigma=\cA_\sigma(d, \lambda)$, $\sigma=1, 2$, 
are given by
\beq\label{def A sigma}
\cA_\sigma=\sum_{j=0}^{d+1} A_{d+1, j} w_{d+1}(\lambda^{-j}) \sum_{i=1}^{j+2-\sigma} \left(\lambda^{-i} + \lambda^{i-j-3+\sigma} \right)^d.
\eeq
Therefore, multiplying both sides of \eqref{compare d even} by $\lambda^{d\ell}$ 
and letting $\ell\to \infty$, we get
\beq\label{compare d even 1}
 \cA_1 C_s^d \bn_1^{(d)}(0) + \cA_2 C_t^d \bn_2^{(d)}(0) = 0.
\eeq

\medskip

The relation between $\cA_1$ and $\cA_2$ are given by the following lemma.

\begin{lemma}
\label{lem: A1A2}
The coefficients $\cA_1$ and $\cA_2$ in \eqref{compare d even 1} satisfy the following relation:
\beqn
\cA_2=\lambda^{d/2}\cA_1\ne 0, \ \text{when} \ d \ \text{is even}; 
\ \ \
\cA_2=\lambda^{d}\cA_1\ne 0, \ \text{when} \ d \ \text{is odd}.
\eeqn
\end{lemma}

\begin{proof}
When $d$ is even, we note that $w_{d+1}\equiv 1$. By
\eqref{property Amj}, \eqref{property Amj 2} and 
\eqref{def A sigma}, we get
{\allowdisplaybreaks
\begin{align*}
\cA_\sigma &= \sum_{j=0}^{d+1} A_{d+1, j}  \sum_{i=1}^{j+2-\sigma}
\sum_{\nu=0}^d {d \choose \nu} \lambda^{-i(d-\nu)+\nu(i-j-3+\sigma)} \\
&=\sum_{\nu=0}^d {d \choose \nu} \sum_{j=0}^{d+1} A_{d+1, j}
\sum_{i=1}^{j+2-\sigma} \lambda^{-i(d-2\nu)-\nu(j+3-\sigma)} \\
&=\sum_{\nu=0}^d {d \choose \nu} \sum_{j=0}^{d+1} A_{d+1, j} \cdot
\begin{cases}
\dfrac{\lambda^{-(d-2\nu)-\nu(j+3-\sigma)} - \lambda^{-(d-\nu)(j+3-\sigma)}}{1-\lambda^{-(d-2\nu)}},
& \ \mathrm{if} \ \nu\ne d/2, \\ 
\\
(j+2-\sigma) \lambda^{-\nu(j+3-\sigma)},  
& \ \mathrm{if} \ \nu= d/2, 
\end{cases}
\\
&=\sum_{\nu=0}^d {d \choose \nu} \cdot
\begin{cases}
0, & \ \mathrm{if} \ \nu\ne d/2, \\ 
\\
\sum_{j=0}^{d+1} j A_{d+1, j}  \lambda^{-\nu(j+3-\sigma)},
& \ \mathrm{if} \ \nu= d/2, 
\end{cases} 
\\
&=\lambda^{-d(3-\sigma)/2} {d \choose d/2} \sum_{j=0}^{d+1} j A_{d+1, j} \lambda^{-dj/2} \ne 0,
\end{align*}
where the last term came from the only non-trivial term with $\nu=d/2$.
}Therefore, $\cA_2=\lambda^{d/2}\cA_1\ne 0$.

When $d$ is odd, we note that $w_{d+1}(u)=u$ and thus
$w_{d+1}(\lambda^{-j})=\lambda^{-j}$.
By \eqref{property Amj}, \eqref{property Amj 1} and 
\eqref{def A sigma}, 
we have 
{\allowdisplaybreaks
\begin{eqnarray*}
\cA_\sigma &=& \sum_{j=0}^{d+1} A_{d+1, j} \lambda^{-j} \sum_{i=1}^{j+2-\sigma}
\sum_{\nu=0}^d {d \choose \nu} \lambda^{-i(d-\nu)+\nu(i-j-3+\sigma)} \\
&=&\sum_{\nu=0}^d {d \choose \nu} \sum_{j=0}^{d+1} A_{d+1, j}  \lambda^{-j}
\sum_{i=1}^{j+2-\sigma} \lambda^{-i(d-2\nu)-\nu(j+3-\sigma)} \\
&=&\sum_{\nu=0}^d {d \choose \nu} \sum_{j=0}^{d+1} A_{d+1, j} \lambda^{-j}  \cdot
\dfrac{\lambda^{-(d-2\nu)-\nu(j+3-\sigma)} - \lambda^{-(d-\nu)(j+3-\sigma)}}{1-\lambda^{-(d-2\nu)}} \\
&=&\sum_{\nu=0}^d {d \choose \nu} \dfrac{\cA_\sigma^1(\nu) - \cA_\sigma^2(\nu)} {1-\lambda^{-(d-2\nu)}},
\end{eqnarray*}
}where
{\allowdisplaybreaks
\begin{eqnarray*}
\cA_\sigma^1(\nu)
&=& \lambda^{-(d-2\nu)-\nu(3-\sigma)} \sum_{j=0}^{d+1} A_{d+1, j}  \lambda^{-(\nu+1) j}, \\
&=& \lambda^{-(d-2\nu)-\nu(3-\sigma)} \cdot 
\begin{cases}
0, \ & \ \mathrm{if} \ 0\le \nu < d, \\
\\
\sum\limits_{j=0}^{d+1} A_{d+1, j}  \lambda^{-(d+1) j}, \ & \ \mathrm{if} \  \nu = d,
\end{cases} 
\end{eqnarray*}
}and 
{\allowdisplaybreaks
\begin{eqnarray*}
\cA_\sigma^2(\nu) 
&=& \lambda^{-(d-\nu)(3-\sigma)} \sum_{j=0}^{d+1} A_{d+1, j}  \lambda^{-(d+1-\nu) j} \\
&=& \lambda^{-(d-\nu)(3-\sigma)} \cdot 
\begin{cases}
0, \ & \ \mathrm{if} \ 1\le \nu \le d, \\
\\
\sum\limits_{j=0}^{d+1} A_{d+1, j}  \lambda^{-(d+1) j}, \ & \ \mathrm{if} \  \nu = 0.
\end{cases} 
\end{eqnarray*}
}Therefore, 
{\allowdisplaybreaks
\begin{eqnarray*}
\cA_\sigma =
 \dfrac{\cA^1_\sigma(d)}{1-\lambda^d} - \dfrac{\cA^2_\sigma(0)}{1-\lambda^{-d}} 
&=& \left( \dfrac{ \lambda^{d(\sigma-2)}}{ 1-\lambda^d} - \dfrac{ \lambda^{d(\sigma-3)}}{1-\lambda^{-d}} \right) \sum\limits_{j=0}^{d+1} A_{d+1, j}  \lambda^{-(d+1) j} \\
&=& \dfrac{2\lambda^{d(\sigma-2)} }{1-\lambda^d} \sum\limits_{j=0}^{d+1} A_{d+1, j}  \lambda^{-(d+1) j},
\end{eqnarray*}
}which implies that 
$\cA_2=\lambda^{d}\cA_1\neq 0$.
\end{proof}

\medskip

By \eqref{compare d even 1} and Lemma~\ref{lem: A1A2}, 
we get that for any $d\ge 2$,  
\beq\label{compare d even 3}
 C_s^d\, \bn_1^{(d)}(0) + \lambda^{d\beta(d)} \,C_t^d \, \bn_2^{(d)}(0) = 0,
\eeq
where we set $\beta(d)=\frac12$ when $d$ is even and 
$\beta(d)=1$ when $d$ is odd. 
By Lemma~\ref{lem: n1n2} which asserts that
$\bn_2^{(d)}(0)=\chi^{1-d} \bn_1^{(d)}(0)$, 
we further get 
\beq\label{compare d even 2}
C_s^d \left[ 1+ \lambda^{d\beta(d)} \left(C_t/C_s\right)^d \chi^{1-d} \right]\cdot \bn_1^{(d)}(0) =0.
\eeq
Recall that $C_s$ and $C_t$ are computed in \eqref{C relation} of
Lemma \ref{lem: finer est}, then we have $C_s\ne 0$ and 
$
C_t/C_s=-\frac{2a_1}{1+\lambda}>0
$
since $a_1<-1$ and $\lambda>1$.
Hence 
\beqn
C_s^d \left[ 1+ \lambda^{d\beta(d)} \left(C_t/C_s\right)^d \chi^{1-d} \right]\ne 0,
\eeqn
which implies that $\bn_1^{(d)}(0) =0$ and thus $\bn_2^{(d)}(0)=\chi^{1-d} \bn_1^{(d)}(0)=0$ as well.

\bigskip

Now that we have shown that no matter $d$ is even or odd, 
we always have that 
$\bn_1^{(d)}(0)=\bn_2^{(d)}(0)=0$, and so
the proof of Proposition~\ref{lem van der} is complete.
As we are in the setting of the analytic class $\cM^\omega_{ss}(\chi)$,
Proposition~\ref{lem van der}
implies that $\bn_1$ and $\bn_2$ both vanish,
which proves Proposition~\ref{prop: main n vanish} and thus
Squash Rigidity Theorem.

 \bigskip

\section{Proof of the Stadium Rigidity Theorem}\label{sec: proof of thm2}

Let $\{\Omega_\mu\}_{|\mu|\le 1}$ be a $C^1$ family of Bunimovich stadia in $\cM^\omega_{s, b}$,
such that the flat boundaries $\Gamma_3$ and $\Gamma_4$ are two opposite sides of a fixed rectangle. 
We also denote the arcs by $\Gamma_i=\Gamma_i(\mu)$, $i=1, 2$.
As the flat boundaries are a priori fixed, the four gluing points do not depend on the parameter $\mu$, 
and thus we could denote the gluing points by
\beqn
P_{ij}:=\Gamma_i(\mu)\cap \Gamma_j, \ \ i=1, 2, \ j=3, 4.
\eeqn
The absolute curvature of $\Gamma_i=\Gamma_i(\mu)$ at the gluing point $P_{ij}$ 
is denoted by $\cK_{ij}=\cK_{ij}(\mu)$.
Since $\Omega_\mu\in \cM^\omega_{s, b}$, in particular, 
$\Omega_\mu$ satisfies Assumption (II), i.e., $\p \Omega_\mu$ is not $C^2$ smooth 
at these gluing points, then we must have $\cK_{ij}>0$.

Note that $\overline{P_{14} P_{13} P_{23} P_{24}}$ is a rectangle, and we denote
\beq\label{quotient rect}
Q:=\frac{|\overline{P_{13} P_{23}}|}{|\overline{P_{14} P_{13}}|}.
\eeq

The Stadium Rigidity Theorem 
concerns the dynamical spectral rigidity in the class $\cM^\omega_{s, b}$.  
With careful arrangement, we choose
a parametrization $\Phi: [-1, 1]\times J\to \IR^2$ for
the family of stadia $\left\{\Omega_\mu\right\}_{|\mu|\le 1}$
along the counter-clockwise direction, where
\beqn
J=[0, |\p \Omega_0|]:=J_1\cup J_3 \cup J_2 \cup J_4,
\eeqn
such that
\begin{itemize}
\item[(1)] For any $\mu\in [-1, 1]$ and $i=1, 2, 3, 4$, we have $\Phi(\mu, J_i)=\Gamma_i(\mu)$. In particular,
$\Gamma_j(\mu)=\Gamma_j$ for $j=3, 4$. Also, $\Phi(\mu, 0)=P_{14}$ and $\Phi(\mu, |J_1|+|J_3|)=P_{23}$;
\item[(2)] The mapping $\mu\mapsto \Phi(\mu, \cdot)$ is $C^1$ smooth;
\item[(3)]
Set $\Phi_i=\Phi|_{J_i}$ for $i=1, 2$.
For any fixed $\mu\in [-1, 1]$, the map $r\mapsto \Phi_i(\mu, r)$ is analytic. 
\item[(4)]
As the flat boundaries $\Gamma_3$ and $\Gamma_4$ are fixed,
we could also assume that
\beq\label{flat fixed}
\Phi(\mu, r)=\Phi(0, r), \ \text{for any} \ \mu\in [-1, 1] \ \text{and} \ r\in J_3\cup J_4.
\eeq
\end{itemize}

We then define the deformation function
$\bn: [-1, 1]\times J\to \IR$ as in \eqref{def deformation function}.
It is obvious that $\bn\equiv 0$ on $[-1, 1]\times \left(J_3\cup J_4 \right)$.
Moreover, for any $\mu\in [-1, 1]$, the function $r\to \bn(\mu, r)$ is analytic on $J_1\cup J_2$.

For brevity, when the parameter $\mu$ is clear,
we shall just write $\bn(r)$ instead of $\bn(\mu, r)$.
From Section~\ref{sec: functional spectra},
we know that if the family of domains $\left\{\Omega_\mu\right\}_{|\mu|\le 1}$
is dynamically isospectral, then
\beq\label{def G sum zero}
\sum_{z\in \gamma} G(z)= \sum_{(r, \varphi)\in \gamma} \bn(r) \cos\varphi = 0,
\eeq
for any periodic billiard orbit $\gamma$.
Since $\bn\equiv 0$ on $J_3\cup J_4$, the equation \eqref{def G sum zero}
holds for any periodic trajectories of
the induced billiard map on $\Gamma_1\cup \Gamma_2$.

Similar to what we have done in Section~\ref{sec: arclength p}, 
for any fixed $\mu_0\in [-1, 1]$, we may introduce a parametrization
$s\mapsto \Phi_1(\mu, s)$ with $s\in J_1$ on $\Gamma_1(\mu)$
and $t\mapsto \Phi_2(\mu, t)$ with $t\in J_2$ on $\Gamma_2(\mu)$,
such that $\Phi_1(\mu_0, s)$ and $\Phi_2(\mu_0, t)$ are of arclength parametrization
on $\Gamma_1(\mu_0)$ and $\Gamma_2(\mu_0)$ respectively. 
Different from Section~\ref{sec: arclength p}, 
we intend to study the local behavior near the gluing points $P_{14}$ and $P_{23}$,
therefore, 
we further assume that $\Phi_1(\mu, 0)=P_{14}$ and $\Phi_2(\mu, 0)=P_{23}$.

As mentioned earlier in Remark~\ref{rem: bn},
the vanishing property of $\bn$ would not change under those reparametrizations.
In this way, we rewrite $\bn$ on $\Gamma_1$ and $\Gamma_2$ by $\bn_1(s)$
and $\bn_2(t)$ respectively, in which we omit the parameter $\mu$. 

\subsection{Induced Periodic Trajectories}

Unfolding a stadium $\Omega\in \cM^\omega_{s, b}$, we obtain a channel of consecutive cells isometric to $\Omega$.
We denote the $k$-th cell by $\Omega^k$,
whose below curve is  $\Gamma_1^k$ with left endpoint $P_{14}^k$,
and above curve is $\Gamma_2^k$ with right endpoint $P_{23}^k$.
Using this unfolding technique,
we first construct periodic billiard trajectories of induced period two.

\begin{lemma}\label{lem: per 2}
For any sufficiently large $n\ge 1$, there exists a period two palindromic trajectory (see Fig.~\ref{fig:unfolding2})
$$
\ogamma_n=\overline{A_nB_n}=\overline{(\os_n, 0) (\ot_n, 0)}
$$
for the induced billiard map on $\Gamma_1\cup \Gamma_2$,
such that $A_n\in \Gamma_1^0$ and $B_n\in \Gamma_2^n$.
Moreover, as $n\to \infty$,
\beq\label{per 2 est}
\os_n=\frac{Q}{n\cK_{14}} + \cO\left(\frac{1}{n^2}\right), \ \text{and} \ \
\ot_n=\frac{Q}{n\cK_{23}} + \cO\left(\frac{1}{n^2}\right),
\eeq
where $Q$ is the quotient given by \eqref{quotient rect},
$\cK_{14}$ is the absolute curvature of $\Gamma_1$ at $P_{14}$,
and
$\cK_{23}$ is the absolute curvature of $\Gamma_2$ at $P_{23}$.
\end{lemma}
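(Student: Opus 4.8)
The plan is to run the unfolding–plus–concavity argument of Lemmas~\ref{lem: gamma uniq} and~\ref{lem: gamma_n uniq}, but in the channel obtained by unfolding $\Omega$ across its two parallel flat walls, and then to extract~\eqref{per 2 est} by a one‑step perturbation of the perpendicularity equations. Concretely: unfolding across $\Gamma_3,\Gamma_4$ produces the channel $\bigcup_{k\ge 0}\Omega^k$, and in it a period‑two orbit of the induced map on $\Gamma_1\cup\Gamma_2$ whose two vertices lie on $\Gamma_1$ and on $\Gamma_2$ and are $n$ cells apart is exactly a straight segment from a point $A\in\Gamma_1^0$ to a point $B\in\Gamma_2^n$ meeting $\Gamma_1^0$ at $A$ and $\Gamma_2^n$ at $B$ orthogonally; such an orbit is automatically palindromic, since a billiard orbit hitting the wall perpendicularly retraces itself. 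Let $\Phi_1$ be the arclength parametrization of $\Gamma_1^0$ with $s=0\leftrightarrow P_{14}$, and $\widetilde\Phi_2$ that of $\Gamma_2^n$ with $t=0$ at the gluing point near which the segment meets $\Gamma_2^n$ (this is $P_{23}$ for one parity of $n$ and $P_{24}$ for the other; both cases are identical). Put $\widetilde\tau_n(s,t)=|\Phi_1(s)-\widetilde\Phi_2(t)|$, the length of the connecting segment. Since $\widetilde\tau_n$ is a genuine free path between two curves, \eqref{tau dr} and \eqref{tau dr 2} apply verbatim: $\partial_s\widetilde\tau_n=-\sin\varphi$, $\partial_t\widetilde\tau_n=\sin\psi$, where $\varphi,\psi$ are the angles the segment makes with the inner normals at $A$ and $B$, and the Hessian of $\widetilde\tau_n$ is the matrix of \eqref{tau dr 2} with $\tau\mapsto\widetilde\tau_n$ and $\cK,\cK_1$ the signed curvatures of $\Gamma_1^0,\Gamma_2^n$. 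Thus a critical point of $\widetilde\tau_n$ is precisely a common‑normal segment, i.e.\ an orbit of the required type.

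For existence and uniqueness I would fix a small $\delta_0>0$ and work on $D=[0,\delta_0]^2$. Because $\widetilde\tau_n\ge(n-1)h\to\infty$ while the signed curvatures of $\Gamma_1^0$ and $\Gamma_2^n$ near the gluing points are $\le-\tfrac12\cK_{14}<0$ and $\le-\tfrac12\cK_{23}<0$ (for $\delta_0$ small), and all angles that occur are $O(\delta_0)+O(1/n)$, the Hessian \eqref{tau dr 2} has negative $(1,1)$‑entry and positive determinant once $n$ is large; hence $\widetilde\tau_n$ is strictly concave on $D$, with at most one critical point there. It attains its maximum on $\overline D$, and the maximum is interior: it cannot sit on the edges through $P_{14},P_{23}$ because $\Gamma_1$ (resp.\ $\Gamma_2$) is $C^1$‑tangent to $\Gamma_4$ (resp.\ $\Gamma_3$) there, so moving inward along the arc increases $\widetilde\tau_n$ to first order ($\Delta\widetilde\tau_n=-\sin\varphi\,\Delta s+O(\Delta s^2)>0$, exactly as in the proof of Lemma~\ref{lem: gamma uniq}); and it cannot sit on $s=\delta_0$ or $t=\delta_0$ because there the inner normal to the arc is tilted from the axis of the channel by the fixed amount $\approx\cK_{14}\delta_0$ (resp.\ $\approx\cK_{23}\delta_0$), whereas the connecting segment is tilted by only $O(1/n)$, which forces $\partial_s\widetilde\tau_n<0$ (resp.\ $\partial_t\widetilde\tau_n<0$) for $n$ large. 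So $\widetilde\tau_n$ has a unique critical point $(\os_n,\ot_n)\in(0,\delta_0)^2$; since $\os_n,\ot_n=O(1/n)$, the corresponding segment stays clear of the intermediate arcs $\Gamma_1^k,\Gamma_2^k$ $(1\le k\le n-1)$ and hence is a genuine billiard orbit $\ogamma_n=\overline{A_nB_n}$.

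For the asymptotics, place $P_{14}$ at the origin with $\Gamma_4$ along an axis of the channel. From the arclength parametrization, the inner normal to $\Gamma_1^0$ at $\Phi_1(\os_n)$ makes angle $\cK_{14}\os_n+O(\os_n^3)$ with the longitudinal direction, and the inner normal to $\Gamma_2^n$ at $\widetilde\Phi_2(\ot_n)$ makes angle $\cK_{23}\ot_n+O(\ot_n^3)$ with it. The segment $\overline{\Phi_1(\os_n)\widetilde\Phi_2(\ot_n)}$ has longitudinal extent $n\,|\overline{P_{14}P_{13}}|+O(1)$ and transverse extent $|\overline{P_{13}P_{23}}|+O(\os_n+\ot_n)$, hence makes angle $\tfrac Qn+O(1/n^2)$ with the longitudinal direction. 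Equating this angle with each normal's angle — which is exactly what $\varphi(\os_n,\ot_n)=\psi(\os_n,\ot_n)=0$ say — gives $\cK_{14}\os_n=\tfrac Qn+O(1/n^2)$ and $\cK_{23}\ot_n=\tfrac Qn+O(1/n^2)$, i.e.\ \eqref{per 2 est}; the $O(1/n^2)$ remainder is legitimate since $\Gamma_1,\Gamma_2$ are $C^3$. Equivalently, rescaling $s=\sigma/n,\ t=\rho/n$ turns the perpendicularity system into one with nondegenerate $n\to\infty$ limit $(\cK_{14}\sigma-Q,\ \cK_{23}\rho-Q)=0$, and the implicit function theorem delivers the solution together with the error term.

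The main obstacle I anticipate is the bookkeeping in the existence step: choosing the window $D$ so that $\widetilde\tau_n$ has a genuine interior maximum (the gluing‑point edges are controlled by the $C^1$‑tangency, but the remaining edges require the competition between $\widetilde\tau_n\sim nh$ and the fixed curvature of the arcs), and then verifying that the resulting common‑normal segment actually stays inside the unfolded channel so that it is a billiard orbit of essential period two. Once this is in place, the asymptotic estimate \eqref{per 2 est} is a routine Taylor expansion.
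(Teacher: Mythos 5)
Your proof follows essentially the same route as the paper's: characterize $\ogamma_n$ as the maximizer of the distance between $\Gamma_1^0$ and $\Gamma_2^n$ in the unfolded channel (your Hessian-and-window analysis merely makes explicit the strict concavity the paper invokes), and then derive \eqref{per 2 est} by matching the segment's inclination $\tfrac{Q}{n}+\cO(1/n^2)$ with the normal angles $\cK_{14}\os_n$ and $\cK_{23}\ot_n$ via osculating circles at the gluing points. The only quibble is that for a $C^3$ arc the normal's angle is $\cK_{14}\os_n+\cO(\os_n^2)$ rather than $\cO(\os_n^3)$, which is harmless here since $\os_n=\cO(1/n)$.
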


\begin{figure}[h]
\begin{center}
\includegraphics[width=.8\textwidth]{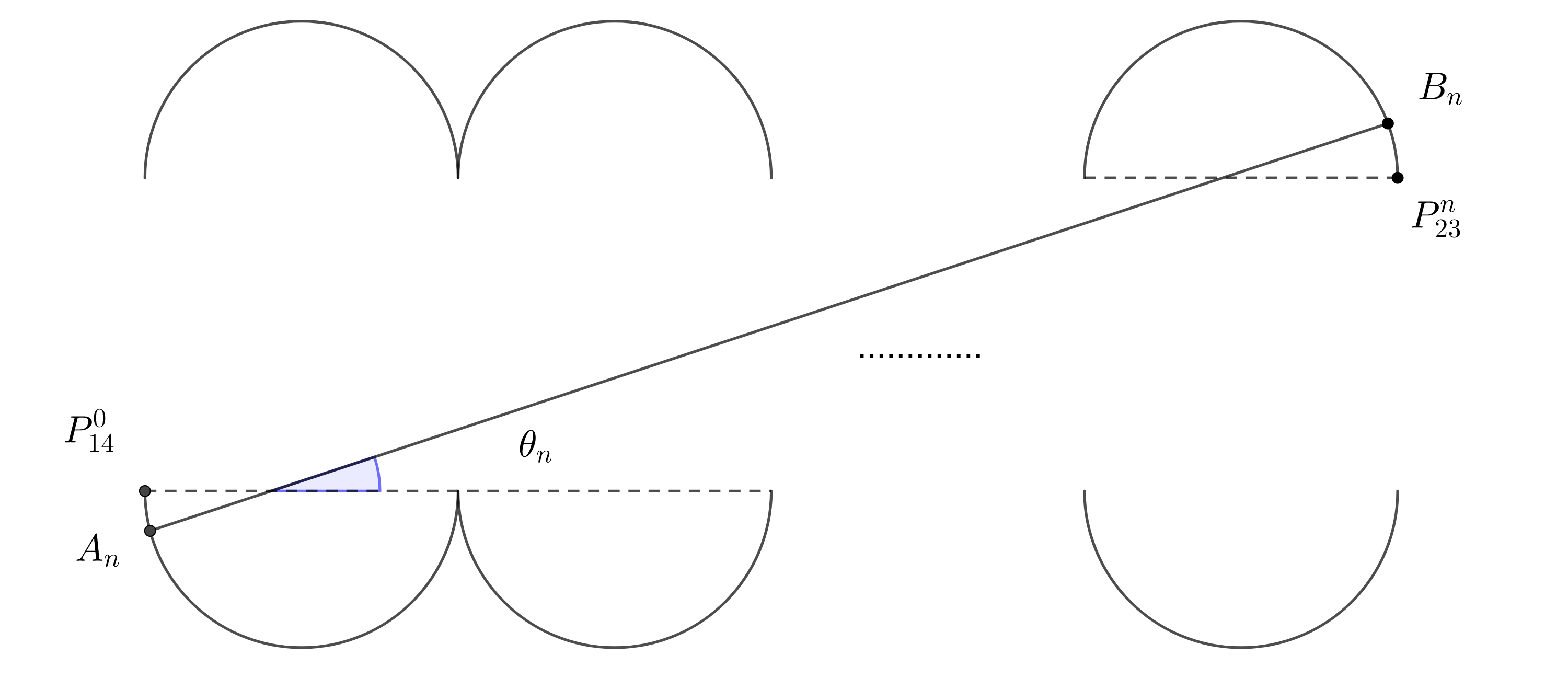}
\end{center}
\caption{The period two orbit $\ogamma_n$ for the induced billiard map}
\label{fig:unfolding2}
\end{figure}

\begin{proof}
By strict concavity of $\Gamma_1^0$ and $\Gamma_2^n$,
there is a unique pair of points $A_n\in \Gamma_1^0$ and $B_n\in \Gamma_2^n$,
which achieve the maximum of
\beqn
\left\{\left\|\overline{A B} \right\|: \ A\in  \Gamma_1^0, \ B\in \Gamma_2^n \right\}.
\eeqn
As a consequence, $\overline{A_n B_n}$ is perpendicular to $\Gamma_1^0$ at $A_n$,
as well as to $\Gamma_2^n$ at $B_n$.
Denote the coordinate of $A_n$ in $\Gamma_1^0$ by $\os_n$, and
the coordinate of $B_n$ in $\Gamma_2^n$ by $\ot_n$, then
$\ogamma_n=\overline{A_nB_n}=\overline{(\os_n, 0) (\ot_n, 0)}$ is a period two orbit
for the induced billiard map on $\Gamma_1\cup \Gamma_2$.

Recall that $Q$ is the quotient given by \eqref{quotient rect}.
Let $\theta_n$ be the angle formed by the horizontal axis and $\overline{A_n B_n}$,
then as $n\to \infty$,
\beq\label{per 2 est 1}
\theta_n= \tan\theta_n + \cO\left(\frac{1}{n^2}\right) = \frac{Q}{n} + \cO\left(\frac{1}{n^2}\right).
\eeq
On the other hand, as $n\to \infty$ and thus $\theta_n\to 0$,
the point $A_n$ is more and more close to $P_{14}^0$, and
the point $B_n$ is more and more close to $P_{23}^n$.
We approximate $\Gamma_1^0$ by the osculating circle at $P_{14}^0$ with absolute curvature $\cK_{14}$,
and approximate $\Gamma_2^n$ by the osculating circle at $P_{23}^n$ with absolute curvature $\cK_{23}$.
Then there exist constants $c_1, c_2\in \IR$
such that as $n\to \infty$,
\beq\label{per 2 est 2}
\theta_n= \cK_{14} \os_n+c_1 \os_n^2
=\cK_{23} \ot_n +c_2 \ot_n^2.
\eeq
Then the estimates in \eqref{per 2 est} directly follows from \eqref{per 2 est 1} and \eqref{per 2 est 2}.
\end{proof}

We also consider periodic billiard trajectories of induced period 4.

\begin{lemma}\label{lem: per 4}
For any fixed $\rho>1$ and
any sufficiently large number $n\ge 1$, there exists a period four palindromic trajectory (see Fig.~\ref{fig:unfolding4})
$$
\ogamma_{n, \rho}=
\ \overline{B_{n, \rho}' A_{n, \rho} B_{n, \rho}'' A_{n, \rho}}=
\ \overline{(\ot_{n, \rho}', 0) (\os_{n, \rho}, \ovarphi_{n, \rho})
(\ot_{n, \rho}'', 0) (\os_{n, \rho}, -\ovarphi_{n, \rho})},
$$
for the induced billiard map on $\Gamma_1\cup \Gamma_2$, such that
$A_{n, \rho}\in \Gamma_1^0$, $B_{n, \rho}'\in \Gamma_2^n$ and $B_{n, \rho}''\in \Gamma_2^{\lfloor n \rho \rfloor}$.
Moreover, as $n\to \infty$,
\beq\label{per 4 est}
\begin{split}
\ot_{n, \rho}'=\frac{Q}{n\cK_{23}} + \cO\left(\frac{1}{n^2}\right), \ \ \  \ \  \ \  \ \ \
& \ \ \ \ot_{n, \rho}''=\frac{Q\rho^{-1}}{n\cK_{23}} + \cO\left(\frac{1}{n^2}\right),   \\
\os_{n, \rho}=\frac{Q(1+\rho^{-1})}{2n\cK_{14}} + \cO\left(\frac{1}{n^2}\right),  \ \ \
& \ \ \ \ovarphi_{n, \rho}=\frac{Q(1-\rho^{-1})}{2n} + \cO\left(\frac{1}{n^2}\right).
\end{split}
\eeq
\end{lemma}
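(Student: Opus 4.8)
Write $N:=\lfloor n\rho\rfloor$, so that $N>n$ once $n$ is large (depending on $\rho$). The argument parallels Lemma~\ref{lem: per 2}: I realize $\ogamma_{n,\rho}$ as the unique critical point of a length functional, and then extract \eqref{per 4 est} from the osculating circles at the three corners $P_{14}^0$, $P_{23}^n$, $P_{23}^{N}$.

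\emph{Step 1: variational characterization.} On the compact set $\Gamma_1^0\times\Gamma_2^n\times\Gamma_2^{N}$ consider
\beqn
L(A,B',B'')=\left\|\overline{B'A}\right\|+\left\|\overline{AB''}\right\|.
\eeqn
By \eqref{tau dr}, a critical point of $L$ is precisely a triple for which $\overline{B'A}\perp\Gamma_2^n$ at $B'$, $\overline{AB''}\perp\Gamma_2^{N}$ at $B''$, and the inward normal to $\Gamma_1^0$ at $A$ bisects $\angle B'AB''$; unfolding, these three conditions say that $B'\to A\to B''\to A\to B'$ closes up into a period-four trajectory of the induced billiard map on $\Gamma_1\cup\Gamma_2$, and the two perpendicular turning points $B'$, $B''$ make it palindromic as in Section~\ref{sec: palin}. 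So it suffices to produce a critical point of $L$ and to check the two maximizing segments avoid the intermediate arcs $\Gamma_1^k$, $\Gamma_2^k$.

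\emph{Step 2: existence, uniqueness, admissibility.} On the whole domain every free path has length $\tau\ge c\,n$ for some $c>0$ (cell $0$ and cell $n$, resp.\ cell $N$, are that far apart), so by \eqref{tau dr 2} the $2\times2$ Hessian of each of $\|\overline{B'A}\|$ and $\|\overline{AB''}\|$ in the arclength parameters of its two endpoints has negative diagonal and positive determinant, exactly as in Lemma~\ref{lem: gamma uniq} — the largeness of $\tau$ here playing the role of the defocusing inequality. Since $B'$ and $B''$ do not interact, a Schur-complement computation using the star structure of the Hessian upgrades this to: the full Hessian of $L$ in the arclength parameters $(s_A,t_{B'},t_{B''})$ is negative definite; hence $L$ is strictly concave on this box and has at most one critical point. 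It attains a maximum by compactness, and as in Lemma~\ref{lem: gamma uniq} the $C^1$-tangency of the arcs to the flat walls forces this maximum to be interior (at a boundary configuration the first-order variation of $L$ in the admissible direction into the offending arc is strictly positive, via $\p\tau/\p r=\mp\sin\varphi$). Thus $L$ has a unique critical point, its global maximum, giving $\ogamma_{n,\rho}$. Finally, for $n$ large the two maximizing segments make angle $\cO(1/n)$ with the channel axis (length $\gtrsim n$, transverse displacement $\cO(1)$), so by strict convexity of the fixed arcs they stay on the correct side of every intermediate arc $\Gamma_1^k$, $\Gamma_2^k$, and $\ogamma_{n,\rho}$ is a genuine induced trajectory.

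\emph{Step 3: asymptotics.} Run the linearization of Lemma~\ref{lem: per 2}. Let $\theta'$ and $\theta''$ be the angles of $\overline{A_{n,\rho}B_{n,\rho}'}$ and $\overline{A_{n,\rho}B_{n,\rho}''}$ with the channel axis. By Step~2, as $n\to\infty$ one has $\os_{n,\rho},\ot'_{n,\rho},\ot''_{n,\rho}\to0$, i.e.\ $A_{n,\rho}\to P_{14}^0$, $B'_{n,\rho}\to P_{23}^n$, $B''_{n,\rho}\to P_{23}^{N}$, so the segments degenerate to the corner-to-corner diagonals, whence, exactly as in \eqref{per 2 est 1} together with \eqref{quotient rect},
\beqn
\theta'=\tfrac{Q}{n}+\cO(n^{-2}),\qquad \theta''=\tfrac{Q}{N}+\cO(n^{-2})=\tfrac{Q\rho^{-1}}{n}+\cO(n^{-2}).
\eeqn
Approximating $\Gamma_1^0$, $\Gamma_2^n$, $\Gamma_2^{N}$ by their osculating circles at $P_{14}^0$, $P_{23}^n$, $P_{23}^{N}$ — whose tangents are perpendicular to the channel axis by $C^1$-tangency to the flat walls — the perpendicularity at $B'$, $B''$ reads, as in \eqref{per 2 est 2}, $\theta'=\cK_{23}\ot'_{n,\rho}+\cO((\ot')^2)$ and $\theta''=\cK_{23}\ot''_{n,\rho}+\cO((\ot'')^2)$, which gives the first-row estimates in \eqref{per 4 est}; the reflection law at $A_{n,\rho}$ says the inward normal there bisects $\angle B'AB''$, hence makes angle $\tfrac12(\theta'+\theta'')$ with the channel axis, while it has rotated by $\cK_{14}\os_{n,\rho}+\cO((\os)^2)$ off the normal at $P_{14}^0$; equating yields $\os_{n,\rho}=\tfrac{Q(1+\rho^{-1})}{2n\cK_{14}}+\cO(n^{-2})$, and $\ovarphi_{n,\rho}$, the angle of the segments with that normal, equals $\tfrac12(\theta'-\theta'')=\tfrac{Q(1-\rho^{-1})}{2n}+\cO(n^{-2})$. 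These are precisely \eqref{per 4 est}.

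\emph{Main obstacle.} As in Lemma~\ref{lem: per 2}, the delicate point is not the (routine) asymptotic bookkeeping but the qualitative claims of Step~2: that the variational maximum lies in the interior of $\Gamma_1^0\times\Gamma_2^n\times\Gamma_2^{N}$ and that the two maximizing segments never strike an intermediate arc. Both hinge on $n$ being large — long free paths and shallow angles — and must be quantified carefully, in particular to control the Schur-complement Hessian bound and the sign of the boundary first-variation in the spirit of Lemma~\ref{lem: gamma uniq}.
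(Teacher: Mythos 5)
Your proposal is correct and follows essentially the same route as the paper: the orbit is produced as the maximizer of $\left\|\overline{AB'}\right\|+\left\|\overline{AB''}\right\|$ over $\Gamma_1^0\times\Gamma_2^n\times\Gamma_2^{\lfloor n\rho\rfloor}$ (the paper simply invokes the argument of Lemma~\ref{lem: gamma_n uniq}, where you spell out the concavity, interiority and admissibility details), and the asymptotics \eqref{per 4 est} are extracted in the identical way from the corner-to-corner angles $\theta'=Q/n+\cO(n^{-2})$, $\theta''=Q\rho^{-1}/n+\cO(n^{-2})$, the bisection condition at $A_{n,\rho}$, and the osculating circles at $P_{14}^0$, $P_{23}^n$, $P_{23}^{\lfloor n\rho\rfloor}$.
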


\begin{figure}[h]
\begin{center}
\includegraphics[width=.8\textwidth]{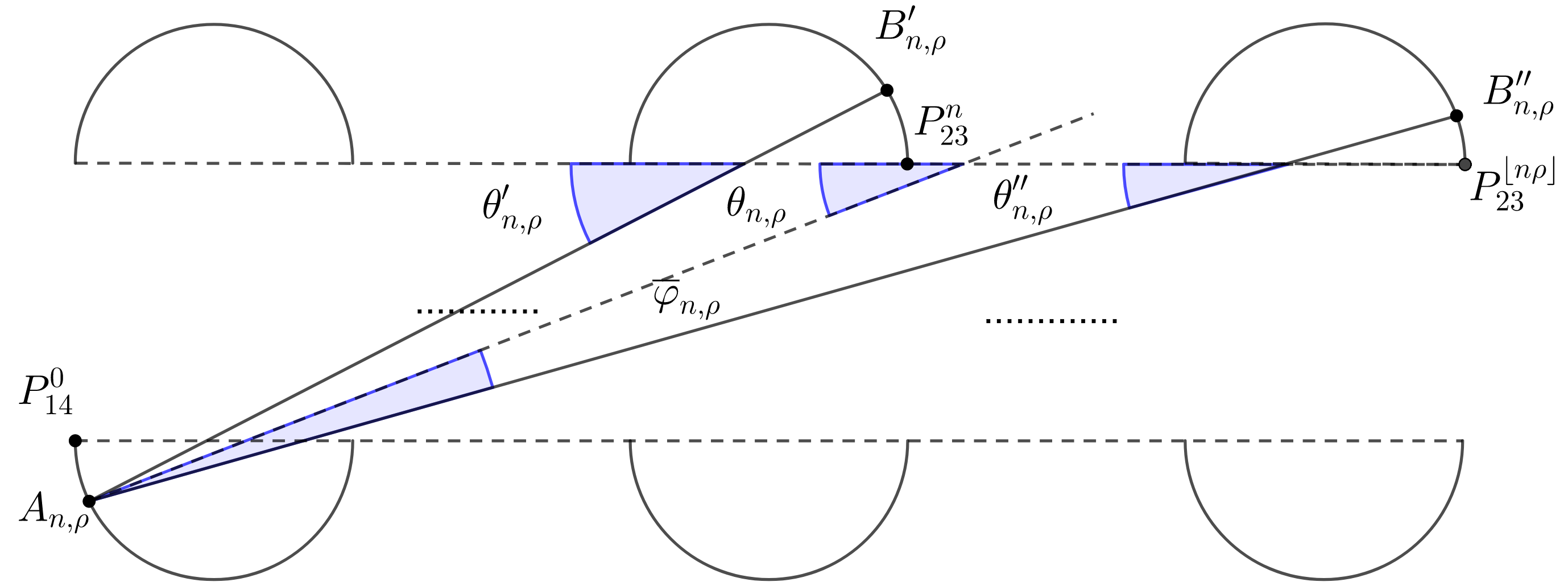}
\end{center}
\caption{The period four palindromic orbit $\ogamma_{n, \rho}$ for the induced billiard map}
\label{fig:unfolding4}
\end{figure}

\begin{proof} Similar to the proof of Lemma~\ref{lem: gamma_n uniq},
there exist $A_{n, \rho}\in \Gamma_1^0$, $B_{n, \rho}'\in \Gamma_2^n$
and $B_{n, \rho}''\in \Gamma_2^{\lfloor n \rho \rfloor}$,
which achieve the maximum of
\beqn
\left\{\left\|\overline{A B'} \right\| + \left\|\overline{A B''} \right\| : \ A\in  \Gamma_1^0, \
B'\in \Gamma_2^n, \ B''\in \Gamma_2^{\lfloor n \rho \rfloor} \right\}.
\eeqn
Thus, $\overline{A_{n, \rho} B_{n, \rho}'}$ is perpendicular to $\Gamma_2^n$ at $B_{n, \rho}'$,
and
$\overline{A_{n, \rho} B_{n, \rho}''}$ is perpendicular to $\Gamma_2^{\lfloor n \rho \rfloor}$ at $B_{n, \rho}''$.
Moreover, the angle $\angle(\overline{A_{n, \rho} B_{n, \rho}'}, \cN)$
is equal to the angle $\angle(\overline{A_{n, \rho} B_{n, \rho}''},, \cN)$,
where $\cN$ is the inner normal direction of $\Gamma_1^0$ at $A_{n, \rho}$.
We denote this angle by $\ovarphi_{n, \rho}$.
We further denote the coordinate of $A_{n, \rho}$ in $\Gamma_1^0$ by $\os_{n, \rho}$,
the coordinate of $B_{n, \rho}'$ in $\Gamma_2^n$ by $\ot_{n, \rho}'$,
and the coordinate of $B_{n, \rho}''$ in $\Gamma_2^{\lfloor n \rho \rfloor}$ by $\ot_{n, \rho}$.
Then it is clear that
$$
\ogamma_{n, \rho}=
\ \overline{B_{n, \rho}' A_{n, \rho} B_{n, \rho}'' A_{n, \rho}}=
\ \overline{(\ot_{n, \rho}', 0) (\os_{n, \rho}, \ovarphi_{n, \rho})
(\ot_{n, \rho}'', 0) (\os_{n, \rho}, -\ovarphi_{n, \rho})},
$$
is a period four orbit for the induced billiard map on $\Gamma_1\cup \Gamma_2$.

Let $\theta_{n, \rho}'$,  $\theta_{n, \rho}''$ and $\theta_{n, \rho}$ be the angles
formed by horizontal axis with $\overline{A_{n, \rho} B_{n, \rho}'}$,
$\overline{A_{n, \rho} B_{n, \rho}'}$ and  $\cN$ respectively.
Then we have
\beqn
\ovarphi_{n, \rho}=\theta_{n, \rho}'-\theta_{n, \rho}=\theta_{n, \rho}-\theta_{n, \rho}''.
\eeqn
As $n\to \infty$,
{\allowdisplaybreaks
\beq\label{per 4 est 1}
\begin{split}
\theta_{n, \rho}' &= \tan\theta_{n, \rho}' +
\cO\left(\frac{1}{n^2}\right) = \frac{Q}{n} + \cO\left(\frac{1}{n^2}\right),
\\
\theta_{n, \rho}'' &= \tan\theta_{n, \rho}'' +
\cO\left(\frac{1}{n^2}\right) = \frac{Q\rho^{-1}}{n} + \cO\left(\frac{1}{n^2}\right),
\end{split}
\eeq
}which implies that
{\allowdisplaybreaks
\beq\label{per 4 est 2}
\begin{split}
\theta_{n, \rho} &= \frac{\theta_{n, \rho}'+\theta_{n, \rho}''}{2}
=\frac{Q(1+\rho^{-1})}{2n}  + \cO\left(\frac{1}{n^2}\right),
\\
\ovarphi_{n, \rho} &=\frac{\theta_{n, \rho}' - \theta_{n, \rho}''}{2}
=\frac{Q(1-\rho^{-1})}{2n}  + \cO\left(\frac{1}{n^2}\right).
\end{split}
\eeq}We again approximate $\Gamma_1^0$ by the osculating circle at $P_{14}^0$,
approximate $\Gamma_2^n$ by the osculating circle at $P_{23}^n$,
and
approximate $\Gamma_2^{\lfloor n \rho \rfloor}$ by the osculating circle
at $P_{23}^{\lfloor n \rho \rfloor}$,
then there exist constants $c_1, c_2\in \IR$ such that
\beq\label{per 4 est 3}
\theta_{n, \rho}= \cK_{14} \os_{n, \rho}+c_1 \os_{n, \rho}^2, \ \
\text{and} \ \
\theta_{n, \rho}^*=\cK_{23} \ot_{n, \rho}^*+ c_2 \left( \ot_{n, \rho}^* \right)^2
\ \text{for}\ *=\prime, \ \prime\prime.
\eeq
Then the estimate \eqref{per 4 est} directly follows from \eqref{per 4 est 1}, \eqref{per 4 est 2} and \eqref{per 4 est 3}.
\end{proof}

\subsection{Flatness of $\bn$ at $P_{14}$ and $P_{23}$}

Recall that $\bn\equiv 0$ on $J_3\cup J_4$.
We introduce arclength parameter $s$ on $\Gamma_1$ such that $P_{14}$ corresponds to $s=0$,
and arclength parameter $t$ on $\Gamma_2$ such that $P_{23}$ corresponds to $t=0$.
Then we rewrite $\bn$ as $\bn_1(s)$ on $\Gamma_1$ and by $\bn_2(t)$ on $\Gamma_2$.
Note that $\bn_1(0)=\bn_2(0)=0$.

Under the assumption that \eqref{def G sum zero} holds for the special orbits $\ogamma_n$
and $\ogamma_{n, \rho}$, we shall prove the flatness of the deformation function $\bn$
at the two gluing points $P_{14}$ and $P_{23}$, that is,

\begin{proposition}\label{prop: vanishing bn stad}
If $\sum_{\ogamma_n} G=\sum_{\ogamma_{n, \rho}} G=0$ for any $n\ge 1$
and $\rho>1$, then
\beq\label{vanishing bn stad}
\bn_1^{(d)}(0)=\bn_2^{(d)}(0)=0, \ \ \text{for any}\ d\ge 1.
\eeq
\end{proposition}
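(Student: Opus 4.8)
The plan is to exploit the two families of special orbits — the essential period two orbits $\ogamma_n$ and the essential period four orbits $\ogamma_{n,\rho}$ — and extract from the isospectral identity $\sum_\gamma G = 0$ enough linear relations among the Taylor coefficients $\bn_1^{(d)}(0)$ and $\bn_2^{(d)}(0)$ to force all of them to vanish, proceeding by induction on $d$. The key point is that, thanks to the arclength normalization and $\bn\equiv 0$ on $J_3\cup J_4$, the identity $\sum_{z\in\gamma} G(z) = \sum_{(r,\varphi)\in\gamma}\bn(r)\cos\varphi = 0$ for the induced billiard map on $\Gamma_1\cup\Gamma_2$ reduces to a relation involving only the collision points on the arcs, and for $\ogamma_n$ and $\ogamma_{n,\rho}$ these collision points are the explicit points $(\os_n,0),(\ot_n,0)$ and $(\os_{n,\rho},\pm\ovarphi_{n,\rho}),(\ot'_{n,\rho},0),(\ot''_{n,\rho},0)$ whose asymptotics are given by \eqref{per 2 est} and \eqref{per 4 est}.

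First I would set up the induction: assume $\bn_1^{(k)}(0)=\bn_2^{(k)}(0)=0$ for all $k< d$ (the base case $d=1$ is part of the claim and can be obtained from the $d=1$ instance of the argument below, or directly as in Lemma~\ref{lem: arclength}). Writing the Taylor expansions $\bn_1(s)=\tfrac{1}{d!}\bn_1^{(d)}(0)\,s^d+\cO(s^{d+1})$ and $\bn_2(t)=\tfrac{1}{d!}\bn_2^{(d)}(0)\,t^d+\cO(t^{d+1})$, and using $\cos\varphi = 1+\cO(\varphi^2)$, the identity $\sum_{\ogamma_n} G = 0$ becomes
\[
\bn_1(\os_n) + \bn_2(\ot_n) = \frac{1}{d!}\Bigl[\bn_1^{(d)}(0)\,\os_n^{\,d} + \bn_2^{(d)}(0)\,\ot_n^{\,d}\Bigr] + \cO\!\left(n^{-(d+1)}\right) = 0.
\]
Substituting $\os_n = \tfrac{Q}{n\cK_{14}}+\cO(n^{-2})$ and $\ot_n=\tfrac{Q}{n\cK_{23}}+\cO(n^{-2})$, multiplying by $n^d$ and letting $n\to\infty$ gives the first linear relation
\[
\cK_{14}^{-d}\,\bn_1^{(d)}(0) + \cK_{23}^{-d}\,\bn_2^{(d)}(0) = 0.
\]
Next I would run the analogous computation for $\ogamma_{n,\rho}$: the identity $\sum_{\ogamma_{n,\rho}} G = 0$ reads $2\bn_1(\os_{n,\rho})\cos\ovarphi_{n,\rho} + \bn_2(\ot'_{n,\rho}) + \bn_2(\ot''_{n,\rho}) = 0$, and plugging in the asymptotics from \eqref{per 4 est}, multiplying by $n^d$ and passing to the limit yields
\[
2\left(\frac{Q(1+\rho^{-1})}{2\cK_{14}}\right)^{\!d}\bn_1^{(d)}(0) + \left(\frac{Q}{\cK_{23}}\right)^{\!d}\bn_2^{(d)}(0) + \left(\frac{Q\rho^{-1}}{\cK_{23}}\right)^{\!d}\bn_2^{(d)}(0) = 0,
\]
i.e. after dividing by $(Q)^d$ a relation of the form $2^{1-d}(1+\rho^{-1})^d\,\cK_{14}^{-d}\,\bn_1^{(d)}(0) + (1+\rho^{-d})\,\cK_{23}^{-d}\,\bn_2^{(d)}(0) = 0$, valid for \emph{every} $\rho>1$. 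I would then choose two distinct values of $\rho$ (or combine with the $\ogamma_n$ relation, which is essentially the $\rho\to 1$ limit): the two resulting relations, together with the $\ogamma_n$ relation, form a linear system in the unknowns $\cK_{14}^{-d}\bn_1^{(d)}(0)$ and $\cK_{23}^{-d}\bn_2^{(d)}(0)$ whose coefficient matrix is nondegenerate because the functions $\rho\mapsto 2^{1-d}(1+\rho^{-1})^d$ and $\rho\mapsto 1+\rho^{-d}$ are not proportional. Hence $\bn_1^{(d)}(0)=\bn_2^{(d)}(0)=0$, closing the induction and proving \eqref{vanishing bn stad}.

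The main obstacle — and the step requiring genuine care rather than routine computation — is justifying that the identity $\sum_{z\in\gamma} G(z)=0$ actually applies to the induced orbits $\ogamma_n$ and $\ogamma_{n,\rho}$, and that the error terms in \eqref{per 2 est} and \eqref{per 4 est} (which are only $\cO(n^{-2})$, one order better than the main term, not arbitrarily small) are in fact strong enough to be absorbed after multiplying by $n^d$: since the leading term is $\cO(n^{-1})$ and the correction is $\cO(n^{-2})$, expanding $\bn_i(\cdot)$ to order $d$ produces a main term of size $n^{-d}$ and a cross term of size $n^{-(d+1)}$, so multiplication by $n^d$ and $n\to\infty$ is legitimate — but this needs the expansions \eqref{per 2 est}, \eqref{per 4 est} together with the inductive vanishing of all lower derivatives, and one must check that the $\cos\varphi$ factor (with $\ovarphi_{n,\rho}=\cO(n^{-1})$) only contributes at order $n^{-(d+2)}$. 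A secondary subtlety is verifying that $\ogamma_n$ and $\ogamma_{n,\rho}$ genuinely lift to honest \emph{periodic billiard orbits} of $\Omega$ in the unfolded channel (so that their lengths lie in $\cL(\Omega)$ and Lemma~\ref{lem: Darboux} forces the corresponding $\Delta$ to be constant), which follows from the unfolding construction and the concavity/uniqueness arguments of Lemmas~\ref{lem: per 2} and~\ref{lem: per 4} but should be stated explicitly.
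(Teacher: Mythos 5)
Your proposal is correct and follows essentially the same route as the paper: the same two families $\ogamma_n$ and $\ogamma_{n,\rho}$, the same asymptotics \eqref{per 2 est} and \eqref{per 4 est}, and the same pair of linear relations in $\cK_{14}^{-d}\bn_1^{(d)}(0)$ and $\cK_{23}^{-d}\bn_2^{(d)}(0)$ resolved by nondegeneracy of the $2\times 2$ system for a suitable $\rho$. The only difference is organizational — you induct on $d$, while the paper argues by contradiction from the minimal nonvanishing orders $d_1,d_2$ and first shows $d_1=d_2$; these are interchangeable here (and note the base case $\bn_i(0)=0$ comes from the gluing points being fixed, not from Lemma~\ref{lem: arclength}, which concerns the period-two orbit).
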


 Stadium Rigidity Theorem 
immediately follows from Proposition~\ref{prop: vanishing bn stad},
since $\bn_1$ and $\bn_2$ are constantly zero as they are
both analytic. In the rest of this section, we prove
Proposition~\ref{prop: vanishing bn stad}.

\begin{proof}[Proof of Proposition~\ref{prop: vanishing bn stad}]
If \eqref{vanishing bn stad} does not hold, we can find a minimal
integer $d_i\ge 1$ such that $c_i:=\bn_i^{(d_i)}(0)\ne 0$, $i=1, 2$.
Recall that $\bn_1(0)=\bn_2(0)=0$.

We then further find $u>0$ such that the Taylor expansion is
valid for $\bn_1(s)$ on $s\in [0, u]$ up to order $d_1$, and for
$\bn_2(t)$ on $t\in [0, u]$ up to order $d_2$. In this way, for
any $s, t\in [0, u]$, we write
\beq\label{Taylor expansion}
\bn_1(s) = c_1 s^{d_1} + \cO(s^{d_1+1}), \  \ \text{and} \ \
\bn_2(t) = c_2 t^{d_2} + \cO(t^{d_2+1}).
\eeq
By Lemma~\ref{lem: per 2}, for sufficiently large $n\ge 1$, there is a period two trajectory
$\ogamma_n=\overline{(\os_n, 0) (\ot_n, 0)}$ for the induced billiard map,
such that $\os_n, \ot_n \in [0, u]$. Then by the assumption $\sum_{\ogamma_n} G=0$,
we immediately get
\beqn
\bn_1(\os_n) + \bn_2(\ot_n)=0.
\eeqn
By \eqref{per 2 est} and \eqref{Taylor expansion}, we obtain
\beq\label{cond c d}
c_1\left(\frac{Q}{n\cK_{14}}\right)^{d_1} + c_2 \left(\frac{Q}{n\cK_{23}}\right)^{d_2}
+ \cO\left(\frac{1}{n^{\min\{d_1, d_2\}+1}}\right)=0.
\eeq
We claim that $d_1=d_2$. Otherwise, to fix ideas let us assume
that $d_2>d_1=\min\{d_1, d_2\}$.
Multiplying $\left(\frac{Q}{n\cK_{14}}\right)^{-d_1}$ on both sides
of \eqref{cond c d}, and then letting $n\to \infty$,
we immediately get $c_1=0$, which contradicts  our choice of $d_1$.

Now we set $d_1=d_2=d$. We use the same trick again,
that is, multiplying $\left(\frac{Q}{n}\right)^{-d}$ on both sides
of \eqref{cond c d}, and then letting $n\to \infty$, we get
\beq\label{rel c1 c2 1}
c_1 \cK_{14}^{-d} + c_2 \cK_{23}^{-d}=0.
\eeq

By Lemma~\ref{lem: per 4}, for any fixed $\rho>1$ and any
sufficiently large number $n\ge 1$, there is a period four trajectory
$$
\ogamma_{n, \rho}=\overline{(\ot_{n, \rho}', 0) (\os_{n, \rho}, \ovarphi_{n, \rho})
(\ot_{n, \rho}'', 0) (\os_{n, \rho}, -\ovarphi_{n, \rho})},
$$
for the induced billiard map,
such that $\os_{n, \rho}, \ot_{n, \rho}',  \ot_{n, \rho}''\in [0, u]$
and the angle $\ovarphi_{n, \rho}$ is very close to zero.
By the assumption $\sum_{\ogamma_{n, \rho}} G=0$, we have
\beqn
2\bn_1(\os_{n, \rho})\cos\ovarphi_{n, \rho} + \bn_2(\ot_{n, \rho}') + \bn_2(\ot_{n, \rho}'')=0.
\eeqn
By \eqref{per 4 est} and \eqref{Taylor expansion},
as well as the fact that
$\cos\ovarphi_{n, \rho}=1+\cO(\ovarphi_{n, \rho}^2)=1+\cO(n^{-2})$ as $n\to \infty$,
we obtain
\beqn\label{cond c d 1}
2c_1\left(\frac{Q(1+\rho^{-1})}{2n\cK_{14}}\right)^{d}
+c_2 \left(\frac{Q}{n\cK_{23}}\right)^{d} + c_2\left(\frac{Q\rho^{-1}}{n\cK_{23}}\right)^{d}
+ \cO\left(\frac{1}{n^{d+1}}\right)=0.
\eeqn
Multiplying $\left(\frac{Q}{n}\right)^{-d}$ on both sides of the above, and letting $n\to \infty$,
we get
\beq\label{rel c1 c2 2}
c_1 \cdot 2^{1-d}\cK_{14}^{-d} \left(1+\rho^{-1}\right)^{d}
+c_2 \cK_{23}^{-d} (1+\rho^{-d}) =0.
\eeq
Combining \eqref{rel c1 c2 1} and \eqref{rel c1 c2 2}, we obtain a homogeneous system
of linear equations with two variables $c_1$ and $c_2$, whose
coefficient matrix has determinant
\beqn
\det
\begin{pmatrix}
\cK_{14}^{-d} & \cK_{23}^{-d} \\
2^{1-d}\cK_{14}^{-d} \left(1+\rho^{-1}\right)^{d} & \cK_{23}^{-d} (1+\rho^{-d})
\end{pmatrix}
=\frac{1+\rho^{-d}-2^{1-d}(1+\rho^{-1})^d}{\cK_{14}^d \cK_{23}^d}.
\eeqn
For any $d\ge 1$, it is easy to pick $\rho>1$ such that the above determinant
is non-zero, and hence $c_1=c_2=0$, which is a contradiction.
Therefore, we must have $\bn_1^{(d)}(0)=\bn_2^{(d)}(0)=0$ for any $d\ge 1$.
\end{proof}

\bigskip

\section{Analysis of Periodic Orbits with Rotation Number $\pm\frac{n}{2n+1}$}\label{sec8}

\subsection{Periodic Orbits with Rotation Number  $\pm\frac{n}{2n+1}$}

Let $\Omega$ be a Bunimovich squash-type stadium  in $\cM^m_{ss}$ for $m\ge 3$.
We introduce all the possible periodic $(2n+1)$ orbits
whose rotation number is either $\frac{n}{2n+1}$ or $-\frac{n}{2n+1}$.
More precisely, for any integer $n\ge 1$, 
we consider the periodic orbit $\gamma_{n}^i$
associated with the symbolic code
\beq\label{symbol ga n i}
(i\underbrace{12\cdots 12}_{2n}),
\eeq
for $i=1, 2, 3, 4$. 
The existence and uniqueness of the orbit $\gamma_{n}^{i}$
can be proven in a similar fashion as in Lemma~\ref{lem: gamma_n uniq},
that is,
\begin{itemize}
\item the orbit $ \gamma^1_n$ is the unique global maximum point of the length function
\beqn
L(r_0, r_1, \dots, r_{2n})=\sum_{k=0}^{2n} \tau(r_k, r_{k+1}), \ \   \text{with} \ r_{2n+1}=r_0,
\eeqn
for $(r_0, r_1, \dots, r_{2n})\in  \Gamma_1\times  (\Gamma_1\times \Gamma_2)^n$
with the restriction that $r_0\ge 0 \ge r_1$.
Such restriction makes $r_0$ and $r_1$ fall in different sides of  $x$. 
Similarly, $ \gamma^2_n$ is the global maximum point of the same length function
but for $(r_0, r_1, \dots, r_{2n})\in  \Gamma_2\times  (\Gamma_1\times \Gamma_2)^n$
with the restriction that $r_0\ge 0 \ge r_{2n}$. 
\item 
the orbit $\gamma^3_n$ corresponds to unfolded orbit $\widetilde{\gamma}^3_n$,
which is the unique global maximum point of the length function 
on the double cover table $\widetilde{\Omega}$ (see Fig.~\ref{fig:stadia}, right, which has 
a symmetric reflection line through $\Gamma_3$), given by
\beqn
L(r_0, r_1, \dots, r_{2n})=\sum_{k=0}^{2n-1} \widetilde{\tau}(r_k, r_{k+1})
\eeqn
for $(r_0, r_1, \dots, r_{2n})\in  \widetilde{\Gamma}_2\times  (\Gamma_1\times \Gamma_2)^n$
with the restriction that $r_0^*=r_{2n}$, where $r_0^*\in \Gamma_2$ denotes the reflected point
of $r_0\in \widetilde{\Gamma}_2$ by the symmetry line through $\Gamma_3$. 
The orbit $\gamma^4_n$ can be obtained in a similar fashion by considering the double cover table
with symmetry line through $\Gamma_4$.
\end{itemize}

As the winding number of a periodic orbit is counted in the counter-clockwise direction,
the rotation number of $\gamma_n^i$ equals to $(-1)^i\frac{n}{2n+1}$ for $i=1, 2, 3, 4$. 
Switching the roles of $\Gamma_1$ and $\Gamma_2$,
we could also consider the periodic orbit $\hgamma_{n}^i$ associated with
\beqn
(i\underbrace{21\cdots 21}_{2n}).
\eeqn
It is easy to see that $\hgamma_n^i$ is the inverse orbit of $\gamma_n^i$. Therefore,
$\hgamma_n^i$ and $\gamma_n^i$ are of the same total length,
and the rotation number of $\hgamma_n^i$ equals to $(-1)^{i+1}\frac{n}{2n+1}$.

\medskip

Along any of the above orbits, i.e., $\gamma^i_n$ and $\hgamma^i_n$ with $i=1, 2, 3, 4$,  
a billiard ball moves from an initial position on $\Gamma_i$,
collides successively between $\Gamma_1$ and $\Gamma_2$
for $2n$ times, and then gets back to the initial position on $\Gamma_i$.
It is easy to see that the first $n$ collisions approach the period two orbit $\gamma^*$, while
the last $n$ collisions become away from $\gamma^*$. 
There are essentially two types: 
\begin{itemize}
\item $\gamma^i_{n}$ and $\hgamma^i_{n}$ are {\it symmetric} for $i=3, 4$, i.e., 
the number of times that the billiard ball lies on $\Gamma_1$
is the same as that on $\Gamma_2$ in one period; 
\item $\gamma^i_{n}$ and $\hgamma^i_{n}$ are {\it asymmetric} for $i=1, 2$, i.e., 
the number of times that the billiard ball lies on $\Gamma_1$
is different from that on $\Gamma_2$ in one period.
\end{itemize}
It turns out that the trace of the above periodic orbits in the same type
are similar. To illustrate the corresponding dynamics, we picture 
the following two sequence of periodic orbits $\gamma_{n}^2$ and $\gamma_{n}^3$.
Recall that the period two orbit is denoted by $\gamma^*=\overline{AB}$.
Along the asymmetric orbit $\gamma_{n}^{2}$ (see Fig.~\ref{fig:gamma23}, upper), 
a billiard ball lies on the closest position near $A$ (or $B$) at the $n$-th and 
near $B$ (or $A$) at the $(n+1)$-th collision if $n$ is odd (if $n$ is even); 
along the symmetric orbit
$\gamma_{n}^3$ (see Fig.~\ref{fig:gamma23}, lower), every
free path crosses $\overline{AB}$ except the middle path
from the $n$-th collision to the $(n+1)$-th collision.

\begin{figure}[h]
\begin{center}
\includegraphics[width=1\textwidth]{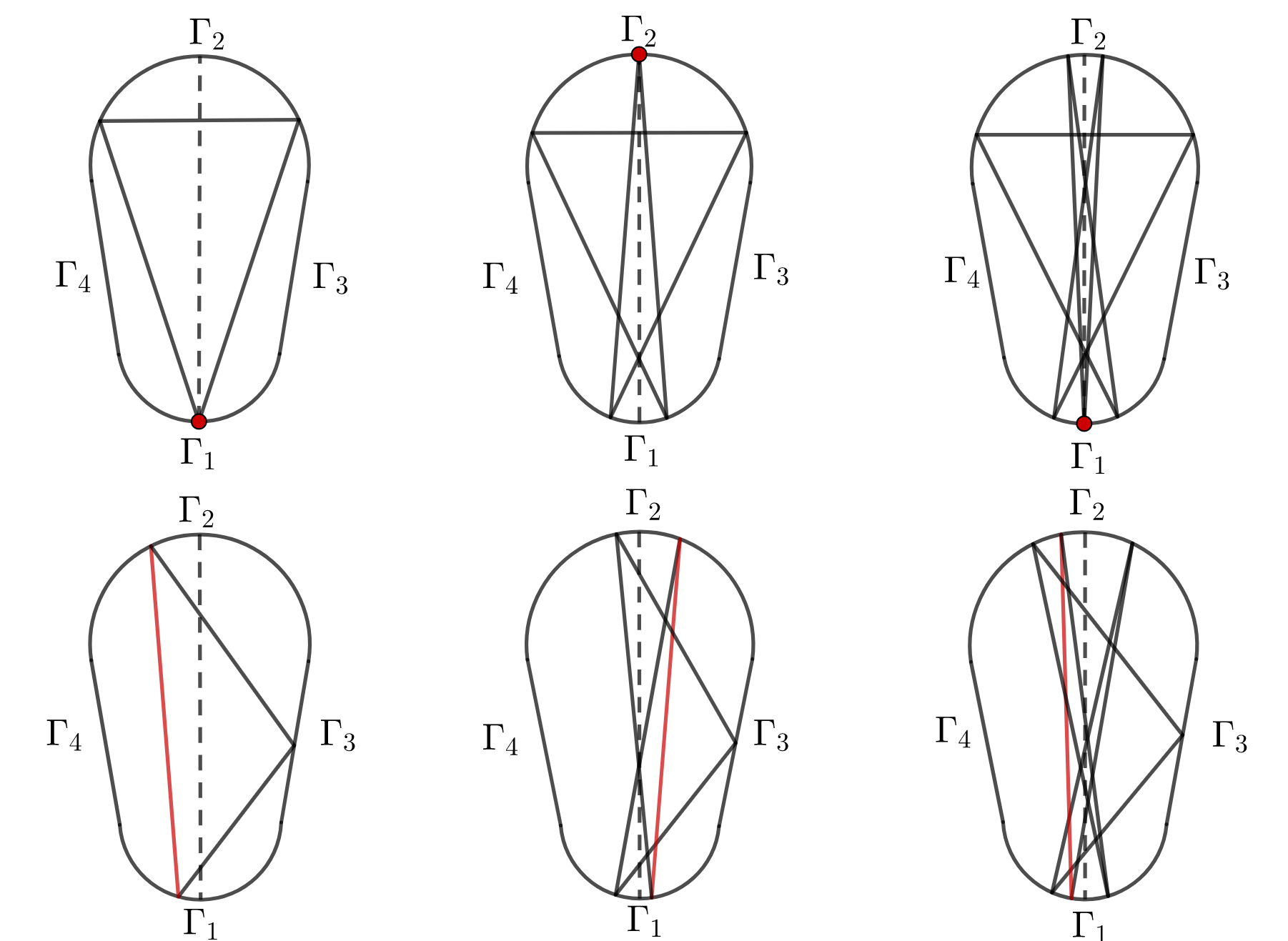}
\end{center}
\caption{Upper: $\gamma_{n}^{2}$ with $n=1, 2, 3$; \
Lower: $\gamma_{n}^{3}$ with $n=1, 2, 3$.}
\label{fig:gamma23}
\end{figure}

\subsection{The Homoclinic Semi-orbits and the Shadowing Estimates}\label{subsect5.3}

For $i=1, 2, 3$ or $4$,
we denote the collision points of $\gamma_{n}^i$ by
\begin{align*}
y^i_{n}(0) \mapsto x^i_n(1) \mapsto y^i_n(1) \mapsto \dots
\mapsto x^i_{n}(n) \mapsto y^i_{n}(n) ,
\end{align*}
where 
\begin{itemize}
\item
at the initial stage, we denote by $y^i_n(0)=(t^i_n(0), \psi^i_n(0))$ the collision point on $\Gamma_i$;
\item
at the stage of $2n$ successive collisions between $\Gamma_1$ and $\Gamma_2$,
we denote
\begin{align*}
\text{on}\ \Gamma_1: \ x^i_{n}(k) & =(s^i_{n}(k), \varphi^i_{n}(k)), \ k=1, 2, \dots, n; \\
\text{on}\ \Gamma_2: \ y^i_{n}(k) & =(t^i_{n}(k), \psi^i_{n}(k)), \ k=1, 2, \dots, n.
\end{align*}
\end{itemize}
Similarly, we denote the collision points of the inverse orbit $\hgamma_{n}^i$ by
\begin{align*}
\hx^i_{n}(0) \mapsto \hy^i_n(1) \mapsto \hx^i_n(1) \mapsto \dots
\mapsto \hy^i_{n}(n) \mapsto \hx^i_{n}(n) ,
\end{align*}
with coordinates $\hx^i_n(k)=(\widehat{s}^i_{n}(k), \widehat{\varphi}^i_{n}(k))$
for $k=0, 1, \dots, n$ and 
$\hy^i_n(k)=(\widehat{t}^i_{n}(k), \widehat{\psi}^i_{n}(k))$ for $k= 1, \dots, n$.
Unlike the palindromic orbits $\gamma_n$, the time reversibility does not hold 
for $\gamma^i_n$ or $\hgamma_n^i$ itself,
but $\gamma_n^i$ and $\hgamma_n^i$ form an involution pair, that is, 
\beq\label{involution s}
\begin{split}
\widehat{s}_n^i(n+1-k) = s_n^i(k), \ \widehat{\varphi}_n^i(n+1-k) = -\varphi_n^i(k), \ k=1, 2, \dots, n, \\
\widehat{t}_n^i(n+1-k) = t_n^i(k), \ \widehat{\psi}_n^i(n+1-k) = -\psi_n^i(k), \ k=1, 2, \dots, n.
\end{split}
\eeq
Also, $\widehat{s}_n^i(0)=t_n^i(0)$ and  
$\widehat{\varphi}_n^i(0)=-\psi_n^i(0)$.  \\

\medskip
 
Using similar arguments as in Lemma~\ref{lem: rough esta} and Lemma~\ref{lem: homoclinic}, 
we can define a homoclinic semi-orbit of the period two orbit $\gamma^*=\overline{xy}$  by
\beqn
\gamma^i_\infty:=( x^i_\infty(0)   \ x^i_\infty(1) \ y^i_\infty(1) \ x^i_\infty(2) \ y^i_\infty(2)  \dots ),
\eeqn
which corresponds to the symbolic code $(  i 1212 \cdots)$,
where 
\beqn
\begin{split}
x^i_\infty(k) &=\lim_{n\to\infty} x^i_n(k), \ k= 1,    2, \dots; \\
y^i_\infty(k) &=\lim_{n\to\infty} y^i_n(k), \ k= 0,  1,    2, \dots.
\end{split}
\eeqn
We write the coordinates $x^i_\infty(k)=(s^i_{\infty}(k),  \varphi^i_{\infty}(k))$
for $k=1, 2, \dots$, and 
$y^i_\infty(k)=(t^i_{\infty}(k),  \psi^i_{\infty}(k))$ for $k= 0, 1, 2, \dots$.

\medskip

Similar to Lemma~\ref{lem: finer est} and Lemma~\ref{lem: finer est'},
we can provide estimates for the convergence of $\gamma^i_\infty$ to $\gamma^*$, 
and for the shadowing of $\gamma^i_n$ along $\gamma^i_\infty$ as follows.
 
\begin{lemma}\label{lem: finer est 1}  Let $i=1, 2, 3$ or $4$.
\begin{itemize}
\item[(a)]
The following estimates hold for the homoclinic orbit $\gamma_\infty^i$:
{\allowdisplaybreaks
\beq
\label{finer est ga n i}
\begin{split}
x^i_\infty(k)=\lambda^{-k}(C_s^{i}, C_\varphi^{i}) +  \cO(\lambda^{-1.5k}),
& \ \ k= 1,    2, \dots, \\
y^i_\infty(k)=\lambda^{-k} (C_t^{i}, C_\psi^{i}) + \cO(\lambda^{-1.5k}),
& \ \ k= 0,  1,   2,\dots,
\end{split}
\eeq
}where the constants $C^i_s, C^i_\varphi, C^i_t$ and $C_\psi^{i}$ satisfy 
the following relation:
\beq\label{C relation 1}
\dfrac{C_\varphi^{i}}{C_s^{i}}=\frac{\lambda^{-1}-\lambda}{4a_2\tau^*}, \ \
\dfrac{C_\psi^{i}}{C_t^{i}}=\frac{\lambda^{-1}-\lambda}{4a_1\tau^*}, \ \ \text{and} \ \
\dfrac{C_t^{i}}{C_s^{i}}=-\frac{1+\lambda^{-1}}{2a_2}=
-\frac{2a_1}{1+\lambda}.
\eeq
\item[(b)]
The following estimates hold for the periodic orbit $\gamma_n^i$:
{\allowdisplaybreaks
\beq\label{finer est ga n i i}
\begin{split}
x^i_n(k)-x^i_\infty(k)=\lambda^{k-n}  (C^i_{s,k}, C^i_{\varphi, k}) + \cO(\lambda^{0.5k-n}) ,
& \ \ k=1, \dots, \left\lfloor \frac{n}{2} \right\rfloor +1, \\
y^i_n(k)-y^i_\infty(k)=\lambda^{k-n} (C^i_{t,k}, C^i_{\psi, k}) + \cO(\lambda^{0.5k-n}),
& \ \ k=0, 1, \dots, \left\lfloor \frac{n}{2} \right\rfloor +1.
\end{split}
\eeq
}where the constants $C^i_{s,k}, C^i_{\varphi, k}, C^i_{t,k}$ and $C^i_{\psi, k}$
are given by
{\allowdisplaybreaks
\beq\label{C1 relation 1}
\begin{split} 
C^i_{s,k}=C_s^i \lambda^{-1} (\lambda^{-2k}+\Theta_{i}), \ \ & \ \ C^i_{\varphi, k}= C^i_\varphi \lambda^{-1} (\lambda^{-2k}-\Theta_i), \\
C^i_{t,k}=C^i_t\lambda^{-1} (\lambda^{-2k}+\Theta_{i}), \  \ & \ \  C^i_{\psi, k}=C^i_\psi \lambda^{-1} (\lambda^{-2k}-\Theta_i).
\end{split}
\eeq
}Here $\Theta_{i}=-1$ if $i=1, 2$, and $\Theta_{3}=\dfrac{\tan\theta_1 + \cK_A }{\tan\theta_1 - \cK_A}$, $\Theta_{4}=\dfrac{\tan\theta_2 + \cK_B }{\tan\theta_2 - \cK_B}$.
\item[(c)]
Alternatively, the following estimates hold for the periodic orbit $\gamma_n^i$:
{\allowdisplaybreaks
\beq
\label{finer est ga n i infty''''}
\begin{split}
x^i_n(k)-x^i_\infty(k)=\lambda^{k-n} \left[v^i_\infty(2k) + o(1)\right] , \ \ \ \ \ \
& \ \ k=1, \dots, \left\lfloor{\frac{n}{2}}\right\rfloor+1, \\
y^i_n(k)-y^i_\infty(k)=\lambda^{k-n} \left[ v^i_\infty(2k+1) + o(1)\right], \  
& \ \ k= 0, 1, \dots, \left\lfloor{\frac{n}{2}}\right\rfloor+1,
\end{split}
\eeq
}where the vectors $v^i_\infty(m)\in \IR^2$, $m=0^\pm, 0, 1, 2, \dots$, 
has uniformly bounded magnitudes. 
\end{itemize}
\end{lemma}

The proof of Lemma~\ref{lem: finer est 1} is similar to 
that of Lemma~\ref{lem: finer est} and Lemma~\ref{lem: finer est'},
and thus  we leave it to the reader. 
The only difference between the proofs of 
Lemma~\ref{lem: finer est} and Lemma~\ref{lem: finer est 1}
is on how to obtain the constant $\Theta_i$. That is, in Lemma~\ref{lem: finer est} we 
obtain $\Theta=1$ using the self
time-reversibility for the palindromic orbit $\gamma_n$, while 
in Lemma~\ref{lem: finer est 1} we obtain the value of $\Theta_i$ by the fact that 
\begin{itemize}
\item[(i)]
the middle collision point  is close to $\gamma^*$ for $i=1, 2$; 
\item[(ii)]
the middle free path is almost parallel to $\gamma^*$ for $i=3, 4$. 
\end{itemize}
Similar to Lemma~\ref{lem: finer est 1}, corresponding estimates also hold for  
the inverse periodic orbit $\hgamma^i_n$ and the corresponding homoclinic semi-orbit 
$\hgamma^i_\infty$, where we simply put `hat'  on each term, i.e.,
$\widehat{x}^i_n$, $\widehat{x}^i_\infty$, $\widehat{C}^i_s$, $\widehat{\Theta}_i$, etc.
By the above facts (i) and (ii), we shall have 
\beq
\label{compare coeff}
\begin{split}
& (\hC^i_s, \hC^i_\varphi)=(-C^i_s, C^i_\varphi) 
\ \text{and} \ 
(\hC^i_t, \hC^i_\psi)=(-C^i_t, C^i_\psi) 
\ \ \text{if} \ i=1, 2; \\
& (\hC^i_s, \hC^i_\varphi)=(C^i_s, -C^i_\varphi) 
\ \text{and} \ 
(\hC^i_t, \hC^i_\psi)=(C^i_t, -C^i_\psi) 
\ \ \text{if} \ i=3, 4.
\end{split}
\eeq

 \bigskip

\section{Proof of Theorem~\ref{thm: main3}}\label{sec: proof of thm3}

\subsection{The Length Growth of the Homoclinic Semi-orbit $\gamma_{\infty}^{i}$}
\label{sec:gm-infty-length}

In this subsection, we show that for $i=1,2,3$ or $4$, the length growth
of the homoclinic semi-orbit $\gamma_\infty^{i}$ is exponentially asymptotic to that of the period two orbit
$\gamma^*$. More precisely, we denote
$l_\infty^i(0)=\tau(t_\infty^i(0), s_\infty^i(1))$ and 
\beq\label{tau k inf}
l_{\infty}^i(k)=\tau(s_\infty^i(k), t_{\infty}^i(k))+
\tau(t_{\infty}^i(k), s_\infty^i(k+1)) \ \ \text{for any} \ k\ge 1.
\eeq

\begin{lemma}\label{lem: sum 1}
There exists a constant $\cQ_\infty^i\in \IR$ such that  
\beq\label{sum 1}
l^i_{\infty}(k) -2 \tau^* = \cQ_\infty^i \lambda^{-2k}
+ \cO\left(\lambda^{-2.5 k}\right).
 \eeq
\end{lemma}

\begin{proof} 
Note that $\tau(s_\infty^i(k), t_{\infty}^i(k))- \tau^*$ is the difference of the free path
travelled by moving from $(s, t)=(0, 0)$ to 
$$(s, t)= (s_\infty^i(k), t_{\infty}^i(k))=\lambda^{-k}(C_s^{i}, C_t^{i}) +  \cO(\lambda^{-1.5k}),$$ 
where the estimates are due to Lemma~\ref{lem: finer est 1}. By \eqref{var path 0},
\begin{align*}
& \tau(s_\infty^i(k), t_{\infty}^i(k))- \tau^* \\
=& \frac{1}{2\tau^*}\left[ a_1 \left(s_\infty^i(k) \right)^2
+ 2s_\infty^i(k)  t^i_\infty(k) +a_2 \left(t^i_\infty(k)\right)^2 \right]
 + \cO\left(\left(\left(s^i_\infty(k)\right)^2+
 \left(t^i_\infty(k)\right)^2\right)^{\frac32}\right) \\
=&
\dfrac{a_1\left(C_s^i \right)^2 + 2C_s^iC_t^i + a_2\left(C_t^i \right)^2 }{2\tau^*} \cdot \lambda^{-2k} +
\cO\left(\lambda^{-2.5 k}\right)
=: \cQ(C_s^i, C_t^i)\cdot \lambda^{-2k} +
\cO\left(\lambda^{-2.5 k}\right),
\end{align*}
where $\cQ(X, Y)$ is a quadratic form defined by 
\beq
\label{def quadratic form}
\cQ(X, Y)=\dfrac{a_1 X^2 + 2XY + a_1 Y^2}{2\tau^*}.
\eeq
In a similar fashion, we also get
\beqn
\tau(t_{\infty}^i(k), s_\infty^i(k+1)) -\tau^* = 
\cQ(\lambda^{-1} C_s^i, C_t^i) \cdot \lambda^{-2k} +
\cO\left(\lambda^{-2.5 k}\right).
\eeqn
Hence \eqref{sum 1} is proven if we set
$
\cQ_\infty^i:=\cQ(C_s^i, C_t^i)+\cQ(\lambda^{-1} C_s^i, C_t^i).
$
\end{proof}
 
Similarly, we denote by
$
\hgamma_\infty^i=(\hx^i_n(0) \hy^i_n(1) \hx^i_n(1) \dots)
$
the homoclinic semi-orbit obtained by the limit of 
the inverse periodic orbits $\hgamma_n^i$.
Set 
$\hl_\infty^i(0)=\tau(\hs_\infty^i(0), \htt_\infty^i(1))$ and 
\beq\label{tau k inf inverse}
\hl_{\infty}^i(k)=\tau(\htt_\infty^i(k), \hs_{\infty}^i(k))+
\tau(\hs_{\infty}^i(k), \htt_\infty^i(k+1)) \ \ \text{for any} \ k\ge 1.
\eeq
Similar to Lemma~\ref{lem: sum 1}, there exists 
a constant $\hcQ_\infty^i\in \IR$ such that  
\beq\label{sum 1'}
\hl^{\, i}_{\infty}(k) -2 \tau^* = \hcQ_\infty^i \lambda^{-2k}
+ \cO\left(\lambda^{-2.5 k}\right).
 \eeq

Now we define $B^i=B^i_0+B^i_+ + B^i_-$, where 
\beq\label{def B i}
B^i_0=2\tau^*- l^i_\infty(0) - \hl^{\, i}_\infty(0) , 
\ \ 
B^i_+=\sum_{k=1}^\infty \left( 2\tau^* - l^i_{\infty}(k)  \right), 
\ \
B^i_-=\sum_{k=1}^\infty \left( 2\tau^* - \hl^{\, i}_{\infty}(k) \right).
\eeq

\subsection{The Length Difference between $\gamma_{n}^{i}$ and $\gamma_{\infty}^{i}$}

For the periodic orbit $\gamma_n^i$, 
we denote
$l_n^i(0)=\tau(t_n^i(0), s_n^i(1))$ and 
\beq\label{tau k n}
l_{n}^i(k)=\tau(s_n^i(k), t_{n}^i(k))+
\tau(t_{n}^i(k), s_n^i(k+1)) \ \ \text{for any} \ 1\le k\le n.
\eeq
Similarly, for the inverse periodic orbit $\hgamma_n^i$, we denote
$\hl_n^i(0)=\tau(\hs_n^i(0), \htt_n^i(1))$ and 
\beq\label{tau k n inverse}
\hl_{n}^i(k)=\tau(\htt_n^i(k), \hs_{n}^i(k))+
\tau(\hs_{n}^i(k), \htt_n^i(k+1)) \ \ \text{for any} \ 1\le  k\le n.
\eeq

We shall compare the length difference between 
$(\gamma_{n}^{i}$,  $\hgamma_{n}^{i})$ and $(\gamma_{\infty}^{i}$, $\hgamma_{\infty}^{i})$.
More precisely,
we set $\ell=\lfloor n/2\rfloor$, $\ell'=n-1-\ell$.
The total length of $\gamma_n^i$ (and $\hgamma_n^i$) is given by
\beq
\label{def L n i}
L\left(\gamma_n^i \right)=L\left(\hgamma_n^i \right)
=\sum_{k=0}^{\ell} l_n^i(k) 
+\sum_{k=0}^{\ell'} \hl_n^i(k) 
+\tau\left( s^i_n(\ell+1), \htt^i_n(\ell'+1) \right),
\eeq
in which we notice that $\htt^i_n(\ell'+1)=t_n^i(\ell+1)$. 
Correspondingly, we define
\beq
\label{def L infty i}
L_\infty^i(n)=
\sum_{k=0}^{\ell} l_\infty^i(k) 
+\sum_{k=0}^{\ell'} \hl_\infty^i(k) 
+\tau\left( s^i_\infty(\ell+1), \htt^i_\infty(\ell'+1) \right),
\eeq
where $l_\infty^i(k) $ and $\hl_\infty^i(k)$
are given by \eqref{tau k inf} and \eqref{tau k inf inverse} respectively.

\begin{lemma}
\label{lem: sum n inf}
There exists 
a constant $\cC_\infty^i\in \IR$ such that  
\beq\label{n inf sum 1}
L\left(\gamma_n^i \right) - L_\infty^i(n)
=\cC_\infty^i \lambda^{-n} + \cO\left(\lambda^{-1.5n}\right). 
 \eeq
\end{lemma}

\begin{proof}
By Lemma~\ref{lem: var path} and Lemma~\ref{lem: finer est 1},
for any $k=0, 1, \dots, \ell+1$, 
we can write
\beqn
\tau(s_n^i(k), t_n^i(k)) -\tau(s_{\infty}^i(k),t_{\infty}^i(k))=I_k^0 + J_k^0 +\cO\left(\lambda^{-1.5n}\right),
\eeqn
where \begin{eqnarray*}
I_k^0 &=& -\sin  \varphi^i_{\infty}(k) \ (s_n^i(k)-s^i_{\infty}(k))
+ \sin  \psi^i_{\infty}(k) \ (t_n^i(k)-t_\infty^i(k)),   \\
J_k^0 &=& \frac{1}{2\tau(s^i_\infty(k), t^i_\infty(k))}\left[ 
\alpha(s^i_{\infty}(k))(s_n^i(k)-s^i_{\infty}(k))^2  
+\alpha(t^i_{\infty}(k)) (t^i_n(k)-t^i_\infty(k))^2 \right.\\
& &  \ \ \ \ \ \ \ \ \ \ \ \ \ \ \ \  \ \ \left.
+2\cos  \varphi^i_{\infty}(k) \cos \psi^i_{\infty}(k)  (s_n^i(k)-s^i_{\infty}(k)) (t^i_n(k)-t^i_\infty(k)) \right].
\end{eqnarray*}
In a similar fashion, we can write
\beqn
\begin{split}
&\tau(t_n^i(k), s_n^i(k+1)) -\tau(t_{\infty}^i(k), s_{\infty}^i(k+1))=I_k^1 + J_k^1 +\cO\left(\lambda^{-1.5n}\right), \\
&\tau(\htt_n^{\, i}(k), \hs_n^{\, i}(k), ) 
-\tau(\htt_{\infty}^{\, i}(k), \hs_{\infty}^{\, i}(k))=I_k^2 + J_k^2 +\cO\left(\lambda^{-1.5n}\right),\\
&\tau(\hs_n^{\, i}(k), \htt_n^{\, i}(k+1)) -\tau(\hs_{\infty}^{\, i}(k), \htt_{\infty}^{\, i}(k+1))=I_k^3 + J_k^3 +\cO\left(\lambda^{-1.5n}\right),
\end{split}
\eeqn
where $I_k^j$ (resp. $J_k^j$), $j=1, 2, 3$, are of similar form as 
$I_k^0$ (resp. $J_k^0$). 
Also, 
\beqn
\tau\left( s^i_\infty(\ell+1), \htt^i_\infty(\ell'+1) \right) - 
\tau\left( s^i_\infty(\ell+1), \htt^i_\infty(\ell'+1) \right)
=I_* + J_* + \cO\left(\lambda^{-1.5n}\right).
\eeqn
Therefore, 
\beqn
L\left(\gamma_n^i \right) - L_\infty^i(n)
=I + J +\cO\left(\lambda^{-1.5n}\right),
\eeqn
where the linear order summation $I$ is 
an telescopic sum, i.e.,
\beqn
\begin{split}
I &:=\sum_{k=0}^{\ell } (I^0_k + I^1_k) + 
\sum_{k=0}^{\ell' } (I^2_k + I^3_k) + I_*
\\
&=
-\sin \varphi_\infty^i(\ell+1)  s^i_\infty(\ell+1)  
-\sin \widehat{\varphi}_\infty^{\, i}(\ell+1)  \hs^i_\infty(\ell+1)=\cO\left(\lambda^{-1.5n}\right),
\end{split}
\eeqn
where the error estimate is due to Lemma~\ref{lem: finer est 1}, Part (a) and \eqref{compare coeff}. 
Now we focus on the computation of the second order summation $J$,
and by Lemma~\ref{lem: finer est 1}, Part (c), we have 
\beqn
J:=\sum_{k=0}^{\ell } (J^0_k + J^1_k) + 
\sum_{k=0}^{\ell' } (J^2_k + I^3_k) + J_*
=
\sum_{k=\lfloor\ell/2\rfloor}^{\ell } (J^0_k + J^1_k) + 
\sum_{k=\lfloor\ell/2\rfloor}^{\ell' } (J^2_k + I^3_k) + J_* + \cO\left(\lambda^{-1.5n}\right).
\eeqn
For any $k\in [\ell/2, \ell]$, by Lemma~\ref{lem: finer est 1}, Part (a) again,
we have  
\beqn
\cos  \varphi^i_{\infty}(k) \approx 1, \ \ 
\cos \psi^i_{\infty}(k)\approx 1, \ \ 
\alpha(s^i_{\infty}(k)) \approx a_1, \ \
\alpha(s^i_{\infty}(k)) \approx a_2,  \ \
\tau(s^i_\infty(k), t^i_\infty(k)) \approx \tau^*, 
\eeqn
up to errors of order $\cO(\lambda^{-2k})$.
By Lemma~\ref{lem: finer est 1}, Part (b), we have 
\beqn
|s_{n}^i(k)-s_{\infty}^i(k)|\approx 
C_s^i\Theta_i\lambda^{k-n-1}, \ \ 
|t_n^i(k)-t_{\infty}^i(k)|\approx
C_t^i\Theta_i\lambda^{k-n-1}.
\eeqn
up to errors of order $\cO(\lambda^{0.5k-n-1})$. 
Therefore, we have 
\beqn
J_k^0=\Theta_i^2 \cQ(C_s^i, C_t^i) \cdot \lambda^{2(k-n-1)} + 
\cO\left(\lambda^{2(0.5 k-n-1)}\right),
\eeqn
where $\cQ(\cdot, \cdot)$ is the quadratic form $\cQ(\cdot, \cdot)$ defined in \eqref{def quadratic form}.
Hence
\beqn
\label{sum J 0}
\sum_{k=0}^{\ell } J^0_k = 
\sum_{k=\lfloor\ell/2\rfloor}^{\ell } J^0_k + \cO\left(\lambda^{-1.5n}\right)
=\cC_{\infty, 0}^i \lambda^{-n} + 
\cO\left(\lambda^{-1.5n}\right),
\eeqn
where we set
$
\cC^i_{\infty, 0}:= 
\begin{cases}
\Theta_i^2 \cQ(C_s^i, C_t^i) /(\lambda^2-1), \  &
\text{if}  \ n=2\ell, \\ 
\lambda^{-1} \Theta_i^2 \cQ(C_s^i, C_t^i) /(\lambda^2-1), \  &
\text{if}  \ n=2\ell+1.
\end{cases}
$
In a similar fashion, we can show that there are constants 
$\cC^i_{\infty, j}$, $j=1, 2, 3, *$, such that 
sums of $J^j_k$, $j=1, 2, 3$, and $J_*$ satisfy similar estimates as in \eqref{sum J 0}. 
Hence \eqref{n inf sum 1} holds if we take 
$\cC^i_{\infty}=\sum_{j=0}^3 \cC^i_{\infty, j} + \cC^i_{\infty, *}$.
 \end{proof}

\subsection{Proof of Theorem~\ref{thm: main3}}

Recall the definitions of $B_i$ and $L_\infty^i(n)$ 
in \eqref{def B i} and \eqref{def L infty i} respectively.
Note that
\beqn
\begin{split}
  & L_\infty^i(n) -  (2n+1) \tau^*   + B_i \\
=& \sum_{k>\ell}  \left(l^i_{\infty}(k)  - 2\tau^*\right) 
+\sum_{k>\ell'} 
\left(\hl^{\, i}_{\infty}(k)  - 2\tau^*\right)
+
\left[\tau\left( s^i_\infty(\ell+1), \htt^i_\infty(\ell'+1) \right)  -\tau^*\right].
\end{split}
\eeqn
Recall that $\ell=\lfloor n/2\rfloor$ and $\ell'=n-1-\ell$. 
By Lemma \eqref{def L infty i} and also apply its proof to the last term in the above, 
it is easy to show that there is a constant $\cE^i_\infty$ such that 
\beqn
 L_\infty^i(n) -  (2n+1) \tau^*   + B_i 
= \cE^i_\infty \lambda^{-n} + \cO\left(\lambda^{-1.5n}\right).
\eeqn
By Lemma~\ref{lem: sum n inf}, we set $\cD_i=\cC^i_\infty+\cE^i_\infty$, 
then 
\beqn
 L\left(\gamma^i_n\right) -  (2n+1) \tau^*   + B_i 
= \cD_i \lambda^{-n} + \cO\left(\lambda^{-1.5n}\right).
\eeqn
Now we choose $j\in \{1, 2, 3, 4 \}$ such that $B_j>B_i$ for all $i\ne j$.
If it happens that $B_j=B_i$ for distinct $j, i$, we pick $j$ such that $D_j>D_i$.
By such choice, we have 
\beqn
\ML^{\max}_{\Omega} \left( \frac{2n}{2n+1} \right) =  L\left(\gamma^j_n\right)
\eeqn
for sufficiently large $n$. 
Then the proof of Theorem~\ref{thm: main3} is complete if 
we set $B_{\frac12}=B_j$ and $D_{\frac12}=B_j$.

\bibliographystyle{alpha}

\end{document}